\documentclass{article}
\usepackage[letterpaper]{geometry}
\usepackage[T1]{fontenc}
\usepackage{amsmath,amssymb,amsthm}
\usepackage{stmaryrd}
\SetSymbolFont{stmry}{bold}{U}{stmry}{m}{n}
\usepackage{graphicx}
\usepackage{algorithm}
\usepackage{physics}
\usepackage[numbers,square,sort&compress]{natbib}
\usepackage[hidelinks]{hyperref}
\usepackage{cleveref}
\usepackage{microtype}
\usepackage{algpseudocode}
\usepackage{booktabs} 
\usepackage{wrapfig}
\usepackage{tcolorbox}
\usepackage{tikz}
\usepackage{enumerate}
\usetikzlibrary{arrows.meta,graphs,shapes}

\newcommand{\E}{{\mathbb{E}}}

\newcommand{\NN}{\mathbb{N}}

\newcommand{\threval}{\mathsf{Three\text{-}valued}}

\newcommand{\ADASGD}[0]{\textsc{AdaSGD}}

\newcommand{\dist}[2]{\mathbf{dist}(#1,#2)}
\newcommand{\R}{\mathbb{R}}
\newcommand{\littleFunc}[0]{\rho}

\DeclareMathOperator*{\argmin}{arg\,min}

\newcommand{\simIID}{\stackrel{\mathrm{iid}}{\sim}}

\newcommand{\ball}[1]{\mathbb{B}(#1)}
\newcommand{\ind}[1]{^{(#1)}}
\newcommand{\independent}{\perp\!\!\!\perp}
\newcommand{\budget}{n}
\newcommand{\pr}{\mathbb{P}}
\newcommand{\BudgetLowerBoundPhi}{W}
\newcommand{\Fbudget}{m}
\newcommand{\kn}{K_\budget}

\newcommand{\xsgd}[1]{x^{\text{SGD}}_{#1}}
\newcommand{\xout}[1]{x^{\text{out}}_{#1}}
\newcommand{\xStageOne}[1]{x_{#1}^{\text{stage 1}}}
\newcommand{\xB}[0]{\xStageOne{\Fbudget}}
\newcommand{\gsgd}[0]{g_{t}}

\newcommand{\nonasymFunc}{\beta}
\newcommand{\dimFunc}{d}
\newcommand{\xt}{x^{(t)}}
\newcommand{\alg}{\text{A}}
\newcommand{\algcls}{\mathcal{A}}
\newcommand{\funcls}{\mathcal{F}}
\newcommand{\funclsncvx}[1][d]{\funcls^{p,\text{ncvx}}_{#1}}
\newcommand{\funclscvx}[1][d]{\funcls^{p,\text{cvx}}_{#1}}
\newcommand{\funclscvxhil}{\funcls^{p,\text{cvx}}_\infty}
\newcommand{\funclsncvxhil}{\funcls^{p,\text{ncvx}}_\infty}
\newcommand{\algclszr}{\algcls_{\text{zr}}}
\newcommand{\algclsrand}{\algcls_{\text{rand}}}
\newcommand{\xcom}[1]{\underline{x}_{#1}}
\newcommand{\ycom}[1]{\underline{y}_{#1}}
\newcommand{\funcom}[1]{f_{#1}}
\newcommand{\lipsfunc}[2]{\mathcal{C}^{#2}(#1)}
\newcommand{\probmeasure}{\mathcal{D}}
\newcommand{\hilbert}[1]{\mathcal{H}_{#1}}
\newcommand{\algorder}{\bar{p}}

\newcommand{\indicator}[1]{\mathbb{I}\{#1\}}

\newtheorem{theorem}{Theorem}
\newtheorem{lemma}{Lemma}

\newtheorem{corollary}{Corollary}

\newtheorem{assumption}{Assumption}
\newtheorem{definition}{Definition}
\newtheorem{example}{Example}
\newtheorem{remark}{Remark}
\newtheorem{fact}{Fact}
\crefname{fact}{fact}{facts}
\Crefname{fact}{Fact}{Facts}

\hypersetup{
  pdftitle={Asymptotic Lower Bounds for Continuous Optimization},
  pdfauthor={Oliver Hinder and Yuntian Jiang}
}
\title{Asymptotic Lower Bounds for Continuous Optimization}
\author{Oliver Hinder\thanks{Department of Industrial Engineering, University of Pittsburgh.
  Email: \texttt{ohinder@pitt.edu}.}
  \and Yuntian Jiang\thanks{Research Institute for Interdisciplinary Sciences, Shanghai University of Finance and Economics. This work was performed when Yuntian Jiang was a visiting student at Department of Industrial Engineering of University of Pittsburgh. Email: \texttt{yuntianjiang07@gmail.com}.}}
\date{\today}

\begin{document}
\maketitle
\begin{abstract}
Nonasymptotic convergence rates for optimization problems have been
extensively studied across a wide range of settings, with carefully
constructed worst-case instances establishing matching lower bounds for many
algorithm classes. Recent work has shown, however, that these rates can often
be improved in the asymptotic regime, while the corresponding asymptotic lower
bounds remain largely unknown.
We propose a general technique for converting existing nonasymptotic lower bound
constructions into asymptotic lower bounds on Hilbert spaces. This allows us to
show that several known asymptotic convergence upper bounds in Hilbert spaces
are tight. In particular, we recover tightness of the $o(n^{-2})$
suboptimality guarantee for accelerated gradient descent on smooth convex
functions \cite{attouch2016rate}, as well as the $o(n^{-1/2})$
suboptimality guarantee for gradient descent on smooth nonconvex functions
\cite{gratton2025refining}.
For stochastic convex optimization, we also develop a method that achieves an
$o(\budget^{-1/2})$ suboptimality guarantee in finite dimensions, and prove a
matching one-dimensional lower bound.
\end{abstract}
\medskip
\noindent\textbf{Keywords:} asymptotic convergence bound, lower bound.\par
\noindent\textbf{Mathematics Subject Classification:} 90C15, 90C25.

\section{Introduction}

In optimization, there are two popular ways to measure the performance of a method: asymptotic and nonasymptotic convergence bounds. 

Asymptotic convergence rates only apply when the number of iterations, $\budget$, grows arbitrarily large.
Given a function $s : \NN \rightarrow (0,\infty)$,
a method has an $O(s(\budget))$ asymptotic convergence rate, if for every problem instance, 
\[
\limsup_{\budget \rightarrow \infty} \frac{\epsilon_\budget}{s(\budget)} < \infty
\]
where $\epsilon_\budget \ge 0$ is the value of our desired performance measure after $\budget$ iterations (i.e., $\epsilon_{\budget}$ 
is typically the suboptimality in convex optimization and the gradient norm in nonconvex optimization). 
If the limit is zero then we say that the method has an $o(s(\budget))$ asymptotic convergence rate. 
Asymptotic convergence rates are popular in classical nonlinear optimization \citep{wright1999numerical,dennis1974characterization,Bertsekas2016Nonlinear} 
and statistics \citep{van2000asymptotic}.

On the other hand, nonasymptotic convergence bounds are stronger and apply for all possible inputs, not just in the limit as $\budget \rightarrow \infty$.
In particular, given a function $s : \NN \times \Theta \rightarrow (0,\infty)$, a method has an $O(s(\budget, \theta))$ nonasymptotic convergence bound if there exists a problem-independent constant $C \in (0,\infty)$ such that, for every iteration budget, $\budget$, and value of the problem parameters $\theta$,
\[
\frac{\epsilon_\budget}{s(\budget, \theta)} \le C.
\]
The nonasymptotic complexity of optimization is well-understood \citep{nemirovskii1983problem,carmon2020lower,arjevani2019oracle}.
For example, the minimax nonasymptotic complexity of stochastic convex optimization with Lipschitz continuous functions is known to be $O( L R / \sqrt{\budget} )$ for the expected suboptimality where $\budget$ is the number of stochastic gradient evaluations, $R$ is the distance to optimality and $L$ is the Lipschitz constant of sample functions.

The asymptotic and nonasymptotic complexities of optimization need not coincide.
For example, for smooth convex functions, accelerated gradient descent achieves
an $o(n^{-2})$ suboptimality guarantee \citep{attouch2016rate}, improving over
the classical nonasymptotic guarantee of $O(n^{-2})$. In some settings, the
difference can be even more pronounced. For instance, on stochastic piecewise
linear convex functions, asymptotic linear convergence rates are achievable,
whereas the best available nonasymptotic convergence rates are sublinear \citep{davis2024stochastic}.
This paper demonstrates, for several canonical settings for analyzing optimization methods,
that there is little to gain by switching from nonasymptotic to asymptotic analysis.

\paragraph{Our contributions} 
\begin{itemize}
\item We develop a general technique for converting nonasymptotic lower-bound
constructions into asymptotic lower bounds. The key idea is to combine a
sequence of finite-dimensional hard instances into a single hard problem, so that
the difficulty persists along an infinite sequence of iteration
budgets. We use this technique to prove tightness of known asymptotic upper bounds
for deterministic optimization. In particular, we show that the
$o(n^{-2})$ suboptimality guarantee for accelerated gradient descent on smooth
convex functions \citep{attouch2016rate} cannot be improved in general, and
that the $o(n^{-1/2})$ suboptimality guarantee for gradient descent on smooth
nonconvex functions \citep{gratton2025refining} is also tight.

\item For stochastic convex optimization, we develop a finite-dimensional method
that achieves an $o(\budget^{-1/2})$ expected suboptimality guarantee. We also
prove a matching one-dimensional lower bound by nesting an infinite sequence of
existing nonasymptotic lower bounds into one dimension, showing that this asymptotic rate
cannot be improved in general.
\end{itemize}

\paragraph{Paper outline}
The remainder of the introduction surveys the related literature, and introduces our notation and assumptions. 
\Cref{sec:general-lower-bounds} provides a general technique to convert nonasymptotic lower bounds to asymptotic lower bounds in Hilbert spaces.
\Cref{sec:upper-bounds} provides our parameter-free method with an improved asymptotic convergence rate for stochastic convex optimization and \Cref{sec.lower bound} provides a matching lower bound for stochastic convex optimization.
\Cref{sec:discussion} discusses the disadvantages of asymptotic convergence rates compared to nonasymptotic rates.

\subsection{Related literature}
\paragraph{Asymptotic convergence bounds for deterministic optimization}
Several papers study asymptotic convergence bounds in the deterministic setting.
In the H\"older continuous optimization setting, \citet{nesterov2025universal} showed that, with knowledge of the local geometric properties of the objective function, improved asymptotic convergence bounds are achievable. In particular, when the H\"older constant equals $0$, an asymptotic convergence bound of $o(\budget^{-1/2})$ can be obtained.
In follow-up work, \citet[Theorem~4.4]{he2025new} showed that similar results hold for Polyak’s stepsize~\citep{polyak1987introduction}. 
%We note that these methods rely on knowledge of the problem parameters or minimum function value, and are therefore not parameter-free.
For smooth minimization problems with a simple nonsmooth term, \citet{attouch2016rate} showed that the asymptotic convergence rate of Nesterov’s accelerated method~\citep{nesterov1983method} is in fact $o(\budget^{-2})$, improving upon the original result. Later this result was extended to linearly constrained convex optimization~\citep{he2026convergence} and convex optimization with capped-$\ell$1 penalty~\citep{tao2026accelerated}.
For nonconvex optimization, \citet{gratton2025refining} showed that gradient descent attains a complexity of $o(\budget^{-1/2})$ for finding stationary points, and their results were further extended to difference-of-convex optimization \citep{gratton2026iteration}.

\paragraph{Nonasymptotic worst-case lower bounds}
Nonasymptotic lower bounds for convex optimization can be traced back to the seminal work of \citet{nemirovskii1983problem}. These results were subsequently extended and refined for stochastic optimization \citep{agarwal2009information}. While there are papers showing nonasymptotic complexity bounds for various settings \citep{ramdas2012optimal,carmon2020lower,carmon2021lower,kornowski2022oracle,zhang2022lower,arjevani2023lower,carmon2024price,GUZMAN20151}, we are not aware of any asymptotic lower bounds in the optimization literature.

\paragraph{Nonasymptotic worst-case upper bounds for stochastic optimization}
Nonasymptotic upper bounds for stochastic optimization in the convex setting have been extensively studied and can be traced back to the seminal work of \citet{nemirovskii1983problem}. In the strongly convex setting, improved convergence bounds are available; see, for example, \citet{hazan2014beyond, rakhlin2011making}. The analyses for convex and strongly convex objectives can be further unified under more general algorithmic frameworks, as demonstrated by \citet{ghadimi2012optimal, ghadimi2013optimal}.

\paragraph{Asymptotic convergence bounds for stochastic optimization}
In the stochastic regime, a line of work investigated settings in which the convergence can be strengthened to almost sure convergence, e.g.,~\citep{sebbouh2021almost,liu2022almost,mertikopoulos2020almost,li2019convergence,zhou2017stochastic}. However, these methods cannot achieve faster convergence than $O(\budget^{-1/2})$. In another line of work more closely related to ours, \citet{polyak1992acceleration} studied the asymptotic convergence of stochastic optimization under smooth and strongly convex assumptions. \citet{duchi2016local} introduced a new notion of ``local minimax complexity'' to study the local geometry property of specific functions, thus allowing a faster asymptotic convergence bound under polynomial growth conditions~\citep[Equation (12)]{duchi2016local}. \citet{ramdas2012optimal} also obtained a faster rate under growth rate conditions and provided lower bounds matching their upper bounds.

\paragraph{Asymptotic characterization of stochastic optimization methods}
While, to the best of our knowledge, there are no papers focusing on the asymptotic rate of convergence of methods for nonsmooth convex stochastic optimization, there are several papers that characterize the asymptotic behavior of stochastic optimization methods under stronger assumptions \citep{davis2024asymptotic,duchi2016localcons,duchi2021asymptotic}.

\subsection{Notation and basic definitions}\label{sec:notation-and-assumptions}

For any $d\in\NN$, let $\hilbert{d}$ denote a $d$-dimensional real
Hilbert space, and let $\hilbert{\infty}$ denote an infinite-dimensional real Hilbert space. We write $\hilbert{}$ for a
generic real Hilbert space whenever the dimension or a particular block
decomposition is immaterial. Throughout the paper, $\langle\cdot,\cdot\rangle$ and $\|\cdot\|$
denote the inner product and induced norm of the ambient Hilbert space,
unless otherwise specified. Let $\R$ denote the reals. When a block decomposition is needed, we fix a sequence
$\{d_i\}_{i\ge 1}\subseteq\NN$ and take
\(
    \hilbert{}
    =
    \bigoplus_{i\ge 1}\hilbert{d_i}
    :=
    \{
        x=(\xcom{i})_{i\ge 1}:
        \xcom{i}\in\hilbert{d_i},\
        \sum_{i\ge 1}\|\xcom{i}\|^2<\infty
    \},
\)
equipped with the inner product defined as
\(
    \langle x,y\rangle
    :=
    \sum_{i\ge 1}\langle \xcom{i},\ycom{i}\rangle,
    x=(\xcom{i})_{i\ge 1},\ y=(\ycom{i})_{i\ge 1}.
\)
For any $p\in\NN$, let
$[p]:=\{1,\ldots,p\}$ and $[\infty]:=\NN$. For $\bar{d}\in \NN \cup \{\infty\}$, and $x\in \hilbert{\bar{d}}$, define
\(
    \operatorname{supp}\{x\}:=\{j\in [\bar{d}]:x_j\neq0\}.
\)
For a symmetric order-$r$ tensor $T$, $r\geq2$, define
\(
    \operatorname{supp}\{T\}
    :=\{j\in[\bar{d}]:T_{j,j_2,\ldots,j_r}\neq0
    \text{ for some }j_2,\ldots,j_r\in [\bar{d}]\}.
\)
For a set $S\subseteq\hilbert{}$ and a point $x\in\hilbert{}$, we define
\(
    \dist{x}{S}:=\inf_{y\in S}\|y-x\|
\)
if $S\neq\emptyset$, and set $\dist{x}{S}:=\infty$ if $S=\emptyset$. We
also use the shorthand $\dist{S}{x}:=\dist{x}{S}$. For a nonempty closed
convex set $X\subseteq\hilbert{}$, the projection of $x\in\hilbert{}$
onto $X$ is denoted by
\(
    \Pi_X(x):=\arg\min_{y\in X}\|y-x\|.
\)
For $x\in\hilbert{}$ and $R>0$, we denote the closed ball centered at
$x$ with radius $R$ by
\(
    \ball{x,R}:=\{u\in\hilbert{}:\|x-u\|\le R\},
\)
and use the shorthand $\ball{R}:=\ball{\mathbf{0},R}$. Throughout,
$\log$ denotes the natural logarithm. Given a function $f : D \rightarrow \R$,
we define $f^\star := \inf_{x \in D} f(x)$. 
When $f$ is convex and nondifferentiable, with 
abuse of notation, we let $\grad f(x)$ denote an arbitrary function with 
$\grad f(x) \in \partial f(x)$.
% \hinder{I added this sentence in to clean things up}

\begin{definition}
    \label{def:lipschitz-function-class}
    Let $d\in\NN\cup\{\infty\}$ and $p\in\{0\}\cup\NN$. With the convention
    $\grad^0 f=f$, we say that a function
    $f:\hilbert{d}\to\mathbb{R}$ has a $L_p$-Lipschitz-continuous
    $p$th-order derivative if $f$ is $p$ times continuously
    differentiable and
    \[
\|\grad^p f(x)-\grad^p f(y)\|\leq L_p\|x-y\|.
    \]
    For $p\geq1$, the norm of $\grad^p f$ is the operator norm on
    symmetric $p$-linear forms; for $p=0$, it is the absolute value.
    For $p=0$, the above condition simply means that $f$ is
    Lipschitz continuous. Throughout this paper, $\lipsfunc{\hilbert{d}}{p}$ denotes the class of such functions with $L_p=1$; in particular, $\lipsfunc{\hilbert{d}}{0}$ denotes the class of $1$-Lipschitz functions.
\end{definition}

\section{A general technique for asymptotic lower bounds in Hilbert spaces}\label{sec:general-lower-bounds}
In this section, we propose a general technique to convert nonasymptotic lower bound results into asymptotic lower bound results on Hilbert spaces~\cite{bauschke2017convex}. We also show that our technique can be applied to many well-known nonasymptotic lower bound results~\citep{carmon2020lower,arjevani2019oracle,woodworth2018graph}.

An algorithm $\alg$ maps an objective function $f: \hilbert{} \to \mathbb{R}$ to a sequence of iterates $\alg[f]$ in $\hilbert{}$, where $\hilbert{}$ is the domain of optimization variables. 
We study two classes of algorithms: zero-respecting algorithms (\Cref{def:zr-sequence-alg}) and 
randomized algorithms (\Cref{def:randomized-alg}).
These definitions closely follow \citet[Section~2.2]{carmon2020lower}.
Zero-respecting algorithms are a stylized class of algorithms that
only move coordinates that at some point exhibited a nonzero gradient. 
They cover the vast majority of existing algorithms and are easier 
to analyze than randomized algorithms.
This makes them useful as a warm-up to randomized algorithms.
A worst-case lower bound for a zero-respecting algorithm applies by a black box reduction to
deterministic methods; see \citet[Section 3.3]{carmon2020lower}.
The only distinguishing feature between
randomized and deterministic algorithms is that randomized algorithms may additionally use a seed $\xi\sim\mathsf{Unif}[0,1]$ drawn independently of $f$.
The notation $\grad^{(0,\ldots,\algorder)}f(x)$ denotes the response of an $\algorder$th-order oracle, consisting of the function value and all derivatives through order $\algorder$ at $x$. 

\begin{definition}[Zero-respecting sequences and algorithms]
    \label{def:zr-sequence-alg}
    Let $\algorder\in \NN$ and $f:\hilbert{}\to \mathbb{R}$ be differentiable through order $\algorder$. We say the sequence $x^{(1)},x^{(2)},\ldots$ is $\algorder$th order zero-respecting with respect to $f$ if
    \begin{equation}
        \label{eq:pth-zr}
        \operatorname{supp}\left\{\xt\right\} \subseteq \bigcup_{r \in[\algorder]} \bigcup_{s<t} \operatorname{supp}\left\{\grad^r f(x^{(s)})\right\} \quad \text { for each } t \in \NN .
    \end{equation}
    An algorithm $\alg$ is $\algorder$th order zero-respecting, i.e., $\alg\in \algclszr^{(\algorder)}$, if for every $f:\hilbert{}\to \mathbb{R}$ differentiable through order $\algorder$, the iterate sequence $\alg[f]$ is $\algorder$th order zero-respecting w.r.t. $f$.
\end{definition}

\begin{definition}[Randomized algorithms]
    \label{def:randomized-alg}
    Let $\algorder\in \NN$. We call $\alg$ a $\algorder$th-order randomized algorithm, i.e., $\alg\in \algclsrand^{(\algorder)}$, if it produces iterates of the following form:
    \begin{equation}
        \label{eq:randomized-alg-update}
        \xt = \alg^{(t)}\left(\xi, \grad^{(0,\ldots,\algorder)} f(x^{(1)}),\ldots, \grad^{(0,\ldots,\algorder)} f(x^{(t-1)})\right)  \text{ for } t\in \NN,
    \end{equation}
    where $\alg^{(t)}$ are measurable mappings into $\hilbert{}$ and $\xi \sim \mathsf{Unif}[0,1]$. 
\end{definition}

We consider both suboptimality gaps (i.e., for convex optimization) and gradient norms (i.e., for nonconvex optimization) to measure algorithmic performance.
To generically cover these two metrics (and other metrics researchers might be interested in), we provide the following unified definition.

\begin{definition}\label{def:error-measure}
Denote the error measure for function $f$ at point $x$ as $\epsilon(x,f)$ where $\epsilon(x,f) \ge 0$ for all $x$ in the domain of $f$. For algorithm $\alg$ and function $f$, denote the error metric with respect to budget $\budget$ as
\begin{equation}\label{eq.min-error-of-alg}
    \epsilon_\budget(\alg,f) = \epsilon(x^{(\budget+1)},f),
\end{equation}
where, for some $\algorder\in\NN$, $\{\xt\}_{t=1}^\infty = \alg[f]$ if $\alg\in\algclszr^{(\algorder)}$ and $\{\xt\}_{t=1}^\infty = \alg[\xi,f]$ if $\alg\in\algclsrand^{(\algorder)}$. Thus, $\alg[f]_\budget:=x^{(\budget+1)}$ denotes the output after $\budget$ oracle calls, with the randomness implicit for a randomized algorithm.
\end{definition}

% \hinder{there ought to be a definition of Lipschitz continuous}

We next introduce the two function classes used in this section. For function classes, a dimension subscript specifies the domain dimension; classes with the same symbol and superscripts have the same defining properties. For each $d\in\NN\cup\{\infty\}$, $\funclsncvx$ and $\funclscvx$ denote the nonconvex and convex classes. 
Recall from \Cref{def:lipschitz-function-class} that $\lipsfunc{\hilbert{d}}{p}$ is the class of functions with $p$th order $1$-Lipschitz derivatives. Our nonconvex class is a normalized version of that in \citet[Definition~1]{carmon2020lower} and the convex class additionally requires convexity and bounded distance from the minimizer to the origin. Scaling can normalize any function to satisfy these properties; thus, for simplicity we only consider these normalized function classes.

\begin{definition}[Normalized function classes]
    \label{def:fun-cls}
For $p\in\NN$, let $\funclsncvx$ and $\funclscvx$ denote the following two classes of functions, with $\funclscvx$ restricted to convex functions:
\[
\begin{aligned}
\funclsncvx &= \{f\in\lipsfunc{\hilbert{d}}{p}: -1\leq f^\star\leq f(\mathbf{0})\leq1,\ \|\grad^r f(\mathbf{0})\|\leq1\ \forall r\in[p]\},\\
\funclscvx &= \{f\in\funclsncvx: f\text{ is convex},\ \dist{\mathbf{0}}{\argmin f}\leq1\}.
\end{aligned}
\]

    % the definition is changed because the hard function in the convex case uses $|x|^{p+1}$, hence not infinitely differentiable. \Cref{example-cvx-class} also changed correspondingly
\end{definition}

Our idea is to place finite-dimensional hard instances in separate blocks of coordinates and sum them to form one function in $\hilbert{\infty}$. The following assumption gives the two properties needed for this construction: the sum remains in the same function class, and the error of the full function is at least a scaled error from each block. The next two examples verify these properties for the nonconvex and convex classes defined above.

\begin{assumption}
\label{assm:error-metric}
Let $\funcls_{d}$ for any $d \in \NN \cup \{ \infty \}$ denote 
a $d$-dimensional function class and let $\funcls_{} := \cup_{d=1}^\infty \funcls_{d}$ with error metric $\epsilon$.
Suppose there exists a positive integer $q$, positive real sequences $\{\lambda_i\}_{i=1}^\infty$ and $\{\mu_i\}_{i=1}^\infty$
such that for any sequence of positive integers
$\{ \dimFunc_i \}_{i=1}^{\infty}$ and of functions 
$\{ f_i \in \mathcal{F}_{\dimFunc_i} \}_{i=1}^{\infty}$ we have $F \in \mathcal{F}_{\infty}$ for
\[
F(x):=\sum_{i=1}^{\infty} \lambda_i \funcom{i}(\mu_i \xcom{i}), \quad \forall x \in \hilbert{\infty} \text{ where } x := (\xcom{i})_{i \ge 1},
\]
and, for all $i \in \NN$
\begin{equation}
\label{eq:error-metric}
\epsilon(x,F)
\ge
\lambda_i\mu_i^q\epsilon(\mu_i \xcom{i},\funcom{i}).
\end{equation}
\end{assumption}

\begin{example}\label{example-ncvx-class}
    For function class $\funclsncvx$ with error metric $\epsilon(x,f) =\|\grad f(x)\|$, choose $q=1$, $\lambda_i = 2^{-i}$, and $\mu_i=1$. By \Cref{lem:direct-sum-regularity}, $F$ is well defined and satisfies the differentiability, Lipschitz continuity, and derivative bounds at the origin as required by \Cref{def:fun-cls}. By the choice of $\lambda_i$, it is easy to verify that $-1 \le F^\star\le F(\mathbf{0}) \le 1$. Moreover, the block-wise structure gives
    \[
    \|\grad F(x)\|
    =\left(\sum_{j=1}^{\infty}2^{-2j}\|\grad \funcom{j}(\xcom{j})\|^2\right)^{1/2}
    \geq 2^{-i}\|\grad \funcom{i}(\xcom{i})\|.
    \]
    Thus \eqref{eq:error-metric} holds, and therefore \Cref{assm:error-metric} holds for $\funclsncvx$ with $\epsilon(x,f) = \|\grad f(x)\|$.
\end{example}

\begin{example}\label{example-cvx-class}
    For function class $\funclscvx$ and error metric $\epsilon(x,f) =f(x)-f^\star$, let $\xcom{i}^\star\in \argmin \funcom{i}$ with $\|\xcom{i}^\star\|\le 1$, choose $q=0$, $\lambda_i=2^{-\frac{p+3}{2}i}$, and $\mu_i=2^{i/2}$. By \Cref{lem:direct-sum-regularity}, $F$ is well defined and satisfies the differentiability, Lipschitz continuity, and derivative bounds at the origin in \Cref{def:fun-cls}. By construction, $F$ is convex and
\begin{center}
\(
    x^\star = [2^{-1/2}\xcom{1}^\star;2^{-1}\xcom{2}^\star;\ldots]
    \in \argmin F,\quad
    \|x^\star\| \leq \sqrt{\sum_{i=1}^\infty 2^{-i}} = 1, \quad -1\leq F(x^\star)\leq F(\mathbf{0})\leq 1.
\)
\end{center}
Using the block-wise structure, $F(x)-F^\star = \sum_{i=1}^{\infty} \lambda_i \funcom{i}(\mu_i \xcom{i}) - F^\star =  \sum_{i=1}^{\infty} \lambda_i \funcom{i}(\mu_i \xcom{i}) -\sum_{i=1}^{\infty} \lambda_i\funcom{i}^\star \geq \lambda_i(\funcom{i}(\mu_i \xcom{i}) -\funcom{i}^\star) = \lambda_i\epsilon(\mu_i\xcom{i},\funcom{i})$ for any $i$. Thus \eqref{eq:error-metric} holds with $q=0$. Therefore, \Cref{assm:error-metric} holds for $\funclscvx$ with $\epsilon(x,f) = f(x)-f^\star$.
\end{example}

\subsection{Asymptotic lower bounds for zero-respecting algorithms}\label{sec:asymptotic-lower-bounds-zero-resepecting}
This subsection converts a series of finite-dimensional nonasymptotic lower bounds for zero-respecting sequences into an asymptotic lower bound on a Hilbert space.
In particular, we take a sequence of 
hard functions for a fixed budget (i.e.,
the nonconvex lower bound of \citet[Theorem~1]{carmon2020lower} and the convex lower bound of \citet[Theorem~3]{arjevani2019oracle}) 
Using an infinite weighted sum with decaying weights, we construct a single hard function that forms the basis for our asymptotic lower bound.

\begin{assumption}[Nonasymptotic lower bound: zero-respecting algorithms]
    \label{assm:general nonasymptotic}
    Let $\funcls_{d}$ for any $d \in \NN \cup \{ \infty \}$ denote 
a $d$-dimensional function class, let $\funcls_{} := \cup_{d=1}^\infty \funcls_{d}$ with error metric $\epsilon$, and let $\algorder \in \NN$.
There exists a decreasing bijection $\nonasymFunc : [1,\infty) \rightarrow (0,1]$ such that for any budget $\budget \in \NN$, there exists a hard instance $f\in \funcls_{\dimFunc_\budget}$ such that
for any $\algorder$th order zero-respecting sequence, $\{\xt\}_{t=1}^\infty$, with $x^{(1)} = \mathbf{0}$, we have 
    \begin{equation}
        \label{eq:general nonasmyptotic}
        \epsilon(x^{(\budget+1)},f) \geq C(\funcls_{}) \nonasymFunc(\budget),
    \end{equation}
where $C(\funcls_{})>0$ is a problem-independent numeric constant.
\end{assumption}

\begin{example}[Nonconvex hard instances]\label{example-ncvx-class-nonasym}
    For function class $\funclsncvx$ and error metric $\epsilon(x,f) =\|\grad f(x)\|$, \citet[Theorem~1]{carmon2020lower} show that Assumption~\ref{assm:general nonasymptotic} holds with $\algorder=p$ and $\nonasymFunc(\budget) = \budget^{-\frac{p}{p+1}}$. The hard function constructed by \citet{carmon2020lower} is infinitely differentiable, and \citet[Equation~(10) and Lemma~1]{carmon2020lower} show that at the origin only the first chain term contributes to its derivatives. Thus, its derivatives through order $\algorder$ are bounded independently of the chain length, and a fixed rescaling makes their norms and $L_p$ at most $1$ while preserving the stated rate and the function-value bounds. Hence, the hard function belongs to $\funclsncvx$.
\end{example}

\begin{example}[Convex hard instances]\label{example-cvx-class-nonasym}
    For function class $\funclscvx$ and error metric $\epsilon(x,f) =f(x)-f^\star$, \citet[Theorem~3]{arjevani2019oracle} show that Assumption~\ref{assm:general nonasymptotic} holds with $\algorder = p$ and $\nonasymFunc(\budget) = \budget^{-\frac{3p+1}{2}}$. The hard function constructed by \citet[Section~6]{arjevani2019oracle} is $p$ times differentiable with a Lipschitz-continuous $p$th derivative. Setting $\mu_p=D=1$, the construction satisfies $L_p\leq1$, $\grad^r f(\mathbf{0})=0$ for $2\leq r\leq p$, and $\|\grad f(\mathbf{0})\|\leq1$; together with $f(\mathbf{0})=0$, $\dist{\mathbf{0}}{\argmin f}\leq1$, and convexity, this shows that the hard function belongs to $\funclscvx$. Moreover, the construction of \citet[Section~6]{arjevani2019oracle} is a $p$th-order zero-chain, hence the same instance is hard for every $p$th-order zero-respecting sequence.
\end{example}

Then we can convert the $C(\funcls_{})\nonasymFunc(\budget)$ nonasymptotic lower bound to the following $o(\nonasymFunc(\budget))$ asymptotic lower bound.

\begin{theorem}[Asymptotic conversion: zero-respecting algorithms]
\label{thm.lower-bound-fixed-function-convex}
Suppose Assumption~\ref{assm:error-metric} and Assumption~\ref{assm:general nonasymptotic} hold, and let $\littleFunc : [1,\infty) \rightarrow (0,1]$ be a decreasing bijection.
Then,
there exists a function $F\in \funcls_{\infty}$ and a sequence $\budget_1, \budget_2, \dots$ with $\lim_{i \rightarrow \infty} \budget_i = \infty$, such that
\[
\inf_{\alg \in \algclszr^{(\algorder)}}\epsilon_{\budget_i}(\alg,F) \geq \littleFunc(\budget_i)\nonasymFunc(\budget_i)  \quad  \forall i \in \NN.
\]
\end{theorem}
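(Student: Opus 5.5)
The plan is to build $F$ as the block sum from Assumption~\ref{assm:error-metric}, with a rapidly growing sequence of budgets $\budget_1<\budget_2<\cdots$. For each $i$, Assumption~\ref{assm:general nonasymptotic} supplies a hard instance $f_i\in\funcls_{\dimFunc_{\budget_i}}$ for budget $\budget_i$. We set $F(x)=\sum_{i}\lambda_i f_i(\mu_i\xcom{i})$, which lies in $\funcls_\infty$ by Assumption~\ref{assm:error-metric}. The constants $\lambda_i,\mu_i,q$ are fixed by that assumption, so the only freedom is the choice of $\budget_i$. We choose it inductively so that $C(\funcls)\lambda_i\mu_i^q\nonasymFunc(\budget_i)\ge\littleFunc(\budget_i)\nonasymFunc(\budget_i)$, i.e.\ $\littleFunc(\budget_i)\le C(\funcls)\lambda_i\mu_i^q$. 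Since $\littleFunc$ is a decreasing bijection onto $(0,1]$, we have $\littleFunc(\budget)\to0$, so such $\budget_i$ exist. We may additionally require $\budget_i>\budget_{i-1}$, which gives $\budget_i\to\infty$.

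The key step is to show that for any $\alg\in\algclszr^{(\algorder)}$, the block $\xcom{i}$ of the iterates, rescaled as $y^{(t)}:=\mu_i\xcom{i}^{(t)}$, is a $\algorder$th order zero-respecting sequence for $f_i$ starting at $\mathbf{0}$. We also need $x^{(1)}=\mathbf 0$; zero-respecting with $t=1$ forces $\operatorname{supp}\{x^{(1)}\}\subseteq\emptyset$, so this holds automatically. The block-separable structure gives $\grad^r F(x)$ as block diagonal. Its $i$th block is $\lambda_i\mu_i^r\grad^r f_i(\mu_i\xcom{i})$, and all cross-block entries vanish. Hence the coordinates of block $i$ in $\operatorname{supp}\{\grad^r F(x^{(s)})\}$ are exactly $\operatorname{supp}\{\grad^r f_i(y^{(s)})\}$, using $\lambda_i,\mu_i>0$. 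Restricting \eqref{eq:pth-zr} for $F$ to the coordinates of block $i$ then yields \eqref{eq:pth-zr} for $(y^{(t)})$ with respect to $f_i$. Applying \eqref{eq:general nonasmyptotic} with budget $\budget_i$ gives $\epsilon(y^{(\budget_i+1)},f_i)\ge C(\funcls)\nonasymFunc(\budget_i)$.

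Combining with \eqref{eq:error-metric} gives
\[
\epsilon_{\budget_i}(\alg,F)=\epsilon(x^{(\budget_i+1)},F)\ge\lambda_i\mu_i^q\,\epsilon(\mu_i\xcom{i}^{(\budget_i+1)},f_i)\ge C(\funcls)\lambda_i\mu_i^q\nonasymFunc(\budget_i)\ge\littleFunc(\budget_i)\nonasymFunc(\budget_i).
\]
This bound is uniform in $\alg$, so it passes to the infimum.

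The main obstacle is the block-restriction step. It requires care with the definition of $\operatorname{supp}$ for tensors in the direct sum, which means identifying global coordinate indices with block-local ones. It also needs the derivatives of the infinite sum to be computed termwise. I would justify termwise differentiation via the regularity lemma (\Cref{lem:direct-sum-regularity}) invoked in the examples. A minor point is that the hard instance may depend on $\budget_i$ only through its dimension; this is harmless because each block has its own dimension $\dimFunc_{\budget_i}$.
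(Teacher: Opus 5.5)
Your proposal is correct and follows essentially the same route as the paper: choose increasing budgets $\budget_i$ with $\littleFunc(\budget_i)\le C(\funcls_{})\lambda_i\mu_i^q$, place the budget-$\budget_i$ hard instance in block $i$, and chain the error-metric inequality, the restriction of zero-respecting sequences to block $i$, and the choice of $\budget_i$. Your explicit treatment of the block-restriction step and of $x^{(1)}=\mathbf{0}$ spells out what the paper handles by citing Observation~2 of Carmon et al. Note, however, that termwise differentiability of $F$ is a tacit hypothesis under Assumption~\ref{assm:error-metric}, and the paper uses it too. \Cref{lem:direct-sum-regularity} only supplies it for the two concrete classes.
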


\begin{proof}
Let $q$, $\{\lambda_i\}_{i=1}^\infty$ and $\{\mu_i\}_{i=1}^\infty$ be as per Assumption~\ref{assm:error-metric}
and define $\budget_0 := 0$ and, for $i \ge 1$,
\begin{equation}\label{eq:hilbert-budget-sequence}
\budget_i :=
\max\left\{
\left\lceil \littleFunc^{-1}\!\left(\min\{1, \lambda_i \mu_i^q C(\funcls_{}) \}\right)\right\rceil,\
\budget_{i-1}+1
\right\}.
\end{equation}
The $\budget_{i-1}+1$ term in \Cref{eq:hilbert-budget-sequence} implies $\budget_i \rightarrow \infty$, and since $\littleFunc$ is a decreasing bijection, \Cref{eq:hilbert-budget-sequence} also implies that
\begin{flalign}\label{eq:implied-rho-C-relationship}
\lambda_i \mu_i^{q} C(\funcls_{}) \ge \littleFunc(\budget_i).
\end{flalign}
By \Cref{assm:general nonasymptotic} for each $i$ there exists a positive integer $d_i$ and
function $f_i \in \funcls_{d_i}$ such that
\begin{equation}
    \label{eq:pick-nonasym-hardfunction-convex}
    \epsilon(z_{i}^{(\budget_i+1)},\funcom{i})\geq C(\funcls_{})\nonasymFunc(\budget_i),
\end{equation}
where $\{z_{i}^{(t)}\}_{t=1}^\infty$ is any $\algorder$th order zero-respecting sequence with respect to $\funcom{i}$ with $z_{i}^{(1)}=\mathbf{0}$.
Moreover, by Assumption~\ref{assm:error-metric} we have $F\in \funcls_{\infty}$ where
\[
F(x)=\sum_{i=1}^{\infty}\lambda_i\funcom{i}(\mu_i \xcom{i}): \funcom{i} \in \funcls_{d_i}, x = (\xcom{i})_{i\ge 1}.
\]
For any $i\in\NN$, consider the suboptimality of any $\alg\in \algclszr^{(\algorder)}$, let $\{\xt\}_{t=1}^\infty = \alg[F]$, we have
\begin{equation*}
        \epsilon_{\budget_i}(\alg, F) = \epsilon(x^{(\budget_i+1)},F)\geq_{(a)}\lambda_i\mu_i^q\epsilon(\mu_i\xcom{i}^{(\budget_i+1)},\funcom{i})\geq_{(b)}\lambda_i\mu_i^q C(\funcls_{})\nonasymFunc(\budget_i)\geq_{(c)}\littleFunc(\budget_i)\nonasymFunc(\budget_i),
\end{equation*}
where $(a)$ is from \eqref{eq:error-metric}, $(b)$ is from the fact that if $\{\xcom{i}^{(t)}\}_{t=1}^\infty$ is a $\algorder$th order zero-respecting sequence with respect to $\lambda_i\funcom{i}(\mu_i\cdot)$ then $\{\mu_i\xcom{i}^{(t)}\}_{t=1}^\infty$ is a $\algorder$th order zero-respecting sequence with respect to $\funcom{i}$ and \eqref{eq:pick-nonasym-hardfunction-convex}~(see the discussion as in \citep[Observation 2]{carmon2020lower}), and $(c)$ is from \eqref{eq:implied-rho-C-relationship}.
\end{proof}

\begin{corollary}[Nonconvex lower bound: zero-respecting algorithms]
    \label{coro:nonconvex-asym-lb}
    Let $\littleFunc : [1,\infty) \rightarrow (0,1]$ be a decreasing bijection.
    Then, for any $p\in \NN$ there exists a function $F\in \funclsncvxhil$ such that
    \[
    \limsup_{\budget \to \infty} \inf_{\alg\in {\algclszr^{(p)}}} \frac{\budget^{\frac{p}{p+1}}\|\grad F(\alg[F]_{\budget})\|}{\littleFunc(\budget)} \geq 1,
    \]
    where $\alg[F]_\budget=x^{(\budget+1)}$ denotes the output after $\budget$ oracle calls.
\end{corollary}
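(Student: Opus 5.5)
This is a direct specialization of \Cref{thm.lower-bound-fixed-function-convex}. We take $\funcls_d=\funclsncvx$ with error metric $\epsilon(x,f)=\|\grad f(x)\|$ and $\algorder=p$.

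First, we check the two hypotheses. \Cref{example-ncvx-class} verifies \Cref{assm:error-metric} with $q=1$, $\lambda_i=2^{-i}$, $\mu_i=1$. \Cref{example-ncvx-class-nonasym} verifies \Cref{assm:general nonasymptotic} with $\nonasymFunc(\budget)=\budget^{-p/(p+1)}$. This $\nonasymFunc$ is a decreasing bijection $[1,\infty)\to(0,1]$, so the theorem applies with the given decreasing bijection $\littleFunc$. It yields $F\in\funclsncvxhil$ and budgets $\budget_i\to\infty$ such that, for every $i$,
\[
\inf_{\alg\in\algclszr^{(p)}}\|\grad F(\alg[F]_{\budget_i})\| \ge \littleFunc(\budget_i)\,\budget_i^{-\frac{p}{p+1}} .
\]
Here we use $\epsilon_{\budget}(\alg,F)=\epsilon(x^{(\budget+1)},F)=\|\grad F(\alg[F]_\budget)\|$ from \Cref{def:error-measure}.

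Next, since $\littleFunc(\budget_i)>0$, we divide by $\littleFunc(\budget_i)\budget_i^{-p/(p+1)}$. The quantity $\budget^{p/(p+1)}/\littleFunc(\budget)$ is positive and does not depend on $\alg$, so it commutes with the infimum. This gives
\[
\inf_{\alg} \frac{\budget_i^{\frac{p}{p+1}}\|\grad F(\alg[F]_{\budget_i})\|}{\littleFunc(\budget_i)}\ge 1 \quad\text{for all } i .
\]
Because $\budget_i\to\infty$, the $\limsup$ over all $\budget$ is at least the $\limsup$ along this subsequence, which is at least $1$.

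The only point needing care is that the infimum sits inside the $\limsup$, and the theorem delivers exactly that form along the subsequence. No step should be a real obstacle. The substantive inputs, namely the membership $F\in\funclsncvxhil$ via \Cref{lem:direct-sum-regularity} and the rescaling of Carmon et al.'s hard instance into $\funclsncvx$, are already packaged in the cited examples.
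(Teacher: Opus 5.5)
Your proposal is correct and is exactly the paper's argument: the paper's proof simply cites \Cref{thm.lower-bound-fixed-function-convex} together with \Cref{example-ncvx-class} and \Cref{example-ncvx-class-nonasym}. Your explicit division by $\littleFunc(\budget_i)\budget_i^{-p/(p+1)}$, followed by passing to the $\limsup$ along the strictly increasing budgets $\budget_i$, just spells out the step the paper leaves implicit.
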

\begin{proof}
    This is a direct result of \Cref{thm.lower-bound-fixed-function-convex}, \Cref{example-ncvx-class} and \Cref{example-ncvx-class-nonasym}.   
\end{proof}

\begin{corollary}[Convex lower bound: zero-respecting algorithms]
    \label{coro:convex-asym-lb}
    Let $\littleFunc : [1,\infty) \rightarrow (0,1]$ be a decreasing bijection.
    Then, for any $p\in \NN$, there exists a function $F\in \funclscvxhil$ such that
    \[
    \limsup_{\budget \to \infty} \inf_{\alg\in \algclszr^{(p)}} \frac{\budget^{\frac{3p+1}{2}}\left(F(\alg[F]_{\budget})-F^\star\right)}{\littleFunc(\budget)} \geq 1,
    \]
    where $\alg[F]_\budget=x^{(\budget+1)}$ denotes the output after $\budget$ oracle calls.
\end{corollary}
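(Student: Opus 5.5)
This corollary follows the same pattern as \Cref{coro:nonconvex-asym-lb}: we specialize \Cref{thm.lower-bound-fixed-function-convex} to the convex class and then convert the statement along a subsequence into a $\limsup$ statement. First, \Cref{example-cvx-class} shows that \Cref{assm:error-metric} holds for $\funclscvx$ with $\epsilon(x,f)=f(x)-f^\star$. The constants are $q=0$, $\lambda_i=2^{-\frac{p+3}{2}i}$ and $\mu_i=2^{i/2}$. Second, \Cref{example-cvx-class-nonasym} shows that \Cref{assm:general nonasymptotic} holds with $\algorder=p$ and $\nonasymFunc(\budget)=\budget^{-\frac{3p+1}{2}}$. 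This $\nonasymFunc$ is a decreasing bijection $[1,\infty)\to(0,1]$, as the assumption requires. Both hypotheses of \Cref{thm.lower-bound-fixed-function-convex} therefore hold, with $\funcls_\infty=\funclscvxhil$.

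Applying the theorem with the given $\littleFunc$ yields $F\in\funclscvxhil$ and budgets $\budget_i\to\infty$ such that, for every $i$,
\[
\inf_{\alg\in\algclszr^{(p)}}\bigl(F(\alg[F]_{\budget_i})-F^\star\bigr)\ \ge\ \littleFunc(\budget_i)\,\budget_i^{-\frac{3p+1}{2}} .
\]
Here I use that $\epsilon_{\budget}(\alg,F)=F(x^{(\budget+1)})-F^\star=F(\alg[F]_\budget)-F^\star$ by \Cref{def:error-measure}. Multiplying by the positive quantity $\budget_i^{\frac{3p+1}{2}}/\littleFunc(\budget_i)$ shows that the ratio inside the $\limsup$, after taking the infimum over $\alg$, is at least $1$ at every $\budget_i$.

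Finally, since $\budget_i\to\infty$, the $\limsup$ over all $\budget$ is at least the $\limsup$ along the subsequence $\budget_i$, and that is at least $1$. The only point needing care is the order of the infimum and the multiplication. The factor $\budget^{\frac{3p+1}{2}}/\littleFunc(\budget)$ does not depend on $\alg$ and is positive, so it commutes with $\inf_\alg$. I expect no real obstacle beyond checking that the hypotheses in the two examples match the theorem exactly.
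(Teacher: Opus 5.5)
Your proposal is correct and follows the paper's proof: the paper also derives the corollary directly from \Cref{thm.lower-bound-fixed-function-convex}, combined with \Cref{example-cvx-class} and \Cref{example-cvx-class-nonasym}. Your extra remarks make explicit two steps the paper leaves implicit: moving the positive, $\alg$-independent factor through the infimum, and passing from the subsequence $\budget_i$ to the $\limsup$.
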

\begin{proof}
    This is a direct result of \Cref{thm.lower-bound-fixed-function-convex}, \Cref{example-cvx-class} and \Cref{example-cvx-class-nonasym}.
\end{proof}

\subsection{Asymptotic lower bounds for randomized algorithms}
\Cref{sec:asymptotic-lower-bounds-zero-resepecting} provides 
a lower bound for zero-respecting methods (which, by a black-box reduction,
implies lower bounds for deterministic methods; see \citet[Section 3.3]{carmon2020lower}).
This subsection gives the same result but for randomized methods. 
Overall our strategy is very similar: we construct a single hard distribution from a series 
of hard distributions, each targeting a particular iteration budget; the main difference is that
we now need to construct a hard \emph{distribution} over functions.
The need to construct a distribution over functions makes the proof a little more technically challenging than the proof in \Cref{sec:asymptotic-lower-bounds-zero-resepecting} but the ideas remain similar.

\begin{assumption}[Nonasymptotic lower bound: randomized algorithms]
    \label{assm:general nonasymptotic rand}
    Let $\funcls_{d}$ for any $d \in \NN \cup \{ \infty \}$ denote 
a $d$-dimensional function class, let $\funcls_{} := \cup_{d=1}^\infty \funcls_{d}$ with error metric $\epsilon$, and let $\algorder\in \NN$ such that the $\algorder$th-order oracle in \Cref{def:randomized-alg} is well defined on every $\funcls_d$. There exist a decreasing bijection $\nonasymFunc : [1,\infty) \rightarrow (0,1]$ and a constant $C(\funcls_{})>0$ such that for any budget $\budget\in\NN$ and $\delta\in (0,\frac{1}{2}]$, there exist $\dimFunc_{\budget,\delta}\in\NN$ and a probability measure $\probmeasure_{\budget,\delta}$ on functions in $\funcls_{\dimFunc_{\budget,\delta}}$ such that for any $\alg \in \algclsrand^{(\algorder)}$, we have
    \begin{equation}
        \label{eq:general nonasmyptotic rand}
        \pr_{\alg,f\sim \probmeasure_{\budget,\delta}}\left(\epsilon(x^{(\budget+1)},f) \geq C(\funcls_{}) \nonasymFunc(\budget)\right) \geq 1-2\delta,
    \end{equation}
where $\{\xt\}_{t=1}^\infty$ is the realization of $\alg[\xi,f]$, and the random seed $\xi$ is independent of $f$.
\end{assumption}

\begin{example}[Randomized nonconvex hard instances]
    \label{example-ncvx-class-nonasym-rand}
    For function class $\funclsncvx$ and error metric $\epsilon(x,f)=\|\grad f(x)\|$, \citet[Equation~(16)]{carmon2020lower} imply that Assumption~\ref{assm:general nonasymptotic rand} holds with $\algorder=p$ and $\nonasymFunc(\budget) = \budget^{-\frac{p}{p+1}}$. The randomized construction of \citet[Equation~(13) and Lemma~6]{carmon2020lower} is infinitely differentiable and has chain-length-independent bounds on its derivatives through order $p$ at the origin after the same fixed normalization as above. Hence, its distribution is supported on $\funclsncvx$ without changing the stated rate.
\end{example}

\begin{theorem}[Asymptotic conversion: randomized algorithms]
\label{thm.lower-bound-fixed-function-convex-rand}
Suppose \Cref{assm:error-metric} and \Cref{assm:general nonasymptotic rand} hold, and let $\littleFunc : [1,\infty) \rightarrow (0,1]$ be a decreasing bijection.
Then,
there exists a sequence $\budget_1, \budget_2, \dots$ with $\lim_{i \rightarrow \infty} \budget_i = \infty$, such that for any $\delta\in (0,\frac{1}{2}]$, there exists a probability measure $\probmeasure$ on functions in $\funcls_{\infty}$ such that for any $\alg\in \algclsrand^{(\algorder)}$, we have
\begin{equation}\label{eq:high-prob-rand-lb}
\pr_{\alg, F\sim \probmeasure}\left(\epsilon(x^{(\budget_i+1)},F)\geq \littleFunc(\budget_i)\nonasymFunc(\budget_i), \ \forall i\in \NN \right)\geq 1-2\delta,
\end{equation}
where $\{\xt\}_{t=1}^\infty$ is the realization of $\alg[\xi,F]$.
\end{theorem}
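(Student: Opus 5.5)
We mirror the proof of \Cref{thm.lower-bound-fixed-function-convex}, with two changes: the block instances are now drawn independently from hard distributions, and the per-block failure probabilities are made summable so that a union bound over all $i$ costs only $2\delta$ in total. Let $q,\{\lambda_i\},\{\mu_i\}$ be as in \Cref{assm:error-metric}. We define the budgets $\budget_i$ exactly as in \eqref{eq:hilbert-budget-sequence}. These do not depend on $\delta$, as the statement requires, and they give $\budget_i\to\infty$ and $\lambda_i\mu_i^q C(\funcls_{})\ge\littleFunc(\budget_i)$. Given $\delta\in(0,\tfrac12]$, we set $\delta_i:=\delta 2^{-i}\in(0,\tfrac12]$. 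For each $i$ we invoke \Cref{assm:general nonasymptotic rand} with budget $\budget_i$ and parameter $\delta_i$, obtaining $d_i:=\dimFunc_{\budget_i,\delta_i}$ and a measure $\probmeasure_i:=\probmeasure_{\budget_i,\delta_i}$ on $\funcls_{d_i}$. We then let $\probmeasure$ be the law of $F=\sum_i\lambda_i f_i(\mu_i\xcom{i})$, where the $f_i\sim\probmeasure_i$ are drawn independently (a product measure). By \Cref{assm:error-metric}, every realization satisfies $F\in\funcls_\infty$, so $\probmeasure$ is supported on $\funcls_\infty$. Measurability of the map $(f_i)_i\mapsto F$ is routine.

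The key step is a reduction showing that, for each fixed $i$, the $i$th block of the iterates produced by $\alg$ on $F$ is itself produced by a randomized algorithm $\alg_i\in\algclsrand^{(\algorder)}$ acting on $f_i$. We construct $\alg_i$ as follows. It draws its own seed $\xi'\sim\mathsf{Unif}[0,1]$ and uses a measurable bijection between $[0,1]$ and $[0,1]\times\prod_{j\neq i}\funcls_{d_j}$ (a Borel isomorphism, or a pushforward of Lebesgue measure) to generate both a seed $\xi$ for $\alg$ and independent samples $f_j\sim\probmeasure_j$ for $j\ne i$. It then simulates $\alg[\xi,F]$. At each query $x^{(t)}$, the oracle response of $F$ is assembled blockwise: the $j$th block of $\grad^r F(x^{(t)})$ equals $\lambda_j\mu_j^r\grad^r f_j(\mu_j\xcom{j}^{(t)})$, the cross-block derivatives vanish, and $F(x^{(t)})$ is the sum of the block values. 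All blocks $j\neq i$ are computable from $\alg_i$'s own samples. Block $i$ is obtained by having $\alg_i$ query its own oracle for $f_i$ at $\mu_i\xcom{i}^{(t)}$. Finally, $\alg_i$ outputs $z^{(t)}:=\mu_i\xcom{i}^{(t)}$.

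Because $f_i$ is independent of $(\xi,(f_j)_{j\ne i})$, the joint law of $(f_i,\{z^{(t)}\})$ under $\alg_i$ coincides with that of $(f_i,\{\mu_i\xcom{i}^{(t)}\})$ under $\alg$ with $F\sim\probmeasure$. Applying \eqref{eq:general nonasmyptotic rand} to $\alg_i$ then gives
\[
\pr\bigl(\epsilon(\mu_i\xcom{i}^{(\budget_i+1)},f_i)\ge C(\funcls_{})\nonasymFunc(\budget_i)\bigr)\ge1-2\delta_i .
\]
On this event, \eqref{eq:error-metric} together with the inequality $\lambda_i\mu_i^qC(\funcls_{})\ge\littleFunc(\budget_i)$ yields $\epsilon(x^{(\budget_i+1)},F)\ge\littleFunc(\budget_i)\nonasymFunc(\budget_i)$. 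A union bound over $i$ shows that the probability that some $i$ fails is at most $\sum_i2\delta_i=2\delta$, which gives \eqref{eq:high-prob-rand-lb}.

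The main obstacle is making the simulation argument rigorous. We must show that $\alg_i$ fits \Cref{def:randomized-alg}, with a single uniform seed and measurable maps $\alg_i^{(t)}$. We must also check that the oracle for $F$ is well defined and decomposes blockwise: the infinite sum of function values converges, and the derivative tensors are block-diagonal operators. This decomposition follows from the direct-sum regularity already used in the examples (\Cref{lem:direct-sum-regularity}). For the seed, I would first try to encode everything through a single uniform variable by splitting its binary digits into countably many independent uniforms and pushing these forward through the standard-Borel measures $\probmeasure_j$. This works provided each $\funcls_{d_j}$ is given a standard Borel structure, which I would assume implicitly, in line with \Cref{assm:general nonasymptotic rand}.
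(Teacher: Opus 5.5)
Your proposal is correct and follows the paper's proof: the same $\delta$-independent budgets $\budget_i$, per-block confidence levels $\delta_i=\delta 2^{-i}$, independent block instances $f_i\sim\probmeasure_i$, a per-block reduction of the $i$th block of the iterates to a randomized algorithm acting on $f_i$, and a union bound whose failure probabilities sum to $2\delta$. The only difference is in how the reduction handles the other blocks. You fold the sampling of $(f_j)_{j\neq i}$ into the seed of a single reduced algorithm, which needs a standard Borel structure on the $\funcls_{d_j}$. The paper instead fixes a realization $f_{-i}$, applies \Cref{assm:general nonasymptotic rand} to the conditioned algorithm $\underline{\alg}_{i,f_{-i}}$, which uses only the original seed $\xi$, and then averages over $f_{-i}$ by the law of total expectation.
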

\begin{proof}
Let $q$, $\{\lambda_i\}_{i=1}^\infty$ and $\{\mu_i\}_{i=1}^\infty$ be as per Assumption~\ref{assm:error-metric}. 
Define $\budget_i$ as in
\eqref{eq:hilbert-budget-sequence}; in particular, this sequence does not
depend on $\delta$. Fix $\delta\in(0,\frac12]$ and set
$\delta_i = \frac{\delta}{2^{i}}$. 
By \Cref{assm:general nonasymptotic rand} for each $i$ there exists a positive integer $d_i$ and
distribution $\probmeasure_i$ on $\mathcal{F}_{d_i}$ such that
for any $\widetilde{\alg}\in \algclsrand^{(\algorder)}$,
\begin{equation}\label{eq:randomized-f_i-is-hard}
    \pr_{\widetilde{\alg},\funcom{i}\sim \probmeasure_i}
    \left(
        \epsilon(z_i^{(\budget_i+1)},\funcom{i})
        <
        C(\funcls_{}) \nonasymFunc(\budget_i)
    \right)
    \leq 2\delta_i
\end{equation}
where
$\{z_i^{(t)}\}_{t=1}^\infty = \widetilde{\alg}[\xi,\funcom{i}]$.
Consider the following random function
$F\in \funcls_\infty$:
\[
F(x)=\sum_{i=1}^{\infty}\lambda_i\funcom{i}(\mu_i \xcom{i}),
\qquad
\funcom{i} \sim \probmeasure_i,
\qquad
x = (\xcom{i})_{i\ge 1},
\]
where the functions $\{\funcom{i}\}_{i\geq 1}$ are sampled
independently.
Denote the induced probability measure over $\funcls_\infty$ as
$\probmeasure$.
Fix $\alg\in\algclsrand^{(\algorder)}$. Let $\mathbb P$ denote the joint probability measure of the random
seed $\xi$ and the independent random functions
$\{\funcom{j}\}$. Under $\mathbb P$, the marginal distribution of $F$ is
$\probmeasure$, and $F$ is independent of the algorithm's
random seed $\xi$. Thus,
probabilities involving $F$ and the iterates of $\alg$ are the same under
$\mathbb P$ and $\pr_{\alg,F\sim\probmeasure}$. 

Next, fix $i$ and a realization of the other functions
$(\funcom{j})_{j\neq i}$, denoted by $\funcom{-i}$. 
Since $\xt$, $\funcom{-i}$ and $\grad^{(0,\ldots,\algorder)}\funcom{i}(\mu_i\xcom{i}^{(t)})$ can 
be used to determine $\grad^{(0,\ldots,\algorder)}F(\xt)$,
we can unroll $\alg^{(t)}$
to obtain a randomized algorithm $\underline{\alg}_{i,\funcom{-i}}^{(t)}$ such that
\begin{equation}\label{eq:block-equivalent-alg}
\mu_i \xcom{i}^{(t)} = \underline{\alg}_{i,\funcom{-i}}^{(t)}\left(\xi, \grad^{(0,\ldots,\algorder)} \funcom{i}(\mu_i \xcom{i}^{(1)}),\ldots, \grad^{(0,\ldots,\algorder)} \funcom{i}(\mu_i \xcom{i}^{(t-1)})\right)
\text{ for } t\in \NN .
\end{equation}
To analyze this sequence $\{\mu_i \xcom{i}^{(t)}\}_{t=1}^{\infty}$ we introduce two terms:
$h_i(\xi, \funcom{i}, \funcom{-i}) := \mathbf{1}(\epsilon(\mu_i\xcom{i}^{(\budget_i+1)},\funcom{i}) < C(\funcls_{})\nonasymFunc(\budget_i))$
which is the indicator that the error falls below the threshold $C(\funcls_{})\nonasymFunc(\budget_i)$,
and the associated probability $p_i(\funcom{-i})
:=
\pr_{\xi \sim \mathsf{Unif}([0,1]),\,\funcom{i}\sim\probmeasure_i}( h_i(\xi, \funcom{i}, \funcom{-i}) = 1 )$. 
Then, by $(a)$ the law of total expectation, $(b)$ mutual independence of $\xi$,
$\funcom{i}$ and $\funcom{-i}$, and $(c)$ 
\Cref{eq:randomized-f_i-is-hard} applied to
$\widetilde{\alg}=\underline{\alg}_{i,\funcom{-i}}$ with
$z_i^{(t)}=\mu_i\xcom{i}^{(t)}$ we get
\[
\mathbb P
(
    h_i(\xi, \funcom{i}, \funcom{-i}) = 1
) = \E\left[ h_i(\xi, \funcom{i}, \funcom{-i}) \right] \underset{(a)}{=}
\E\left[
\E\left[ h_i(\xi, \funcom{i}, \funcom{-i}) \mid \funcom{-i} \right] \right]
\underset{(b)}{=}
\E\left[ p_i(\funcom{-i}) \right]
\underset{(c)}{\le}
2\delta_i.
\]
Applying (a) \Cref{eq:error-metric}, 
(b) \Cref{eq:implied-rho-C-relationship},
(c) a union bound, and
(d) the previous inequality:
\[
\begin{aligned}
&
\mathbb P
(
    \forall i\in\NN:\ 
    \epsilon(x^{(\budget_i+1)},F)
    \geq
    \littleFunc(\budget_i)\nonasymFunc(\budget_i)
)
\\
&\underset{(a)}{\geq}
\mathbb P
(
    \forall i\in\NN:\ 
    \lambda_i\mu_i^q
    \epsilon(\mu_i\xcom{i}^{(\budget_i+1)},\funcom{i})
    \geq
    \littleFunc(\budget_i)\nonasymFunc(\budget_i)
) \underset{(b)}{\ge} \mathbb P
( \forall i\in\NN:\
    \epsilon
    (
        \mu_i\xcom{i}^{(\budget_i+1)},
        \funcom{i}
    )
    \ge
    C(\funcls_{})\nonasymFunc(\budget_i)
)
\\
&\underset{(c)}{\geq}
1-
\sum_{i=1}^{\infty}
\mathbb P
(
    \epsilon
    (
        \mu_i\xcom{i}^{(\budget_i+1)},
        \funcom{i}
    )
    <
    C(\funcls_{})\nonasymFunc(\budget_i)
)
\underset{(d)}{\geq} 1 - 2 \sum_{i=1}^{\infty} \delta_i = 1 - \delta \sum_{i=1}^{\infty} 2^{1-i}
=1-2\delta.
\end{aligned}
\]
\end{proof}

\begin{corollary}[Nonconvex lower bound: randomized algorithms]
    \label{coro:informal-randomized}
    Let $\littleFunc : [1,\infty) \rightarrow (0,1]$ be a decreasing bijection. Then, for any $\delta\in (0,\frac{1}{2}]$ and $p\in \NN$, there exists a probability measure $\probmeasure_\delta$ on the functions in $\funclsncvxhil$ such that for any $\alg \in \algclsrand^{(p)}$
    \begin{equation}
        \label{eq:informal-randomized}
        \pr_{\alg,F\sim \probmeasure_\delta}\left(\limsup_{\budget \to \infty} \frac{\budget^{\frac{p}{p+1}}\|\grad F(\alg[F]_{\budget})\|}{\littleFunc(\budget)} \geq 1 \right)\geq 1-2\delta.
    \end{equation}
\end{corollary}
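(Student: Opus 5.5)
The plan is to apply \Cref{thm.lower-bound-fixed-function-convex-rand} to the class $\funclsncvx$ with error metric $\epsilon(x,f)=\|\grad f(x)\|$. The two hypotheses of that theorem are already available. \Cref{assm:error-metric} holds by \Cref{example-ncvx-class}, with $q=1$, $\lambda_i=2^{-i}$ and $\mu_i=1$. \Cref{assm:general nonasymptotic rand} holds by \Cref{example-ncvx-class-nonasym-rand}, with $\algorder=p$ and $\nonasymFunc(\budget)=\budget^{-\frac{p}{p+1}}$. Given $\littleFunc$ and $\delta\in(0,\frac12]$, the theorem therefore supplies a sequence $\budget_i\to\infty$ that does not depend on $\delta$, together with a measure $\probmeasure_\delta:=\probmeasure$ on $\funclsncvxhil$. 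For every $\alg\in\algclsrand^{(p)}$, with probability at least $1-2\delta$ we then have $\|\grad F(x^{(\budget_i+1)})\|\ge \littleFunc(\budget_i)\budget_i^{-\frac{p}{p+1}}$ simultaneously for all $i\in\NN$.

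Next I pass from this event to the limsup statement. Recall that $\alg[F]_{\budget}=x^{(\budget+1)}$. On the event above, for every $i$,
\[
\frac{\budget_i^{\frac{p}{p+1}}\|\grad F(\alg[F]_{\budget_i})\|}{\littleFunc(\budget_i)}\ge 1.
\]
Because $\budget_i\to\infty$, this infinite subsequence forces the $\limsup$ over $\budget$ to be at least $1$. The limsup event therefore contains the event from \Cref{thm.lower-bound-fixed-function-convex-rand}, and so it has probability at least $1-2\delta$, which is \eqref{eq:informal-randomized}.

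The one point needing care is measurability: the limsup event must be an event under $\pr_{\alg,F\sim\probmeasure_\delta}$. In the worst case we can avoid the issue entirely by reading the probability as an inner probability, or by noting that it contains a measurable event of probability at least $1-2\delta$. Apart from this, the corollary is a direct specialization of the theorem and I expect no real obstacle. The substantive work, namely the chain-length-independent normalization that places the hard distributions in $\funclsncvx$, is already packaged in \Cref{example-ncvx-class-nonasym-rand}.
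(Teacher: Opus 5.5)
Your proposal is correct and matches the paper's proof, which derives the corollary directly from \Cref{thm.lower-bound-fixed-function-convex-rand} together with \Cref{example-ncvx-class} and \Cref{example-ncvx-class-nonasym-rand}. The paper leaves implicit the step you spell out: because the bound holds simultaneously along the sequence $\budget_i\to\infty$, the $\limsup$ must be at least $1$; your remark on measurability is a harmless extra precaution that the paper does not raise.
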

\begin{proof}
    This is a direct result of \Cref{thm.lower-bound-fixed-function-convex-rand}, \Cref{example-ncvx-class} and \Cref{example-ncvx-class-nonasym-rand}.
\end{proof}
Our general technique can also be applied to the convex lower bounds for randomized first-order methods of \citet[Theorem~1]{woodworth2018graph}, with a minor modification to their theorem statement and proof. We provide this proof in \Cref{app:convex-fixed-rand}.
\begin{corollary}[Convex lower bounds: randomized first-order algorithms]
\label{coro:convex-fixed-rand}
Let
\[
\mathcal{C}^{\text{cvx,bd}}_\infty
:=
\left\{
f:\hilbert{\infty}\to\mathbb{R}
:
f \text{ is convex},\;
\dist{\mathbf{0}}{\argmin f}\le 1,\;
-1\le f^\star\le f(\mathbf{0})\le 1
\right\}
\]
denote the class of convex functions whose minimizer sets have distance at most one from the origin and whose optimal value and objective value at the origin are bounded.
Let $\littleFunc : [1,\infty)\to (0,1]$ be a decreasing bijection. Then, for
any $\delta\in(0,\frac12]$, the following two statements hold.
\begin{enumerate}[i.]
    \item \textbf{Nonsmooth case.}
    There exists a probability measure
    $\probmeasure_{\delta}^{\rm ns}$ on the set of nonsmooth convex functions, $\mathcal{F}^{\rm ns}_\infty:=\mathcal{C}^{\text{cvx,bd}}_\infty\cap \lipsfunc{\hilbert{\infty}}{0}$,
    such that, for any randomized
    first-order algorithm $\alg\in\algclsrand^{(1)}$,
    \[
    \pr_{\alg,\,F\sim\probmeasure_{\delta}^{\rm ns}}
    \left(
    \limsup_{\budget\to\infty}
    \frac{\sqrt{\budget}\left(F(\alg[F]_{\budget})-F^\star\right)}
    {\littleFunc(\budget)}
    \ge 1
    \right)
    \ge 1-2\delta .
    \]

    \item \textbf{Smooth case.}
    There exists a probability measure
    $\probmeasure_{\delta}^{\rm sm}$ on the set of smooth convex functions,
    $\mathcal{F}^{\rm sm}_\infty:=\mathcal{C}^{\text{cvx,bd}}_\infty \cap \lipsfunc{\hilbert{\infty}}{1}$, such that, for any randomized first-order
    algorithm $\alg\in\algclsrand^{(1)}$,
    \[
    \pr_{\alg,\,F\sim\probmeasure_{\delta}^{\rm sm}}
    \left(
    \limsup_{\budget\to\infty}
    \frac{\budget^2\left(F(\alg[F]_{\budget})-F^\star\right)}
    {\littleFunc(\budget)}
    \ge 1
    \right)
    \ge 1-2\delta .
    \]
\end{enumerate}
\end{corollary}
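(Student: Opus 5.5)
The plan is to apply \Cref{thm.lower-bound-fixed-function-convex-rand} once for each case, with $\algorder=1$, and then derive the $\limsup$ statement exactly as in \Cref{coro:informal-randomized}. Two things must be checked first. We need \Cref{assm:error-metric} for the classes $\mathcal{F}^{\rm ns}$ and $\mathcal{F}^{\rm sm}$ with error metric $\epsilon(x,f)=f(x)-f^\star$. We also need \Cref{assm:general nonasymptotic rand} with $\nonasymFunc(\budget)=\budget^{-1/2}$ in the nonsmooth case and $\nonasymFunc(\budget)=\budget^{-2}$ in the smooth case, taken from \citet[Theorem~1]{woodworth2018graph}.

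For \Cref{assm:error-metric} I would reuse the argument of \Cref{example-cvx-class}. Take $q=0$ and $\mu_i=2^{i/2}$, so that $x^\star=(\mu_i^{-1}\xcom{i}^\star)_i$ has norm at most one. In the smooth case take $\lambda_i=2^{-2i}$: the block $\lambda_i f_i(\mu_i\cdot)$ is then $\lambda_i\mu_i^2=2^{-i}$-smooth, so $F$ is $1$-smooth. In the nonsmooth case take $\lambda_i=2^{-i}$: the block is $\lambda_i\mu_i=2^{-i/2}$-Lipschitz, and $\|\grad F\|^2\le\sum_i 2^{-i}=1$, so $F$ is $1$-Lipschitz. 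In both cases $\sum_i\lambda_i\le1$ gives $-1\le F^\star\le F(\mathbf 0)\le1$, and $F$ is convex. Well-definedness on $\hilbert{\infty}$ follows from $|f_i(y)-f_i(\mathbf 0)|\le\|y\|$ (Lipschitz case) or from the quadratic upper bound together with $\|\grad f_i(\mathbf 0)\|$ bounded (smooth case). For the smooth case we need some bound on $\|\grad f_i(\mathbf 0)\|$. Since $f_i$ is $1$-smooth and $\dist{\mathbf 0}{\argmin f_i}\le1$, we get $\|\grad f_i(\mathbf 0)\|\le1$, so this holds automatically. The separable inequality $F(x)-F^\star\ge\lambda_i(f_i(\mu_i\xcom{i})-f_i^\star)$ holds verbatim.

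The main obstacle is \Cref{assm:general nonasymptotic rand}. \citet[Theorem~1]{woodworth2018graph} is stated as a bound in expectation, or with constant probability, for randomized first-order algorithms. We need, for every $\budget$ and every $\delta\in(0,\frac12]$, a distribution $\probmeasure_{\budget,\delta}$ on normalized instances under which any randomized algorithm has suboptimality at least $C\nonasymFunc(\budget)$ with probability $1-2\delta$, and the constant $C$ must not depend on $\delta$. This is the ``minor modification'' mentioned in the text. I would redo their proof keeping the failure probability explicit. Their construction draws random orthonormal directions $v_1,\dots,v_T$ in a sufficiently high dimension, and uses a concentration argument to show that with high probability each query reveals at most one new direction. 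Their bound on this failure event scales like $T\exp(-c\,d/T^{\cdot})$ or similar, so enlarging the dimension $d_{\budget,\delta}$ (polynomially in $\budget$ and $\log(1/\delta)$) drives it below $2\delta$. On the good event, the algorithm's output is nearly orthogonal to the undiscovered directions, and the usual deterministic estimate gives gap at least $c/\sqrt{\budget}$ (for the max-of-linear function) or $c/\budget^2$ (for the smoothed version). Finally, rescaling as in \Cref{example-cvx-class-nonasym} places the instances in $\mathcal{C}^{\text{cvx,bd}}\cap\lipsfunc{}{0}$ or $\cap\lipsfunc{}{1}$ with distance to the minimizer at most one. I expect the technical work to sit in tracking this probability and the normalization, especially making sure the minimizer stays within distance one.

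With both assumptions in place, \Cref{thm.lower-bound-fixed-function-convex-rand} produces budgets $\budget_i\to\infty$ and a measure $\probmeasure_\delta$ such that, with probability at least $1-2\delta$, the inequality $F(x^{(\budget_i+1)})-F^\star\ge\littleFunc(\budget_i)\nonasymFunc(\budget_i)$ holds for all $i$. On this event the ratio in each statement is at least $1$ along the sequence $\budget_i$, so its $\limsup$ is at least $1$. This yields both parts i.\ and ii.
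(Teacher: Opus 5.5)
Your verification of \Cref{assm:error-metric} and the final application of \Cref{thm.lower-bound-fixed-function-convex-rand} are fine. Your choice $\lambda_i=2^{-i}$ in the nonsmooth case works as well as the paper's $2^{-2i}$. You also correctly see that the dimension must grow with $\log(1/\delta)$ to reach probability $1-2\delta$ with a $\delta$-free constant.

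The gap is in \Cref{assm:general nonasymptotic rand}. You treat the passage from \citet{woodworth2018graph} to $\mathcal{F}^{\rm ns}_d,\mathcal{F}^{\rm sm}_d$ as rescaling plus bookkeeping. But their construction is for a bounded domain, whereas here $f$ is defined on all of $\hilbert{d}$ and the algorithm may query anywhere. Two things break:
\begin{enumerate}[i.]
\item The max-of-linear piece $g(y)=\max_r(\ell v_r^T y-5\ell^2(r-1)/\eta)$ and its Moreau envelope are unbounded below on $\hilbert{d}$; take $y=-M\sum_r v_r$ with $M\to\infty$. So $f^\star=-\infty$, there is no minimizer, and no rescaling can place the instance in $\mathcal{C}^{\rm cvx,bd}_d$.
\item The vector-guessing step only controls $|\langle x_t,v_j\rangle|$ for queries of bounded norm. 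A query of very large norm can exceed the threshold $\ell/\eta$ against undiscovered directions. The active piece there can then be a far-ahead index, and the ``one new direction per query'' property fails.
\end{enumerate}

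The missing idea, which the paper takes from \citet{diakonikolas2020lower}, is a norm branch added before smoothing: $\tilde f(y)=\max\{g(y),\ \|y\|-1-\ell/\sqrt{\budget+1}\}$, with $f$ its $\eta$-Moreau envelope. This forces $\argmin f\subseteq\ball{1}$ and $-1\le f^\star\le f(\mathbf 0)\le 0$.

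For $\|x\|>3$ the norm branch is the only active one at the proximal point. So the oracle returns $\nabla f(x)=x/\|x\|$ and a function value independent of $V$, and the gap there is already at least $\ell/(2\sqrt{\budget+1})$ (items i and iv of \Cref{lem:hard-instance-unbounded-queries}).

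Two further steps are then needed. First, recheck the chain property with the extra branch through the prox optimality condition (item iii). Second, apply the vector-guessing lemma to surrogates rather than raw iterates: $z_t=x_t/3$ if $\|x_t\|\le3$ and $z_t=\mathbf 0$ otherwise. These have norm at most one and still satisfy the decomposition property, because large queries reveal nothing about $V$. Without this modification, or another device that neutralizes unbounded queries while keeping a minimizer in the unit ball, your verification of \Cref{assm:general nonasymptotic rand} does not go through.
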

% \hinder{we really ought to have a corollary here with the convex case}

% \hinder{If you need to you can reprove the results of \citep[Theorems~1 and~2]{woodworth2018graph} 
% in the appendix in the format that we need. If you give 
% ChatGPT the tex of the paper it can probably be very helpful
% to figure out an initial draft. You should 
% copy their proof as much as possible (with minor edits to improve 
% readability) and 
% clearly highlight the changes.}

\section{A method that achieves an \texorpdfstring{$o(\budget^{-1/2})$}{o(n\string^(-1/2))} asymptotic convergence rate for stochastic convex optimization}\label{sec:upper-bounds}
In this section, we provide upper bounds on the asymptotic convergence of stochastic Lipschitz convex optimization problems.
Our method does not require any tuning parameter or knowledge of problem parameters (i.e., it is parameter-free).
We provide a matching lower bound in \Cref{sec.lower bound}.

\paragraph{Stochastic optimization (SO) problems} A SO problem instance is a tuple  $(f, P)$ containing a distribution $P$ over sample space $\mathcal{S}$ and a sample objective $f: \hilbert{d} \times \mathcal{S} \rightarrow \mathbb{R}$, $d\in \NN$. We write
\[
F_{f, P}(x):=\mathbb{E}_{S \sim P} f(x ; S), \quad F_{f, P}^\star := \min_{x \in \hilbert{d}} F_{f, P}(x)  \quad \text{and} \quad X_{f, P}^{\star}:=\underset{x \in \hilbert{d}}{\arg \min}~ F_{f, P}(x)
\]
for the population objective (which we wish to minimize) and its set of minimizers, respectively. 
We omit the subscript $f,P$ when it does not cause confusion.

We consider the fundamental class of SO instances with a minimizer at most $R$ away from the origin\footnote{This implicitly, and without loss of generality, assumes that optimization methods are initialized at $x_1=\mathbf{0}$.} and bounded stochastic gradients.

\begin{assumption}
\label{assume-nonsmooth-convex-optimization}
There exists some $R > 0$ such that $\dist{\mathbf{0}}{X_{f, P}^{\star}} \leq R$ almost surely.
Also, there exists some $L > 0$ such that $f(\cdot;s)$ is $L$-Lipschitz and convex for all $s \in \mathcal{S}$.
\end{assumption}
In particular, we show that its output $\xout{\budget}$  after $\budget$ stochastic gradient evaluations satisfies 
\begin{equation*}
    \limsup_{\budget\to \infty} \budget^{1/2}\mathbb{E}\left[F(\xout{\budget})-F^\star\right]=0.
\end{equation*}
Thus our algorithm has an asymptotic error bound of $o(\budget^{-1/2})$. Our method has two distinct stages. The first stage estimates the initial distance to optimality. In the second stage, \Cref{alg:nested-adaptive} uses the adaptive gradient method to optimize the function over a sequence of shrinking balls: each time the ball shrinks and the minimizer is inside the new ball, the algorithm improves its worst-case convergence bound due to the smaller search space.

Critical to proving our result is \Cref{lem:monotone-error-bound}, which shows that all convex optimization problems in finite-dimensional Hilbert spaces with continuous objectives satisfy an error bound condition.
\Cref{lem:monotone-error-bound} allows us to argue that our method gets closer and closer to the optimal solution, enabling a faster decay of the suboptimality gap.
\Cref{lem:monotone-error-bound} is similar to the \L{}ojasiewicz inequality~\citep{lojasiewicz1963propriete}. However, the \L{}ojasiewicz inequality applies to real analytic functions, whereas \Cref{lem:monotone-error-bound} applies to continuous convex functions.
The proof of \Cref{lem:monotone-error-bound} uses elementary convex analysis techniques (\Cref{lem:monotone-error-bound} is not
particularly novel; for example, see \citet{nesterov2025universal} for a similar result).
Similar to the \L{}ojasiewicz inequality, the proof depends on the compactness of $\mathcal{X}$, and it is straightforward
to show this result fails for infinite dimensional spaces even when $\mathcal{X}$ is closed and 
bounded\footnote{Let $\mathcal{X}:=\{x\in \hilbert{\infty} : \|x\| \le 1\}$ and $\hat F(x):=\sum_{i=1}^\infty \frac{1}{i}\,[x]_i^2$ 
where $[x]_i$ denotes the $i$th component of $x$. Clearly, $\hat F$ is continuous and convex on $\mathcal{X}$. 
Moreover, $H^\star=\{0\}$ and $\hat F^\star=0$.
Let $e_n$ be a standard basis vector, then $\dist{e_n}{H^\star}=\|e_n\|_2=1$ while 
$\hat F(e_n)-\hat F^\star=\hat F(e_n)=\frac{1}{n}\to 0$.
If Lemma~\ref{lem:monotone-error-bound} held then $\phi(1) \le \lim_{n \rightarrow \infty}\hat{F}(e_n) - \hat{F}^\star = 0$ 
but also $\phi(0)=0$ which contradicts $\phi$ being a strictly increasing function.
}.

\begin{lemma}\label{lem:monotone-error-bound}
Suppose that $\hat{F} : \mathcal{X} \rightarrow \R$ is a continuous convex function and $\mathcal{X}\subseteq \hilbert{d}$ is a compact convex set. 
Let $D = \max_{x \in \mathcal{X}} \dist{x}{H^\star}$.
Then, there exists a strictly increasing function $\phi : [0,D] \rightarrow [0,\infty)$ such that for all $x \in \mathcal{X}$,
\[
\phi(\dist{x}{H^\star}) \le \hat{F}(x) - \hat{F}^\star \text{ where } H^\star := \argmin_{x \in \mathcal{X}} \hat{F}(x) \text{ and } \hat{F}^\star := \min_{u \in \mathcal{X}} \hat{F}(u).
\]
\end{lemma}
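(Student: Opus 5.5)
The plan is to take the lower envelope of the suboptimality gap at each distance level and then tighten it into a strictly increasing minorant. First I record the basic facts. $\hat F$ is continuous and $\mathcal{X}$ is compact, so $\hat F^\star$ is attained and $H^\star$ is nonempty. Convexity of $\hat F$ and $\mathcal{X}$ makes $H^\star$ convex, and continuity makes it closed, hence compact. The map $x\mapsto\dist{x}{H^\star}$ is $1$-Lipschitz, so $D$ is attained and finite. If $D=0$, the statement is trivial: any strictly increasing function on $\{0\}$ with $\phi(0)=0$ works. So I assume $D>0$.

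For $r\in[0,D]$ I define
\[
\psi(r):=\min\{\hat F(x)-\hat F^\star : x\in\mathcal{X},\ \dist{x}{H^\star}\ge r\}.
\]
The feasible set is closed in a compact set, hence compact, and it is nonempty because $D$ is attained. So the minimum exists. The feasible sets shrink as $r$ grows, so $\psi$ is nondecreasing, and $\psi(0)=0$. The key claim is that $\psi(r)>0$ for every $r>0$. Suppose instead that $\psi(r)=0$. Then some $x$ with $\dist{x}{H^\star}\ge r>0$ satisfies $\hat F(x)=\hat F^\star$, so $x\in H^\star$, which is a contradiction. Compactness in finite dimensions is exactly what makes the minimum attained here, and the footnote example shows that this step fails without it.

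Next I make the function strictly increasing. I will set $\phi(r):=\tfrac{r}{D}\psi(r)$ (with $\phi(0)=0$). For $0\le r<s\le D$, there are two cases. If $r=0$, then $\phi(r)=0<\phi(s)$ because $\psi(s)>0$. If $r>0$, then $\phi(r)=\tfrac{r}{D}\psi(r)\le\tfrac{r}{D}\psi(s)<\tfrac{s}{D}\psi(s)=\phi(s)$, again using $\psi(s)>0$. So $\phi$ is strictly increasing and nonnegative. It also satisfies $\phi(r)\le\psi(r)$ because $r/D\le1$.

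Finally, I verify the inequality. Take any $x\in\mathcal{X}$ and set $r=\dist{x}{H^\star}\in[0,D]$. Then $x$ is feasible for $\psi(r)$, so $\phi(r)\le\psi(r)\le\hat F(x)-\hat F^\star$. The main obstacle is the positivity of $\psi$ on $(0,D]$, which rests on attainment of the minimum via compactness and continuity. Convexity is used only to make $H^\star$ closed and convex, and it is not even strictly needed beyond closedness.
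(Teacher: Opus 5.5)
Your proof is correct, but it reaches strict monotonicity by a different route than the paper.

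The paper uses the same level-set function (your $\psi$, its $\phi$) and shows that this function is \emph{itself} strictly increasing. For $t'<t$ it takes a minimizer $u$ over $\{x\in\mathcal{X}:\dist{x}{H^\star}\ge t\}$ and slides along the segment from $u$ toward some $x^\star\in H^\star$. The intermediate value theorem gives a point at distance exactly $t'$, and convexity of $\hat F$ (plus convexity of $\mathcal{X}$, to stay feasible) gives a strictly smaller value there. You instead use only that $\psi$ is nondecreasing (nested feasible sets) and positive on $(0,D]$ (attainment on a compact set). You then create strictness with the factor $r/D$.

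Your route is shorter and more general. It uses neither convexity of $\hat F$ nor convexity of $\mathcal{X}$, only compactness and continuity (lower semicontinuity would suffice). So the lemma holds for arbitrary continuous functions on compact sets.

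The paper's route gives sharper information. $\psi$ is the largest nondecreasing function satisfying the inequality, and the paper shows that this optimal minorant is already strictly increasing in the convex case, so nothing is lost to the factor $r/D$. For nonconvex $\hat F$ your tilt is genuinely needed: with $\hat F(x)=\min\{x,1\}$ on $[0,2]$, one gets $\psi(r)=\min\{r,1\}$, which is flat on $[1,2]$.

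The downstream uses (the Stage I lemma and the shrinking-ball lemma) need only the existence of some strictly increasing $\phi$, so your version suffices there as well.
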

\begin{proof}
Define:
\[
\phi( t ) := \min_{x\in S(t)} \hat{F}(x) - \hat{F}^\star \text{ where } S(t) := \{ x \in \mathcal{X} : \dist{x}{H^\star} \ge t \}.
\]
Assume $\mathcal{X}$ is nonempty otherwise the result trivially holds.
Before beginning the proof, we confirm a few trivial formalities: (i) $H^{\star}$ is convex, compact, and nonempty because $\mathcal{X}$ is a convex compact nonempty set and $\hat{F}$ is a continuous convex function, and (ii)
$S(t)$ is compact because $\mathcal{X}$ is compact and $\dist{x}{H^\star}$ is a continuous function of $x$.

The crux of the proof is to show that $\phi$ is strictly increasing.
Suppose $0 \le t' < t \le D$. Let $u \in \argmin_{x\in S(t)} \hat{F}(x)$, $x^\star \in H^\star$, and
$w(\lambda) := (1 - \lambda) u + \lambda x^\star$ where $w : [0,1] \rightarrow \mathcal{X}$.
Since $h(\lambda) := \dist{w(\lambda)}{H^\star}$ is a continuous function with $h(0) \ge t > 0$ and $h(1) = 0$, by the intermediate value theorem there exists some $\lambda' \in (0,1]$ such that $\dist{w(\lambda')}{H^\star} = h(\lambda') = t'$. Moreover, since $x^\star, u \in \mathcal{X}$ and $\mathcal{X}$ is convex we have $w(\lambda') \in \mathcal{X}$. We conclude that $w(\lambda') \in S(t')$.
Then, by (a) $\lambda' > 0$ and $\hat{F}(u) > \hat{F}(x^\star)$ because $\dist{u}{H^\star} \ge t > 0$, (b) convexity, and (c) $w \in S(t')$ we have 
\[
\min_{x \in S(t)} \hat{F}(x) = \hat{F}(u) >_{(a)}  (1 - \lambda' ) \hat{F}(u) + \lambda' \hat{F}(x^\star) \ge_{(b)} \hat{F}(w(\lambda')) \ge_{(c)} \min_{x \in S(t')} \hat{F}(x).
\]
Thus, $\phi(t) = \min_{x \in S(t)} \hat{F}(x) - \hat{F}^\star > \min_{x \in S(t')} \hat{F}(x) - \hat{F}^\star = \phi(t')$ as desired.
\end{proof}
\subsection{Stage I: estimating the initial distance to optimality}

For any given stochastic gradient evaluation budget $\budget>0$, we allocate $\Fbudget = \lfloor \budget /2 \rfloor$ of it to the first stage where we estimate the initial distance to optimality. In particular, the first stage considers the following regularized problem:
\begin{equation}
    \label{eq.auxiliary problem in stage 1 high prob}
    f_\Fbudget(x; s) := f(x; s)+\Fbudget^{-\alpha}\|x\|, \ 0<\alpha<\frac{1}{4}.
\end{equation}
and we apply SGD to \eqref{eq.auxiliary problem in stage 1 high prob} for $\Fbudget$ iterations,
\begin{equation}
    \label{alg:sgd high prob}
    \xsgd{t+1} \gets \xsgd{t} - \eta \gsgd
\end{equation}
where $\gsgd\in\partial f_\Fbudget(\xsgd{t};S_t)$, $S_t\stackrel{\mathrm{iid}}{\sim}P$,
starting from $\xsgd{1}=\mathbf{0}$ with step size $\eta = \Fbudget^{-3/4}$
and output $\xB=\tfrac{1}{\Fbudget}\sum_{t=1}^\Fbudget \xsgd{t}$. 
%We establish the error bound results of $\xB$.
\begin{lemma}\label{lem:iterate-bound}
For SGD (i.e., \eqref{alg:sgd high prob}) with a step size $\eta = \Fbudget^{-3/4}$, 
for all $t \le \Fbudget$ and $\Fbudget^{-\alpha} \leq L$, we have $\| \xsgd{t} \| \le 2 L \Fbudget^{1/4}$.
\end{lemma}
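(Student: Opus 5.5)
Since $x^{\text{SGD}}_1=\mathbf{0}$ and every step moves the iterate by $\eta\|g_t\|$, a naive bound gives $\|x^{\text{SGD}}_t\|\le t\eta(L+m^{-\alpha})\le 2Lm\cdot m^{-3/4}=2Lm^{1/4}$ for $t\le m$. Here $\|g_t\|\le L+m^{-\alpha}\le 2L$ because $f(\cdot;s)$ is $L$-Lipschitz, so its subgradients have norm at most $L$, and the regularizer $m^{-\alpha}\|x\|$ has subgradients of norm at most $m^{-\alpha}\le L$.

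So the plan is a direct induction on $t$, with no probabilistic argument needed. We claim $\|x^{\text{SGD}}_t\|\le (t-1)\eta\cdot 2L$ for every realization of the samples $S_t$. The base case $t=1$ is immediate since $x^{\text{SGD}}_1=\mathbf{0}$. For the inductive step, the triangle inequality gives
\[
\|x^{\text{SGD}}_{t+1}\|\le \|x^{\text{SGD}}_t\|+\eta\|g_t\|,
\]
and we bound $\|g_t\|$ by splitting $g_t=u_t+m^{-\alpha}v_t$ with $u_t\in\partial f(x^{\text{SGD}}_t;S_t)$ and $v_t\in\partial\|\cdot\|(x^{\text{SGD}}_t)$. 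The sum rule for subdifferentials applies since both functions are finite convex and continuous. We then have $\|u_t\|\le L$ by Lipschitzness and $\|v_t\|\le 1$, since $v_t$ is the unit vector $x/\|x\|$ when $x\neq 0$ and lies in the unit ball at $0$.

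Plugging in $\eta=m^{-3/4}$ and $t\le m$ gives $(t-1)\eta\cdot 2L\le 2Lm^{1/4}$, which is the claim. The only mildly delicate point is the subgradient decomposition for the nonsmooth sum, and it is standard. A sharper bound, for instance one in terms of the distance to the minimizer of the regularized problem, is not required here, so I would not pursue it.
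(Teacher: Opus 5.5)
Your proposal is correct and follows essentially the same argument as the paper. Both use induction on $t$ with the triangle inequality, bound each step by $\eta(L+m^{-\alpha})\le 2Lm^{-3/4}$, and sum over at most $m$ steps. Your explicit subgradient sum-rule decomposition simply spells out the bound $\|g_t\|\le L+m^{-\alpha}$, which the paper uses without comment.
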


\begin{proof}
Observe that by $(a)$ the triangle inequality, $(b)$ induction and $t\leq \Fbudget$, $x_1 = \mathbf{0}$, and $(c)$ the assumption $\Fbudget \geq L^{-1/\alpha}$, we get
\begin{equation*}
\| \xsgd{t} \| = \|\xsgd{t-1} -\eta g_{t-1}\|\leq_{(a)} \|\xsgd{t-1}\| + \Fbudget^{-3/4}(L+\Fbudget^{-\alpha})\leq_{(b)}\Fbudget^{1/4}(L+\Fbudget^{-\alpha}) \leq_{(c)} 2L\Fbudget^{1/4}.
\end{equation*}
\end{proof}
\Cref{lem.sgd error bound analysis high prob} is an immediate consequence of a standard result of \citet{orabona2025modern}. It shows that the suboptimality decays with high probability and that the regularizer guarantees that the norm of SGD's output, $\| \xB \|$, is not too big.

\begin{lemma}
\label{lem.sgd error bound analysis high prob}
Suppose \Cref{assume-nonsmooth-convex-optimization} holds. Then, for any $\alpha \in (0,1/4)$, $\Fbudget^{-\alpha} \le L$, $x \in \ball{2 L \Fbudget^{1/4}}$, and $\delta \in (0,1)$, with probability at least $1-\delta$,
\[
F(\xB) - F(x) \le \frac{\| x \| - \| \xB \|}{\Fbudget^{\alpha}} + \frac{ \| x \|^2 + 36 L^2 \sqrt{2 \log \frac{2}{\delta}}}{2 \Fbudget^{1/4} }.
\]
\end{lemma}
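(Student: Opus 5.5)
Fix a comparator $x\in\ball{2L\Fbudget^{1/4}}$. The plan is to run the standard high-probability online-to-batch analysis of SGD on the regularized sample functions $f_\Fbudget(\cdot;s)$ from \eqref{eq.auxiliary problem in stage 1 high prob}, and then convert the regularized suboptimality into the stated form. Since $\Fbudget^{-\alpha}\le L$, each $f_\Fbudget(\cdot;s)$ is convex and $2L$-Lipschitz, so $\|g_t\|\le 2L$. Write $F_\Fbudget(y):=F(y)+\Fbudget^{-\alpha}\|y\|$ for the population objective.

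\textbf{Step 1: the deterministic regret bound.} Expanding $\|\xsgd{t+1}-x\|^2$ and telescoping from $\xsgd{1}=\mathbf{0}$ gives
\[
\sum_{t=1}^{\Fbudget}\langle g_t,\xsgd{t}-x\rangle\le \frac{\|x\|^2}{2\eta}+\frac{\eta}{2}\sum_{t=1}^{\Fbudget}\|g_t\|^2\le \frac{\Fbudget^{3/4}\|x\|^2}{2}+2L^2\Fbudget^{1/4}.
\]

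\textbf{Step 2: martingale concentration.} Let $\bar g_t:=\E[g_t\mid\mathcal{F}_{t-1}]\in\partial F_\Fbudget(\xsgd{t})$. The differences $Z_t:=\langle \bar g_t-g_t,\xsgd{t}-x\rangle$ form a martingale difference sequence. By \Cref{lem:iterate-bound} and $\|x\|\le 2L\Fbudget^{1/4}$, we have $|Z_t|\le 4L\cdot 4L\Fbudget^{1/4}=16L^2\Fbudget^{1/4}$. Azuma--Hoeffding then gives, with probability at least $1-\delta$,
\[
\sum_t Z_t\le 16L^2\Fbudget^{3/4}\sqrt{2\log(2/\delta)}.
\]
This is the step the paper attributes to \citet{orabona2025modern}. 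The constant $36$ should come out of exactly this bounded-difference bookkeeping, possibly combined with the $2L^2$ term from Step 1 using $\sqrt{2\log(2/\delta)}\ge 1$.

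\textbf{Step 3: combine.} By convexity of $F_\Fbudget$ and Jensen's inequality,
\[
F_\Fbudget(\xB)-F_\Fbudget(x)\le \frac{1}{\Fbudget}\sum_t\langle\bar g_t,\xsgd{t}-x\rangle.
\]
Dividing the bounds of Steps 1 and 2 by $\Fbudget$ gives
\[
\frac{\|x\|^2}{2\Fbudget^{1/4}}+\frac{2L^2+16L^2\sqrt{2\log(2/\delta)}}{\Fbudget^{3/4}}\le \frac{\|x\|^2+36L^2\sqrt{2\log\frac{2}{\delta}}}{2\Fbudget^{1/4}},
\]
where the last inequality uses $\Fbudget\ge1$. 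Finally, substituting $F_\Fbudget(y)=F(y)+\Fbudget^{-\alpha}\|y\|$ on both sides and rearranging produces the term $(\|x\|-\|\xB\|)/\Fbudget^{\alpha}$, which is the claimed inequality.

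The main obstacle is Step 2. The iterates are random, so the Azuma increment bound must be deterministic. \Cref{lem:iterate-bound} is what makes it so: it bounds every iterate almost surely, which is why the comparator $x$ is restricted to $\ball{2L\Fbudget^{1/4}}$.
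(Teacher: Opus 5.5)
Your proof is correct. It has the same skeleton as the paper's: the OGD regret bound with $\eta=m^{-3/4}$, a martingale concentration step made legitimate by the almost-sure iterate bound of \Cref{lem:iterate-bound}, and Jensen's inequality. The difference is in how the concentration step is done.

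\textbf{Paper's route.} The paper rescales $f_m$ to a $[0,1]$-valued $\tilde f_m$ on $\ball{2Lm^{1/4}}$. It then cites Orabona's high-probability online-to-batch theorem, which concentrates deviations of \emph{function values} of the form $F_m(\cdot)-f_m(\cdot\,;S_t)$. Multiplying back by $8L^2m^{1/4}$ gives the term $16L^2m^{1/4}\sqrt{2\log(2/\delta)/m}$.

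\textbf{Your route.} You linearize first and apply Azuma directly to $\langle \bar g_t-g_t,\xsgd{t}-x\rangle$, whose increments are bounded by $4L\cdot 4Lm^{1/4}$. This is self-contained, needs no rescaling, and needs only the upper tail (so you could even use $\log(1/\delta)$). The paper's route is instead a black-box citation. The resulting constants coincide.

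\textbf{Arithmetic slip in Step~3.} Dividing $16L^2m^{3/4}\sqrt{2\log(2/\delta)}$ by $m$ gives $16L^2\sqrt{2\log(2/\delta)}\,m^{-1/4}$, not a term over $m^{3/4}$. The conclusion is unaffected: $2L^2m^{-3/4}+16L^2\sqrt{2\log(2/\delta)}\,m^{-1/4}\le 18L^2\sqrt{2\log(2/\delta)}\,m^{-1/4}$, using $m\ge1$ and $\sqrt{2\log(2/\delta)}\ge1$. This is exactly where $36=2\cdot 18$ comes from, as you anticipated.
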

\begin{proof}
We intend to apply the result of \citet[Theorem~3.10]{orabona2025modern}, but this requires a mapping from $\ball{2 L \Fbudget^{1/4}} \rightarrow [0,1]$. To achieve this, consider the following rescaling of $f_\Fbudget$,
\[
\tilde{f}_\Fbudget(x;S) := \frac{f_\Fbudget(x;S) - f_\Fbudget(\mathbf{0};S) + 4 L^2 \Fbudget^{1/4}}{8 L^2 \Fbudget^{1/4}} \quad \forall x \in \ball{2 L \Fbudget^{1/4}}.
\]
Since $\abs{f_\Fbudget(x;S) - f_\Fbudget(\mathbf{0};S)} \le (L+\Fbudget^{-\alpha}) \|x\| \le 4 L^2 \Fbudget^{1/4}$ we deduce that $\tilde{f}_\Fbudget(x;S) \in [0,1]$.
Since $\xsgd{t} \in \ball{2 L \Fbudget^{1/4}}$ from \Cref{lem:iterate-bound} for any realization, the result of \citet[Theorem~3.10]{orabona2025modern}, applied to $\tilde{f}_\Fbudget$, together with Jensen's inequality applied to $F_\Fbudget$, gives
\[
F_\Fbudget(\xB) - F_\Fbudget(x) \le \frac{R(x, \Fbudget)}{\Fbudget} + 16 L^2 \Fbudget^{1/4} \sqrt{\frac{2 \log \frac{2}{\delta}}{\Fbudget}}
\]
where $R(x, \Fbudget)$ is the regret of our algorithm after $\Fbudget$ steps. 
Applying the regret bound of \citet[Theorem~2.13]{orabona2025modern} with $x_1 = \mathbf{0}$ and $\eta = \Fbudget^{-3/4}$ to $R(x, \Fbudget)$ gives
\[
F_\Fbudget(\xB) - F_\Fbudget(x) \le \frac{\| x \|^2}{2 \Fbudget^{1/4}} + 2L^2 \Fbudget^{-3/4} + 16 L^2  \Fbudget^{1/4} \sqrt{\frac{2 \log \frac{2}{\delta}}{\Fbudget}}.
\]
Substituting for $F_\Fbudget$ and using $\Fbudget \ge 1$ gives the desired bound.
\end{proof}
\noindent
The goal of stage one is to use $\|\xB\|$ to estimate the initial distance to optimality. 
The event,
\[
\mathcal{E}_\Fbudget = \left\llbracket\frac{1}{2} \dist{\mathbf{0}}{X^\star}\leq \|\xB\| \leq 2R  \right\rrbracket,
\]
measures whether our goal was achieved.
\Cref{lem.xb converge non asym} shows that for sufficiently large $\Fbudget$ the event $\mathcal{E}_\Fbudget$ will occur with high probability. 
The proof focuses on the case in which the suboptimality bound given in \Cref{lem.sgd error bound analysis high prob} holds and shows that this implies that $\| \xB \|$ is, for sufficiently large $\Fbudget$, bounded by $2R$. By additionally employing \Cref{lem:monotone-error-bound} and the fact that the suboptimality tends to zero, we establish $\frac{1}{2} \dist{\mathbf{0}}{X^\star}\leq \|\xB\|$.

\begin{lemma}
\label{lem.xb converge non asym}
Suppose \Cref{assume-nonsmooth-convex-optimization} holds. For any $\delta \in (0,1)$, if 
%\hinder{probably can simplify this down for the final version and keep this bound in the proof in the appendix}
\begin{equation*}
\begin{gathered}
\Fbudget \geq \max\left\{
\frac{1}{L^{\frac{1}{\alpha}}},\,
\frac{R^4}{16L^4},\,
B_\delta,\,
\BudgetLowerBoundPhi
\right\},\\[1ex]
B_\delta
:=
\begin{cases}
\left(\frac{36 L^2 \sqrt{2\log\frac{2}{\delta}}}{4R}\right)^{\frac{1}{1/4-\alpha}} & \text{if } \mathbf{0} \in X^\star \\
\left(\frac{\|\Pi_{X^\star}(\mathbf{0})\|^2+36 L^2 \sqrt{2\log\frac{2}{\delta}}}{2\|\Pi_{X^\star}(\mathbf{0})\|}\right)^{\frac{1}{1/4-\alpha}} & \text{otherwise},
\end{cases},\qquad
\BudgetLowerBoundPhi
:=
\begin{cases}
0 & \text{if } \mathbf{0} \in X^\star \\
\left(\frac{2\|\Pi_{X^\star}(\mathbf{0})\|}{\phi\left(\frac{\|\Pi_{X^\star}(\mathbf{0})\|}{2}\right)}\right)^{\frac{1}{\alpha}} & \text{otherwise},
\end{cases}
\end{gathered}
\end{equation*}
then $\pr(\mathcal{E}_\Fbudget) \ge 1 - \delta$, where $\phi$ is the function obtained by applying \Cref{lem:monotone-error-bound} to $F$ on $\ball{2R}$.
\end{lemma}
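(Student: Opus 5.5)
Throughout I work on the event of \Cref{lem.sgd error bound analysis high prob}, which holds with probability at least $1-\delta$, and I show that $\mathcal{E}_\Fbudget$ holds deterministically on it. Set $x^\star := \Pi_{X^\star}(\mathbf{0})$, so $\|x^\star\| = \dist{\mathbf{0}}{X^\star} \le R$. The first task is to check that $x^\star$ is admissible in the lemma. Since $\Fbudget \ge R^4/(16L^4)$ we have $2L\Fbudget^{1/4} \ge R$, so $x^\star \in \ball{2L\Fbudget^{1/4}}$. Also $\Fbudget^{-\alpha}\le L$ is part of the hypothesis. Plugging $x = x^\star$ in and using $F(\xB) - F(x^\star) \ge 0$ gives
\[
\|\xB\| \le \|x^\star\| + \Fbudget^{\alpha}\cdot\frac{\|x^\star\|^2 + 36L^2\sqrt{2\log\frac{2}{\delta}}}{2\Fbudget^{1/4}}, \qquad F(\xB) - F^\star \le \frac{\|x^\star\|}{\Fbudget^{\alpha}} + \frac{\|x^\star\|^2 + 36L^2\sqrt{2\log\frac{2}{\delta}}}{2\Fbudget^{1/4}}.
\]

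\textbf{Upper bound $\|\xB\|\le 2R$.} If $\mathbf{0}\in X^\star$, then $x^\star=\mathbf{0}$ and the second term is $36L^2\sqrt{2\log(2/\delta)}/(2\Fbudget^{1/4-\alpha})$. This is at most $2R$ exactly when $\Fbudget \ge B_\delta$ in the first case, which the hypothesis provides. Otherwise, $\Fbudget\ge B_\delta$ in the second case makes the second term at most $\|x^\star\|$. Hence $\|\xB\|\le 2\|x^\star\|\le 2R$. In the case $\mathbf{0}\in X^\star$ the lower bound $\tfrac12\dist{\mathbf{0}}{X^\star}=0\le\|\xB\|$ is trivial, so only the second case remains.

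\textbf{Lower bound $\|\xB\| \ge \tfrac12\|x^\star\|$ when $x^\star\neq\mathbf{0}$.} Suppose, for contradiction, that $\|\xB\| < \|x^\star\|/2$. Then $\dist{\xB}{X^\star} \ge \dist{\mathbf{0}}{X^\star} - \|\xB\| > \|x^\star\|/2$.

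To use \Cref{lem:monotone-error-bound} I apply it to $F$ restricted to $\ball{2R}$. I need two facts here:
\begin{enumerate}[i.]
\item $\xB\in\ball{2R}$, which was just shown;
\item the restricted minimizer set $H^\star$ equals $X^\star\cap\ball{2R}\neq\emptyset$, which holds because $x^\star\in\ball{R}$ and $F$ is convex.
\end{enumerate}
Since $H^\star\subseteq X^\star$, distances to $H^\star$ are at least distances to $X^\star$, and $\phi$ is increasing. Therefore
\[
F(\xB)-F^\star \ge \phi(\dist{\xB}{H^\star}) \ge \phi(\|x^\star\|/2).
\]

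Now I combine this with the suboptimality bound. By $\Fbudget\ge B_\delta$, the $\Fbudget^{-1/4}$ term is at most $\|x^\star\|\Fbudget^{-\alpha}$, so $F(\xB)-F^\star \le 2\|x^\star\|\Fbudget^{-\alpha}$. The condition $\Fbudget\ge \BudgetLowerBoundPhi$ gives $2\|x^\star\|\Fbudget^{-\alpha} \le \phi(\|x^\star\|/2)$.

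This only yields $\le$ against $\ge$, not a strict contradiction. The main obstacle is therefore to make one inequality strict. I would obtain strictness from the distance step: $\dist{\xB}{H^\star} > \|x^\star\|/2$ strictly, and $\phi$ is strictly increasing, so
\[
\phi(\dist{\xB}{H^\star}) > \phi(\|x^\star\|/2) \ge 2\|x^\star\|\Fbudget^{-\alpha} \ge F(\xB)-F^\star.
\]
This is a contradiction. I also need to confirm that $\dist{\xB}{H^\star}\le D$ so that $\phi$ is defined there, which holds because $\xB\in\ball{2R}$.

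Hence $\mathcal{E}_\Fbudget$ holds on the high-probability event, and so $\pr(\mathcal{E}_\Fbudget)\ge 1-\delta$.
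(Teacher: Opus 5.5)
Your proposal is correct and follows essentially the same route as the paper: apply \Cref{lem.sgd error bound analysis high prob} at $x^\star=\Pi_{X^\star}(\mathbf{0})$, use $\Fbudget\ge B_\delta$ to get $\|\xB\|\le 2R$, and then combine \Cref{lem:monotone-error-bound} on $\ball{2R}$ with $\Fbudget\ge\BudgetLowerBoundPhi$ for the lower bound. The only difference is cosmetic: the paper argues directly rather than by contradiction, inverting the strictly increasing $\phi$ in $\phi(\dist{X^\star}{\xB})\le\phi(\|x^\star\|/2)$ to get $\dist{X^\star}{\xB}\le\|x^\star\|/2$ and hence $\|\xB\|\ge\|x^\star\|/2$, so the strictness you flag as the ``main obstacle'' is just this same use of strict monotonicity.
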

\begin{proof}
Let $x^\star = \Pi_{X^\star}(\mathbf{0})$ and thus with $\| x^\star \| \le R$.
From the premise that\[\Fbudget \ge \max\{ L^{-1/\alpha}, (R / (2 L))^4 \}\]and \Cref{lem.sgd error bound analysis high prob},
\begin{flalign}\label{eq:sgd-objective-gap-decays}
F(\xB) - F(x^\star) \le \frac{\| x^\star \| - \| \xB \|}{\Fbudget^{\alpha}} + \frac{ \| x^\star \|^2 + 36 L^2 \sqrt{2 \log \frac{2}{\delta}}}{2 \Fbudget^{1/4} }
\end{flalign}
with probability $1-\delta$.
Thus, it suffices to establish $\mathcal{E}_\Fbudget$ under the assumptions that 
\eqref{eq:sgd-objective-gap-decays} holds.
If $\mathbf{0}\in X^\star$, then $x^\star=\mathbf{0}$ and rearranging \eqref{eq:sgd-objective-gap-decays} and applying $F(\xB)-F(x^\star)\geq 0$ yields
\begin{equation}\label{eq:zero-optimum-stage-one-bound}
\|\xB\|\leq \frac{36 L^2 \sqrt{2\log\frac{2}{\delta}}}{2\Fbudget^{1/4-\alpha}}
\leq 2R,
\end{equation}
where the last inequality uses that $\Fbudget\geq B_\delta$.
Therefore, $\mathcal{E}_\Fbudget$ holds. For the remainder of the proof, suppose $\mathbf{0}\notin X^\star$, and hence $x^\star\neq\mathbf{0}$.

Rearranging \eqref{eq:sgd-objective-gap-decays} and applying $F(\xB) - F(x^\star) \ge 0$ yields
\[
\| \xB \|  \le \| x^\star \| + \frac{ \| x^\star \|^2 + 36 L^2 \sqrt{2 \log \frac{2}{\delta}}}{2 \Fbudget^{1/4 - \alpha} } \le_{(a)} 2\|x^\star\| \le 2R
\]
where $(a)$ uses that $\Fbudget \ge B_\delta$. Thus $\| \xB \| \le 2R$ as desired.

Next, we prove $\frac{1}{2}\dist{\mathbf{0}}{X^\star}\leq \|\xB\|$. Defining the function $\hat{F}(x) : \ball{2 R} \rightarrow \R$ as  $\hat{F}(x) := F(x)$ for all $x \in \ball{2 R}$, 
using $F(\xB) - F(x^\star) = \hat{F}(\xB) - \hat{F}(x^\star) \ge \phi(\dist{X^\star \cap \ball{2 R}}{\xB}) \ge  \phi(\dist{X^\star}{\xB})$ from \Cref{lem:monotone-error-bound} and applying this inequality to \eqref{eq:sgd-objective-gap-decays} yields,
\begin{equation*}
\phi(\dist{X^\star}{\xB}) \le \frac{\| x^\star \| - \| \xB \|}{\Fbudget^{\alpha}} + \frac{ \| x^\star \|^2 + 36 L^2 \sqrt{2 \log \frac{2}{\delta}}}{2 \Fbudget^{1/4} }
\le_{(a)} \frac{2\|x^\star\|}{\Fbudget^\alpha} \le_{(b)} \phi(\| x^\star \| /2).
\end{equation*}
$(a)$ uses that $\Fbudget \ge B_\delta$ and $(b)$ uses that $\Fbudget \ge \left(\frac{2\|x^\star\|}{\phi(\|x^\star\|/2)}\right)^{1/\alpha}$. Thus, \(\| x^\star \| / 2 \ge \dist{X^\star}{\xB} \ge \|\Pi_{X^\star}(\mathbf{0})\| - \| \xB \| =  \| x^\star \| - \| \xB \|\) as desired.
%$\frac{1}{2} \| x^\star \| \le \| \xB \|$.
\end{proof}

\subsection{Stage II: a faster asymptotic rate}

We now use our estimate of the initial distance to optimality to design \Cref{alg:nested-adaptive}, which obtains a $o(\budget^{-1/2})$ convergence rate. Since we allocate $\Fbudget = \lfloor \budget /2 \rfloor$ stochastic gradient evaluations to the first stage, the event that our estimate is sufficiently accurate is $\mathcal{E}_{\lfloor \budget/2 \rfloor}$.

\Cref{alg:nested-adaptive} solves a sequence of subproblems over shrinking balls using Adaptive SGD (also known as isotropic SGD \citep{gupta2017unified}, a scalar variant of \textsc{AdaGrad} \citep{mcmahan2010adaptive,duchi2011adaptive}), each using fewer and fewer iterations. 
Critically, the number of iterations is selected carefully such that, if a minimizer remains inside the ball, the worst-case convergence bound will improve because
$R_k k^2 \rightarrow 0$ as $k \rightarrow \infty$. The random variable $\kn$, which is defined in \Cref{define:kb stochastic}, represents the last subproblem that contains the minimizer. \Cref{fig:overall} provides visualizations to help understand \Cref{alg:nested-adaptive} and \Cref{define:kb stochastic}.

\begin{definition}\label{define:kb stochastic}
For any $\budget > 0$ and $k\leq k_{\max}$, define the event
\begin{equation}\label{eq.define kb stochastic}
\mathcal{B}_{\budget,k}
:=\left\llbracket
    \exists x^\star \in X^\star \ \text{such that} \ x^\star \in\ball{x_{1,k},R_k}
\right\rrbracket.
\end{equation}
On $\mathcal{E}_{\lfloor \budget/2 \rfloor}$, define $\kn$ as the largest index $k\leq k_{\max}$ such that $\mathcal{B}_{\budget,k}$ occurs; on $\mathcal{E}_{\lfloor \budget/2\rfloor}^c$, set $\kn:=0$. In particular, $\kn\geq1$ on $\mathcal{E}_{\lfloor \budget/2 \rfloor}$.
\end{definition}

\begin{algorithm}[H]
\caption{Our parameter-free method with an \texorpdfstring{$o(\budget^{-1/2})$}{o(n\string^(-1/2))} asymptotic convergence rate.}
\label{alg:nested-adaptive}
\begin{algorithmic}[1]
\Require Initial point $x_{1,1} = \mathbf{0}$, budget for total number of stochastic gradient evaluations $\budget>0$
\State $R_1 \gets  2 \| \xStageOne{\lfloor \budget/2 \rfloor} \|$ \Comment{Estimate initial distance to optimality}
\For{$k=1,2,\dots,k_{\max} := \lfloor \sqrt{3 \budget} / \pi \rfloor$}
    \State Set $R_k \gets \tfrac{R_1}{k^3}$, $T_k \gets \left\lfloor \tfrac{\budget}{k^2}\cdot\tfrac{3}{\pi^2}\right\rfloor$
    \For{$t=1,\dots,T_k$}\Comment{Run \ADASGD{} for $T_k$ iterations over $\ball{x_{1,k},R_k}$}
        \State Compute stochastic gradient $g_{t,k}$ at $x_{t,k}$
        \State $x_{t+1,k} \gets \Pi_{\ball{x_{1,k},R_k}}\Biggl(x_{t,k} - \dfrac{R_k}{\sqrt{\sum_{i=1}^t \|g_{i,k}\|^2}}\, g_{t,k}\Biggr)$ 
    \EndFor
    \State Set $x_{1,k+1} \gets \frac{1}{T_k}\sum_{t=1}^{T_k} x_{t,k}$
\EndFor
\State \Return $\xout{\budget} := x_{1,k_{\max}+1}$
\end{algorithmic}
\end{algorithm}
\begin{figure}[ht]
    \centering

    % =======================
    % Top row: three panels
    % =======================
    \begin{minipage}[b]{0.33\textwidth}
        \centering
        \includegraphics[width=\textwidth]{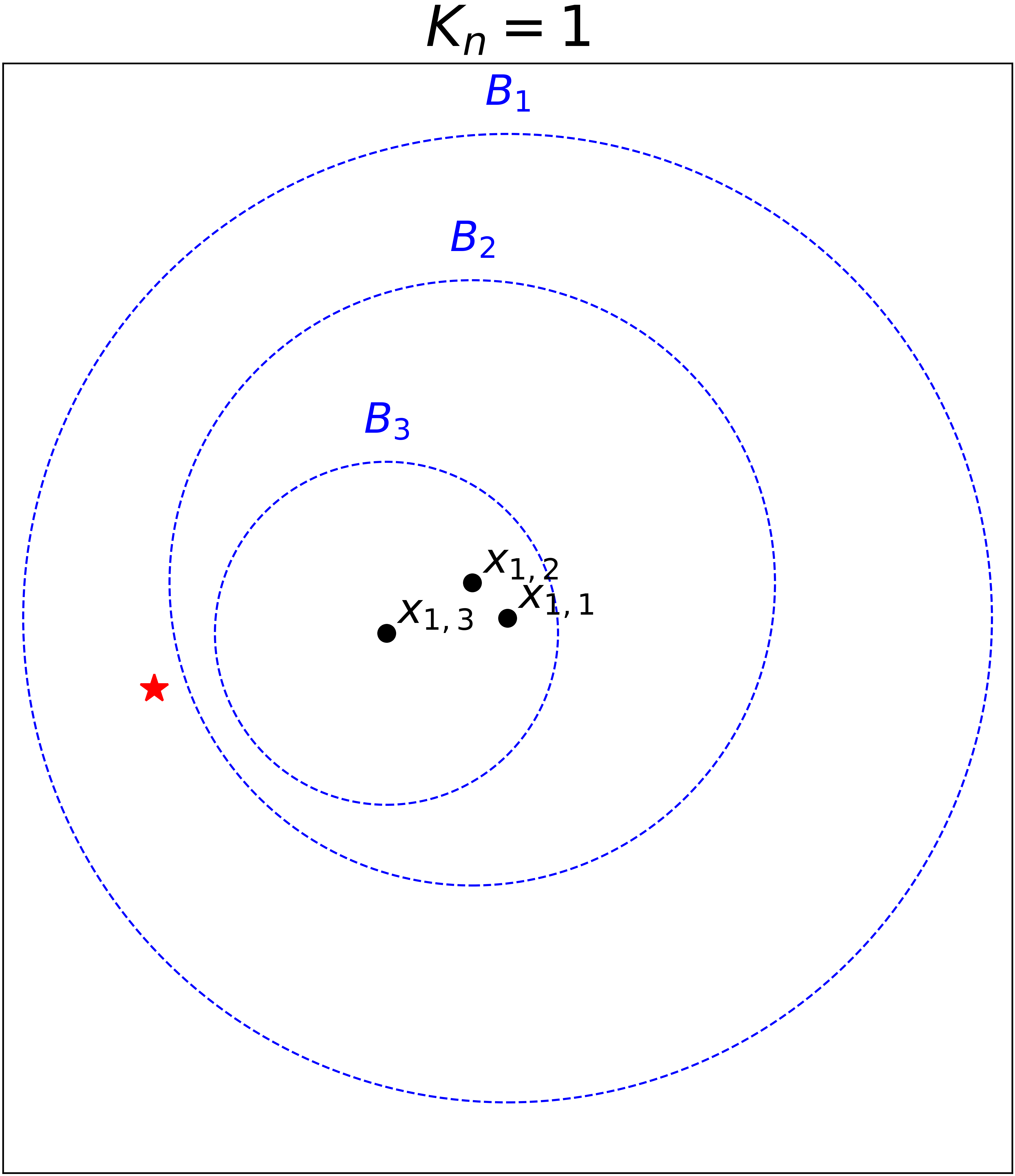}
    \end{minipage}%
    \begin{minipage}[b]{0.33\textwidth}
        \centering
        \includegraphics[width=\textwidth]{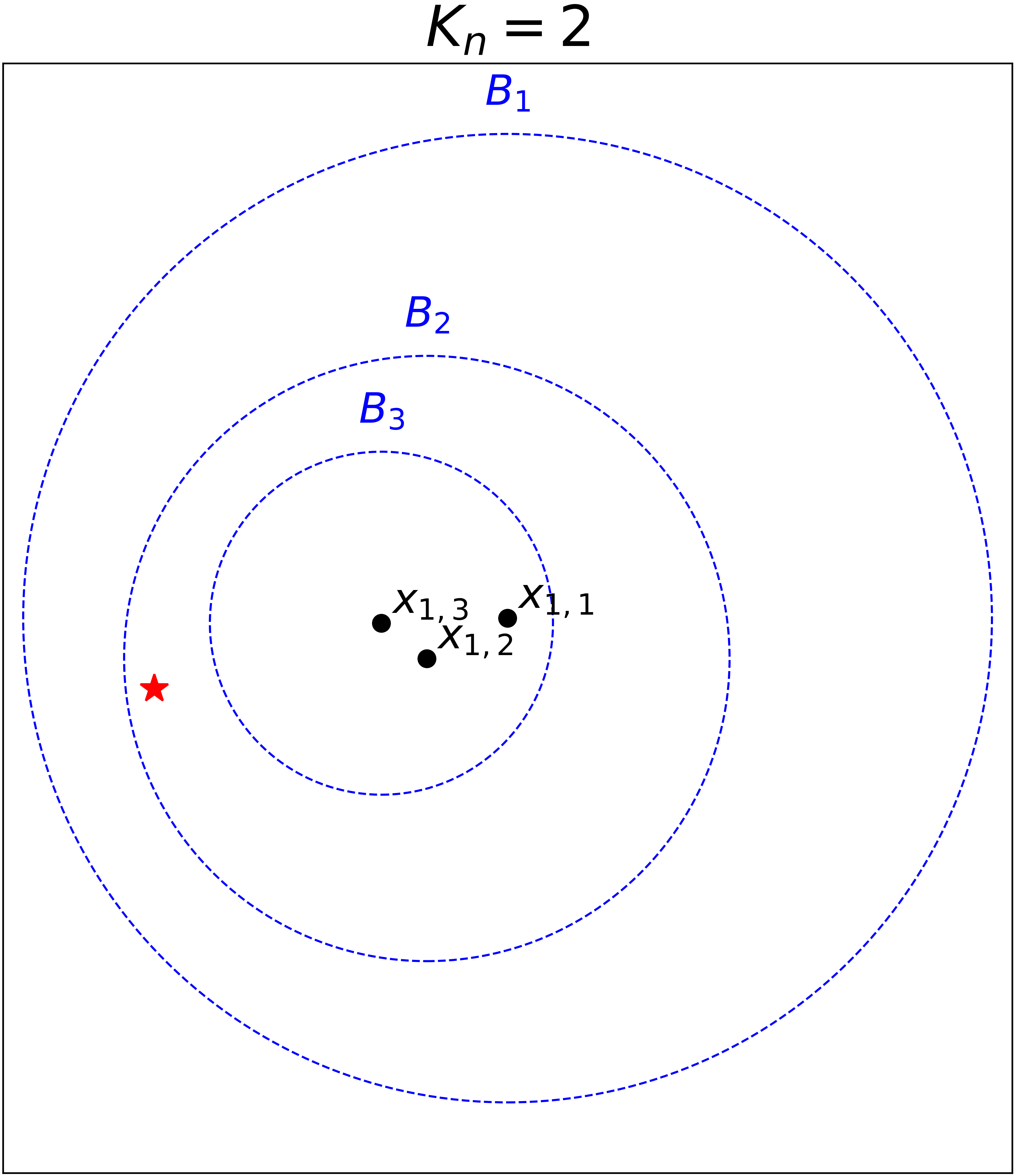}
    \end{minipage}%
    \begin{minipage}[b]{0.33\textwidth}
        \centering
        \includegraphics[width=\textwidth]{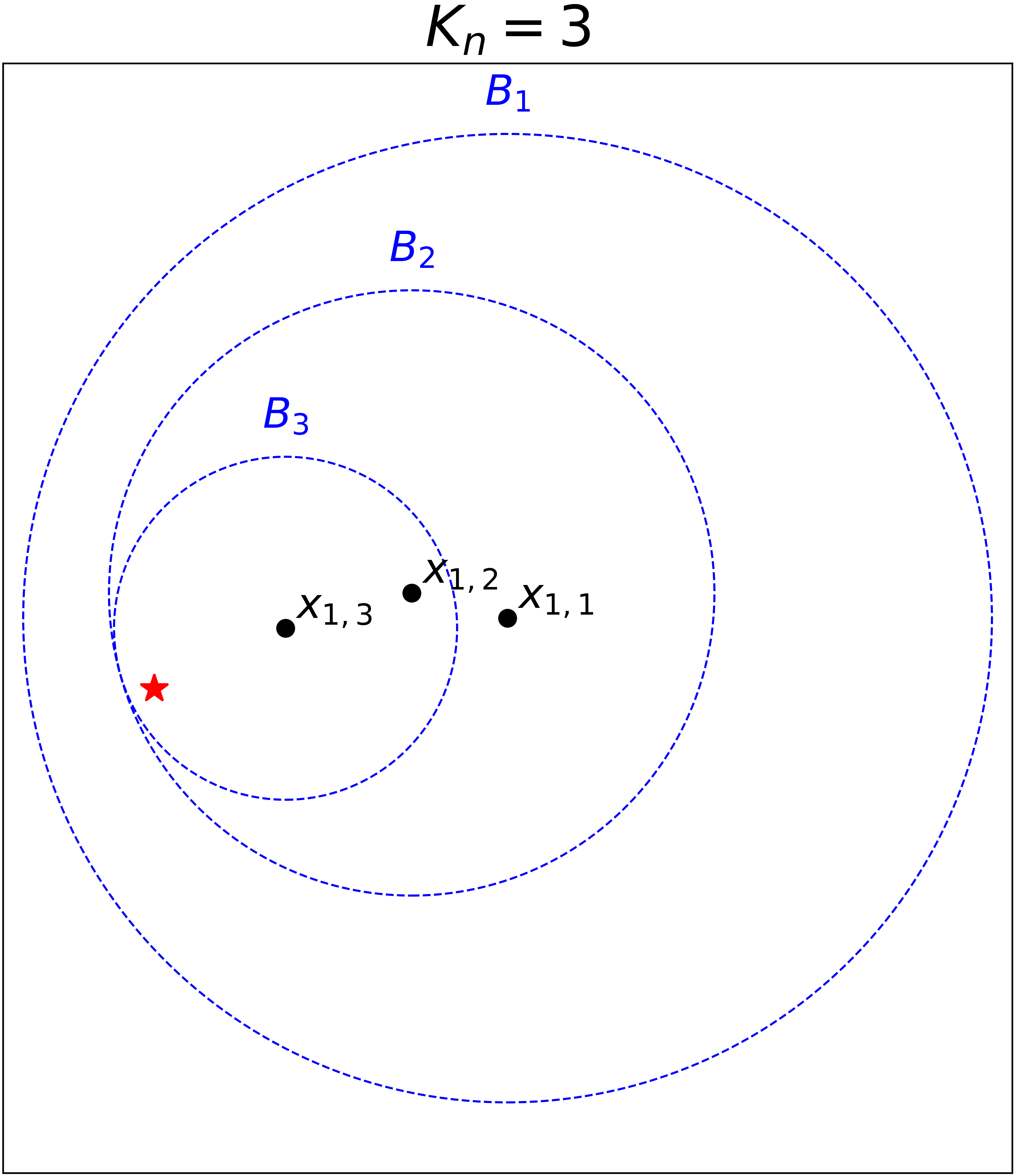}
    \end{minipage}

    \vspace{0.6em}

    % =======================
    % Bottom row: R_k schedule
    % =======================
    \includegraphics[width=\textwidth]{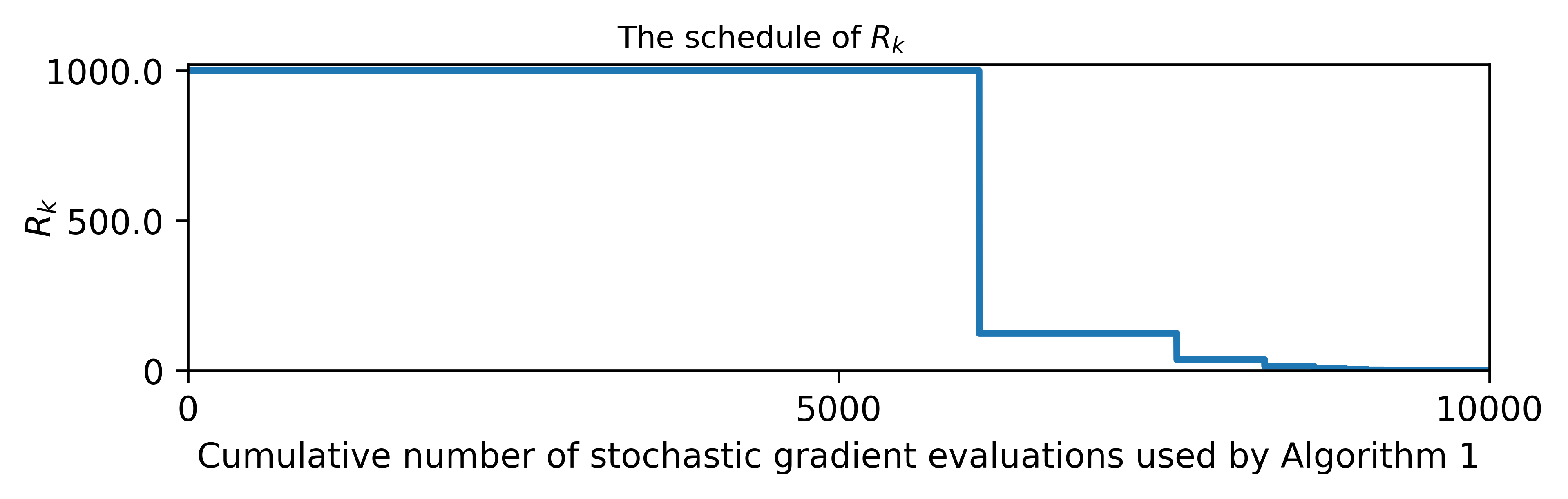}

    \vspace{-0.6em}
    \caption{
    Evolution of \Cref{alg:nested-adaptive} under increasing budget.
    \emph{Top:} optimization trajectories for $\kn=1,2,3$ (from left to right), where the red star denotes the optimal solution and $B_k$ denotes the ball $\ball{x_{1,k},R_k}$.
    \emph{Bottom:} the schedule of $R_k$ used in \Cref{alg:nested-adaptive} with $R_1=1000$ and total budget $n=20000$.
    Recall that the first $n/2=10,000$ stochastic gradient evaluations are used to run stage 1, i.e., compute $R_1$.
    }
    \label{fig:overall}
\end{figure}

The following \Cref{thm.stochastic case} provides a nonasymptotic bound on the probability that a shrinking ball contains a minimizer whenever $R_1$ is bounded below by a deterministic constant $r\in(0,4R]$.
The proof leverages well-established convergence results for \ADASGD{}:

\begin{fact}
    \label{fact.adasgd stochastic}
    Suppose \Cref{assume-nonsmooth-convex-optimization} holds. For any $k\le k_{\max}$ and $0<\delta_k<1$, fix an arbitrary realization of all randomness before outer loop $k$. Then, over the fresh samples in loop $k$, with probability at least $1-\delta_k$,
    \begin{equation}
        \label{eq.convergence adasgd stochastic high prob}
        F(x_{1,k+1})-\min _{x \in \ball{x_{1,k},R_k}} F(x) \leq \frac{10L R_k}{\sqrt{T_k}}\sqrt{\log (2/\delta_k)}.
    \end{equation}
    Also, for the same fixed realization, taking expectation over loop $k$ gives
    \begin{equation}
        \label{eq.convergence adasgd stochastic expectation}
        \mathbb{E}\!\left[F(x_{1,k+1})-\min _{x \in \ball{x_{1,k},R_k}} F(x)\right] \leq \frac{20L R_k}{\sqrt{T_k}}.
    \end{equation}
\end{fact}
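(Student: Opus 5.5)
The plan is to condition on everything generated before outer loop $k$. This fixes the center $x_{1,k}$, the radius $R_k$ and the horizon $T_k$. Loop $k$ is then just projected \ADASGD{} (the scalar AdaGrad-norm step) run on the fixed compact convex set $\mathcal{K}:=\ball{x_{1,k},R_k}$, whose diameter is $2R_k$, using i.i.d.\ fresh samples $S_{1,k},\dots,S_{T_k,k}\sim P$. Since $k\le k_{\max}=\lfloor\sqrt{3\budget}/\pi\rfloor$, we have $T_k=\lfloor 3\budget/(\pi^2k^2)\rfloor\ge 1$, so the average $x_{1,k+1}$ is well defined. Let $u\in\argmin_{x\in\mathcal{K}}F(x)$, which exists because $F$ is continuous and $\mathcal{K}$ is compact in $\hilbert{d}$. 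By convexity of $F$ and Jensen's inequality,
\[
F(x_{1,k+1})-F(u)\le \frac{1}{T_k}\sum_{t=1}^{T_k}\bigl(F(x_{t,k})-F(u)\bigr)\le \frac{1}{T_k}\sum_{t=1}^{T_k}\langle \grad F(x_{t,k}),x_{t,k}-u\rangle .
\]
I would then split the right-hand side into a regret term $\sum_t\langle g_{t,k},x_{t,k}-u\rangle$ and a noise term $\sum_t\langle \grad F(x_{t,k})-g_{t,k},x_{t,k}-u\rangle$. Here $\grad F(x_{t,k})$ is taken to be $\E[g_{t,k}\mid\text{past}]$, which is a subgradient of $F$ at $x_{t,k}$.

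\textbf{Regret term.} Use the standard analysis of projected online gradient descent with nonincreasing step sizes $\eta_t=R_k/\sqrt{\sum_{i\le t}\|g_{i,k}\|^2}$ on a set of diameter $D=2R_k$. This gives
\[
\sum_t\langle g_{t,k},x_{t,k}-u\rangle\le \frac{D^2}{2\eta_{T_k}}+\frac12\sum_t\eta_t\|g_{t,k}\|^2 .
\]
Combining this with the inequality $\sum_t a_t/\sqrt{\sum_{i\le t}a_i}\le 2\sqrt{\sum_t a_t}$ yields the bound $3R_k\sqrt{\sum_t\|g_{t,k}\|^2}\le 3LR_k\sqrt{T_k}$, because every sample function is $L$-Lipschitz and hence $\|g_{t,k}\|\le L$. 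If the first few gradients are zero, the step is undefined; I adopt the convention that the iterate stays put, and those rounds contribute nothing to either side of the bound.

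\textbf{Noise term and constants.} The noise term is a sum of martingale differences with respect to the filtration of loop $k$, because $x_{t,k}$ is determined by the samples before round $t$. Each increment is bounded in absolute value by $2L\cdot 2R_k=4LR_k$. Azuma--Hoeffding then gives, with probability at least $1-\delta_k$, a bound of $4LR_k\sqrt{2T_k\log(1/\delta_k)}$. Dividing by $T_k$ gives
\[
\frac{LR_k}{\sqrt{T_k}}\Bigl(3+4\sqrt{2\log(1/\delta_k)}\Bigr).
\]
To reach the form in \eqref{eq.convergence adasgd stochastic high prob}, note that $\sqrt{\log(2/\delta_k)}\ge\sqrt{\log 2}>0.8$, so $3\le 3.75\sqrt{\log(2/\delta_k)}$, and that $4\sqrt{2}<5.7$. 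The total coefficient is therefore below $10\sqrt{\log(2/\delta_k)}$.

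For the expectation bound \eqref{eq.convergence adasgd stochastic expectation}, I take conditional expectations over loop $k$. The martingale term has mean zero, and the regret term is at most $3LR_k/\sqrt{T_k}$ deterministically, which is well within $20LR_k/\sqrt{T_k}$. The step I expect to need the most care is not the concentration argument but the bookkeeping. First, the comparator $u$ and the set $\mathcal{K}$ must be fixed before loop $k$ starts; conditioning on the past ensures this, and without it the martingale argument would fail. Second, the AdaGrad-norm regret constant must hold for the projected version with a data-dependent step size; the standard proof handles this because projection onto a convex set is nonexpansive and the step sizes are nonincreasing.
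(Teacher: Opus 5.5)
Your proposal is correct and follows essentially the route the paper indicates. The paper obtains the fact by applying the online-to-batch conversion and the adaptive-step-size regret bound of \citet[Theorems~3.10 and~4.14]{orabona2025modern} to projected \ADASGD{}, which is exactly your Jensen--regret--Azuma decomposition with the $3R_k\sqrt{\sum_t\|g_{t,k}\|^2}$ regret bound. The one cosmetic difference is that you apply Azuma--Hoeffding to the linearized gaps $\langle \grad F(x_{t,k})-g_{t,k},x_{t,k}-u\rangle$ rather than to the rescaled loss values used in the cited conversion, and your resulting constants fall within the stated $10$ and $20$.
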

This fact can be established by applying the well-known results of \citet[Theorems~3.10 and~4.14]{orabona2025modern} to isotropic \ADASGD~\citep[Section~3.3]{gupta2017unified}.

The high-level idea is that the progress achieved in each outer loop 
iteration can be carried over to the subsequent one and thus we can apply \Cref{lem:monotone-error-bound}.

\begin{lemma}
    \label{thm.stochastic case}
    Suppose \Cref{assume-nonsmooth-convex-optimization} holds. For any $1\le \bar k \le k_{\max}$, $\delta \in (0,1)$, and $r\in(0,4R]$, we have 
    \begin{equation}\label{eq.kb non asym}
    \pr\left(\mathcal{B}_{\budget,\bar k}^c\cap\mathcal{E}_{\lfloor \budget/2\rfloor}\cap\left\llbracket R_1\geq r\right\rrbracket\right)
    \leq\delta\,\pr\left(\mathcal{E}_{\lfloor \budget/2\rfloor}\cap\left\llbracket R_1\geq r\right\rrbracket\right)
    \quad\text{when}\quad \budget \geq \left(\frac{40\sqrt{2}\pi L R H_\delta}{\sqrt{3}\phi_{8R}(r/{\bar k}^3)}\right)^2.
    \end{equation}
    where $H_\delta := \sum_{k=1}^{\infty} \frac{\sqrt{\log (2^{k+1}/\delta)}}{k^2} <\infty$, and $\phi_{8R}$ denotes the strictly increasing function obtained by applying \Cref{lem:monotone-error-bound} to $F$ on $\ball{8R}$.
\end{lemma}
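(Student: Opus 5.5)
We argue by conditioning on the stage-one output and then running an induction over the outer loops $k=1,\dots,\bar k$ with per-loop failure probabilities $\delta_k := \delta/2^{k}$, so that $\sum_{k}\delta_k\le\delta$. Conditioning on stage one gives the factor $\pr(\mathcal{E}_{\lfloor \budget/2\rfloor}\cap\llbracket R_1\ge r\rrbracket)$ on the right-hand side. All samples used in Algorithm~\ref{alg:nested-adaptive} are fresh, and $\mathcal{E}_{\lfloor \budget/2\rfloor}$ and $R_1$ are measurable with respect to stage one. It therefore suffices to fix a stage-one realization in $\mathcal{E}_{\lfloor \budget/2\rfloor}\cap\llbracket R_1\ge r\rrbracket$ and show that the conditional probability of $\mathcal{B}_{\budget,\bar k}^c$ is at most $\delta$.

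\emph{Base case.} On $\mathcal{E}_{\lfloor \budget/2\rfloor}$ we have $R_1=2\|\xB\|\ge \dist{\mathbf{0}}{X^\star}$. Hence $\mathcal{B}_{\budget,1}$ holds, since $\ball{\mathbf{0},R_1}$ contains $\Pi_{X^\star}(\mathbf{0})$. We also have $R_1\le 4R$.

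\emph{Inductive step.} Suppose $\mathcal{B}_{\budget,k}$ holds for some $k<\bar k$. Then $\min_{\ball{x_{1,k},R_k}}F=F^\star$. Fact~\ref{fact.adasgd stochastic}, applied conditionally on everything before loop $k$, gives with probability at least $1-\delta_k$
\[
F(x_{1,k+1})-F^\star\le \frac{10LR_k}{\sqrt{T_k}}\sqrt{\log(2^{k+1}/\delta)}.
\]
The iterates stay in $\ball{8R}$: we have $\|x_{1,k+1}\|\le \|x_{1,1}\|+\sum_j R_j\le R_1\zeta(3)\le 4R\cdot 1.21<8R$, because each average lies in the previous ball and $x_{1,k+1}\in\ball{x_{1,k},R_k}$. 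The relevant minimizer $\Pi_{X^\star}(\mathbf{0})$ also has norm at most $R$. Applying Lemma~\ref{lem:monotone-error-bound} on $\ball{8R}$ gives $\phi_{8R}(\dist{x_{1,k+1}}{X^\star\cap\ball{8R}})\le F(x_{1,k+1})-F^\star$. We need $\dist{x_{1,k+1}}{X^\star}\le R_{k+1}=R_1/(k+1)^3$, and it is enough to show
\[
\frac{10LR_k\sqrt{\log(2^{k+1}/\delta)}}{\sqrt{T_k}}<\phi_{8R}(r/\bar k^3)\le \phi_{8R}(R_{k+1}),
\]
using $R_{k+1}\ge r/\bar k^3$ and the monotonicity of $\phi_{8R}$.

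\emph{Bounding the left side.} Use $T_k\ge \frac{3\budget}{2\pi^2k^2}$, which holds since $k\le k_{\max}$ forces $\frac{3\budget}{\pi^2k^2}\ge1$ and then the floor loses at most half. With $R_k\le 4R/k^3$ this gives
\[
\frac{10LR_k\sqrt{\log}}{\sqrt{T_k}}\le \frac{40\sqrt2\pi LR}{\sqrt{3\budget}}\cdot\frac{\sqrt{\log(2^{k+1}/\delta)}}{k^2}\le \frac{40\sqrt2\pi LRH_\delta}{\sqrt{3\budget}}.
\]
The budget condition makes this last quantity at most $\phi_{8R}(r/\bar k^3)$. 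If we only get equality here, we get $\dist{x_{1,k+1}}{X^\star}\le R_{k+1}$ by monotonicity (strictness gives $<$), which is enough. Closedness of $X^\star$ and finite dimension then provide an actual $x^\star\in\ball{x_{1,k+1},R_{k+1}}$.

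\emph{Union bound.} Over $k=1,\dots,\bar k-1$, the total failure probability is at most $\sum_k \delta/2^k\le\delta$. Formally, the probability of the first failure at step $k$ is bounded by $\delta_k$ via the tower property. Multiplying by the stage-one probability gives \eqref{eq.kb non asym}.

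The main obstacle I expect is making the conditioning rigorous, so that Fact~\ref{fact.adasgd stochastic}'s ``fixed realization'' statement chains across loops, together with the domain technicality that Lemma~\ref{lem:monotone-error-bound} is stated for minimizers over $\ball{8R}$. The latter needs $X^\star\cap\ball{8R}\neq\emptyset$ with the same optimal value, and distances to $X^\star\cap\ball{8R}$ dominate distances to $X^\star$.
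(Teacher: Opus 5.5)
Your proof is correct, but its core step is organized differently from the paper's. The paper does not induct on the balls. On the event where all per-loop guarantees of \Cref{fact.adasgd stochastic} hold, it uses only $\mathcal{B}_{\budget,1}$ and the fact that both $x_{1,k}$ and $x_{1,k+1}$ lie in $\ball{x_{1,k},R_k}$ to telescope:
\[
F(x_{1,\bar k})-F^\star\le\sum_{k=1}^{\bar k-1}\Bigl(F(x_{1,k+1})-\min_{x\in\ball{x_{1,k},R_k}}F(x)\Bigr)\le\frac{40\sqrt2\pi LR}{\sqrt{3\budget}}H_\delta .
\]
It then invokes \Cref{lem:monotone-error-bound} once, at $x_{1,\bar k}$, to get $\dist{x_{1,\bar k}}{X^\star}\le r/\bar k^3\le R_{\bar k}$.

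You instead prove $\mathcal{B}_{\budget,k}\Rightarrow\mathcal{B}_{\budget,k+1}$ step by step. The inductive hypothesis lets you replace $\min_{\ball{x_{1,k},R_k}}F$ by $F^\star$, and you apply the error bound $\bar k-1$ times.

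What each route buys:
\begin{itemize}
\item The paper's route needs no information about the intermediate balls, and it is the same telescoping reused in \Cref{thm.stochastic case expectation}. It is also why the full series $H_\delta$ appears in the budget threshold.
\item Your route gives the stronger conclusion that every $\mathcal{B}_{\budget,k}$ with $k\le\bar k$ holds on the good event. Each step consumes only one term $\sqrt{\log(2^{k+1}/\delta)}/k^2$ of the series, so your argument would still go through with $H_\delta$ replaced by its largest term, $\sqrt{\log(4/\delta)}$.
\end{itemize}
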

\begin{proof}
On $\mathcal{E}_{\lfloor\budget/2\rfloor}\cap\left\llbracket R_1\geq r\right\rrbracket$, we have $\dist{\mathbf{0}}{X^\star}\leq R_1$, so the conclusion holds for $\bar{k}=1$ deterministically. Now we consider the case $\bar{k}\geq2$.
For any $\delta\in(0,1)$, define $\delta_k := 2^{-k}\delta$, and the event
\begin{equation*}
\mathcal{G}_\budget
:= \bigcap_{k=1}^{k_{\max}}
\left\llbracket\eqref{eq.convergence adasgd stochastic high prob} \text{ holds for } k\right\rrbracket
\cap \mathcal{E}_{\lfloor \budget/2 \rfloor}
\cap\left\llbracket R_1\geq r\right\rrbracket.
\end{equation*}
Since $\mathcal{E}_{\lfloor \budget/2 \rfloor}\cap\left\llbracket R_1\geq r\right\rrbracket$ is determined after Stage I, \Cref{fact.adasgd stochastic} and the law of total probability show that the probability that this event occurs and the bound for loop $k$ fails is at most $\delta_k$ times the probability of this event. A union bound gives
\begin{equation}
\label{eq.property_delta_prob}
\pr\!\left(\mathcal{G}_\budget^c\cap
\mathcal{E}_{\lfloor \budget/2 \rfloor}\cap\left\llbracket R_1\geq r\right\rrbracket\right)
\leq \sum_{k=1}^{k_{\max}}\delta_k\,
\pr\!\left(\mathcal{E}_{\lfloor \budget/2 \rfloor}\cap\left\llbracket R_1\geq r\right\rrbracket\right)
\leq\delta\,\pr\!\left(\mathcal{E}_{\lfloor \budget/2 \rfloor}\cap\left\llbracket R_1\geq r\right\rrbracket\right).
\end{equation}

On $\mathcal{E}_{\lfloor \budget/2 \rfloor}$, we have $\dist{\mathbf{0}}{X^\star}\leq R_1\leq4R$. Moreover, for $k\leq k_{\max}$, $3\budget/(\pi^2k^2)\geq1$, and hence
\[
T_k=\left\lfloor\frac{3\budget}{\pi^2k^2}\right\rfloor\geq\frac{3\budget}{2\pi^2k^2}.
\]
Therefore, on $\mathcal{G}_\budget$,
\begin{align*}
F(x_{1,\bar{k}}) - F^\star &\le_{(a)} \sum_{k=1}^{\bar{k}-1} \left(F(x_{1,k+1})-\min _{x \in\ball{x_{1,k},R_k}} F(x)\right) \le_{(b)} \frac{10\sqrt{2}\pi L R_1}{\sqrt{3\budget}}\sum_{k=1}^{\bar{k}-1}\frac{\sqrt{\log (2/\delta_k)}}{k^2}\\
&\leq \frac{40\sqrt{2}\pi L R}{\sqrt{3\budget}}H_\delta,
\end{align*}
where $(a)$ uses that $\mathcal{B}_{\budget,1}$ occurs on $\mathcal{E}_{\lfloor \budget/2\rfloor}$ and $(b)$ uses \eqref{eq.convergence adasgd stochastic high prob}, $R_k=R_1/k^3$, and the preceding lower bound on $T_k$. Also,
\begin{equation*}
H_{\delta} = \sum_{k=1}^{\infty} \frac{\sqrt{\log (2/\delta_k)}}{k^2} \leq  \sum_{k=1}^{\infty} \frac{\sqrt{(k+1)\log 2+\log (1/\delta)}}{k^2}
\leq\sum_{k=1}^{\infty} \frac{\sqrt{k+1}}{k^2}\sqrt{\log 2} + \sum_{k=1}^{\infty} \frac{\sqrt{\log (1/\delta)}}{k^2}< \infty.
\end{equation*}

The update formula of \Cref{alg:nested-adaptive} and the property of Apéry's constant~\citep{apery1979irrationalite} give $\|x_{1,k}\| \leq \sum_{j=1}^{\infty} R_j \leq 2R_1\leq8R$. Define $\hat{F}(x):=F(x)$ on $\ball{8R}$ and let $H^\star:=X^\star\cap\ball{8R}$. This set is nonempty because $\dist{\mathbf{0}}{X^\star}\leq R$. By \Cref{lem:monotone-error-bound} and the inclusion $H^\star\subseteq X^\star$,
\[
\phi_{8R}(\dist{x_{1,\bar{k}}}{X^\star})
\leq\phi_{8R}(\dist{x_{1,\bar{k}}}{H^\star})
\leq F(x_{1,\bar{k}})-F^\star.
\]
Under the lower bound on $\budget$ in the lemma statement, the preceding inequalities yield
\[
\phi_{8R}(\dist{x_{1,\bar{k}}}{X^\star})
\leq \frac{40\sqrt{2}\pi L R}{\sqrt{3\budget}}H_\delta
\leq\phi_{8R}(r/\bar{k}^3).
\]
Since $\phi_{8R}$ is strictly increasing and $r\leq R_1$ on $\mathcal{G}_\budget$, we obtain
\[
\dist{x_{1,\bar{k}}}{X^\star}\leq\frac{r}{\bar{k}^3}
\leq\frac{R_1}{\bar{k}^3}=R_{\bar{k}}.
\]
Thus $\mathcal{B}_{\budget,\bar{k}}$ occurs. Combining this implication with \eqref{eq.property_delta_prob} proves the claim.
\end{proof}

\Cref{thm.stochastic case expectation} shows that our method has an $o(n^{-1/2})$ asymptotic rate by separately controlling the contributions from $\mathcal{E}_{\lfloor \budget/2 \rfloor}$ and $\mathcal{E}_{\lfloor \budget/2 \rfloor}^c$.

\begin{theorem}\label{thm.stochastic case expectation}
Suppose \Cref{assume-nonsmooth-convex-optimization} holds. The final output $\xout{\budget}$ of \Cref{alg:nested-adaptive} satisfies
\[        
\lim_{\budget\to \infty} \budget^{1/2}\mathbb{E}\left[F(\xout{\budget}) - F^\star \right] =0,
\]
where $\budget$ is the total number of stochastic gradient evaluations that \Cref{alg:nested-adaptive} uses.
\end{theorem}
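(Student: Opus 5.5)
We will split the expectation according to whether Stage~I succeeds. Write $\mathcal{E}:=\mathcal{E}_{\lfloor \budget/2\rfloor}$ and
\[
\E[F(\xout{\budget})-F^\star]=\E[(F(\xout{\budget})-F^\star)\mathbf{1}_{\mathcal{E}^c}]+\E[(F(\xout{\budget})-F^\star)\mathbf{1}_{\mathcal{E}}].
\]

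\textbf{The failure event $\mathcal{E}^c$.} By \Cref{lem:iterate-bound}, $\|\xB\|\le 2L\Fbudget^{1/4}$, so $R_1\le 4L\budget^{1/4}$. Every outer iterate then satisfies $\|x_{1,k}\|\le 2R_1$, hence $\|\xout{\budget}\|\le 2R_1$. The $L$-Lipschitz property gives
\[
F(\xout{\budget})-F^\star\le L(\|\xout{\budget}\|+R)=O(\budget^{1/4}).
\]
So this term is at most $O(\budget^{1/4})\pr(\mathcal{E}^c)$. We apply \Cref{lem.xb converge non asym} with $\delta=\delta_\budget:=\budget^{-1}$. Because $\alpha<1/4$, the threshold $B_{\delta}$ grows only like $(\log\budget)^{c}$, so the hypothesis holds for large $\budget$. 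Then $\pr(\mathcal{E}^c)\le \budget^{-1}$, and $\budget^{1/2}\cdot\budget^{1/4}\cdot\budget^{-1}\to0$.

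\textbf{The success event $\mathcal{E}$.} We condition on Stage~I and on the outer loops, using the last ball $K:=\kn$ that contains a minimizer. Telescoping with the expectation bound \eqref{eq.convergence adasgd stochastic expectation}, and using $\min_{\ball{x_{1,k},R_k}}F\le F(x_{1,k})$ for $k>K$, gives
\[
\E[(F(\xout{\budget})-F^\star)\mathbf{1}_{\mathcal{E}}]\le \E\Bigl[\mathbf{1}_{\mathcal{E}}\sum_{k\ge K}\frac{20LR_k}{\sqrt{T_k}}\Bigr]\lesssim \frac{LR}{\sqrt{\budget}}\,\E\Bigl[\mathbf{1}_{\mathcal{E}}\sum_{k\ge K}k^{-2}\Bigr]\lesssim\frac{LR}{\sqrt\budget}\,\E[\mathbf{1}_{\mathcal{E}}/K].
\]
Here we used $R_k/\sqrt{T_k}\lesssim R_1 k^{-3}\cdot k/\sqrt{\budget}$ and $R_1\le 4R$ on $\mathcal{E}$. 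One subtlety is that $K$ depends on future randomness. We handle it by a union over $k$: on $\mathcal{E}$, $F(\xout{\budget})-F^\star\le\sum_{k\ge K}(\cdots)$, and each term's expectation is bounded by conditioning at loop $k$. Alternatively, we decompose on $\{K\ge\bar k\}$ versus its complement, bounding the latter crudely by $O(1/\sqrt\budget)$ times its probability.

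\textbf{Showing $\budget^{1/2}\E[\cdots\mathbf{1}_{\mathcal{E}}]\to0$.} Fix $\bar k$ and $\delta$. On $\mathcal{E}\cap\{K\ge\bar k\}$ the bound is $O(LR/(\bar k\sqrt\budget))$. The remaining event is $\mathcal{E}\cap\mathcal{B}^c_{\budget,\bar k}$. On $\mathcal{E}$ we have $R_1\ge \dist{\mathbf{0}}{X^\star}=:r_0$, where $r_0>0$ if $\mathbf{0}\notin X^\star$; the case $\mathbf{0}\in X^\star$ must be handled separately (see below). \Cref{thm.stochastic case} with $r=r_0$ then gives that this event has probability at most $\delta$ once $\budget$ exceeds a constant depending on $\bar k,\delta$, with $\bar k\le k_{\max}$ eventually. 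On it, the bound is $O(LR/\sqrt\budget)$. Hence
\[
\limsup_{\budget\to\infty}\budget^{1/2}\E[\cdots]\lesssim LR(1/\bar k+\delta),
\]
and letting $\bar k\to\infty$, $\delta\to0$ finishes the proof.

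\textbf{Main obstacle: the case $\mathbf{0}\in X^\star$.} Here $R_1$ may be tiny or zero, so \Cref{thm.stochastic case} has no positive $r$ to work with. I would treat $\{R_1<r\}$ directly: the balls satisfy $R_k\le R_1/k^3\le r$ and all iterates lie in $\ball{2r}$. If $\mathbf{0}\in X^\star$, the telescoped bound gives error $\lesssim LR_1/\sqrt\budget\le Lr/\sqrt\budget$, since starting at the minimizer makes the $k=1$ ball contain it. Choosing $r$ small makes this contribution negligible, and $\{R_1\ge r\}$ is handled as above. Making this measurability and conditioning bookkeeping rigorous is where I expect most of the care to go.
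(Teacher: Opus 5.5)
Your overall architecture is the paper's. You split on $\mathcal{E}_{\lfloor\budget/2\rfloor}$. On the complement you use polynomial growth times a small probability from \Cref{lem.xb converge non asym}; your choice $\delta=\budget^{-1}$, with $B_\delta$ only polylogarithmic, is a clean way to do this. On $\mathcal{E}_{\lfloor\budget/2\rfloor}$ you use telescoping with \eqref{eq.convergence adasgd stochastic expectation} from a late ball, \Cref{thm.stochastic case} for the event that the $\bar k$th ball misses $X^\star$, and a small-$R_1$ split. The paper uses $r=\varepsilon R$ uniformly; your $r=\dist{\mathbf{0}}{X^\star}$ when this is positive is also fine. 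The display with $\E[\mathbf{1}_{\mathcal{E}}/\kn]$ is not justified, but you do not need it. On $\{\kn\ge\bar k\}$, telescoping gives at most $\sum_{k\ge\bar k}\Delta_k$ with $\Delta_k:=F(x_{1,k+1})-\min_{\ball{x_{1,k},R_k}}F\ge0$. So the indicator of $\{\kn\ge\bar k\}$ can simply be dropped, and only the Stage~I event needs to be conditioned on.

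The gap is your treatment of $\mathcal{E}_{\lfloor\budget/2\rfloor}\cap\mathcal{B}^c_{\budget,\bar k}$ with $\delta$ fixed. You assert that on this event the error is $O(LR/\sqrt\budget)$, so its contribution is $\lesssim LR\delta$. Pointwise, the only bound available there is $F(\xout{\budget})-F^\star\le 9LR$, which does not decay.

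An in-expectation bound of order $LR/\sqrt\budget$ on this event would need \Cref{fact.adasgd stochastic} for the loops $k<\bar k$. But $\mathcal{B}^c_{\budget,\bar k}$ is determined by exactly those loops (through $x_{1,\bar k}$), which is the same non-predictability you flagged for $\kn$. Conditioning on this rare event can inflate $\Delta_k$ for $k<\bar k$. With the crude bound and fixed $\delta$ you only get $\sqrt\budget\cdot 9LR\cdot\delta$, which diverges, so $\limsup\lesssim LR(1/\bar k+\delta)$ does not follow.

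The repair, which is what the paper does, is to let $\delta$ depend on $\budget$. The threshold in \Cref{thm.stochastic case} depends on $\delta$ only through $H_\delta=O(\sqrt{\log(1/\delta)})$. So for fixed $\bar k$ and $r$ you may take $\delta_\budget=\budget^{-1}$ for all large $\budget$, and then $\sqrt\budget\cdot 9LR\cdot\delta_\budget\to0$. The same repair is needed for the $\{R_1\ge r\}$ part of your $\mathbf{0}\in X^\star$ case. Finally, add the one-line check $\sum_k T_k\le\budget/2$, so the algorithm indeed uses at most $\budget$ evaluations.
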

\begin{proof}
First, we show that the total number of stochastic gradient evaluations is at most $\budget$.
Note that the stochastic gradient is evaluated once in each inner iteration of \Cref{alg:nested-adaptive}. Therefore, the total number of oracle calls in the second stage is upper-bounded by $\sum_{k=1}^{k_{\max}} T_k \leq \sum_{k=1}^{k_{\max}} \left\lfloor \tfrac{\budget}{k^2}\cdot\tfrac{3}{\pi^2}\right\rfloor\leq \sum_{k=1}^{\infty} \tfrac{\budget}{k^2}\cdot\tfrac{3}{\pi^2} = \budget/2$.
The last line is from the property of Basel series~(\cite[Chapter 9]{aigner1999proofs}).
Additionally, we use $\lfloor \budget/2 \rfloor$ evaluations for stage 1.

To prove our result, we show two separate results which imply the desired bound:
\begin{flalign}
\lim_{\budget\to \infty} \budget^{1/2}\mathbb{E}\left[\bigl(F(\xout{\budget}) - F^\star\bigr)\mathbf{1}_{\mathcal{E}_{\lfloor \budget/2 \rfloor}^c}\right] =0
\label{eq.bad event expectation} \\
\lim_{\budget\to \infty} \budget^{1/2}\mathbb{E}\left[\bigl(F(\xout{\budget}) - F^\star\bigr)\mathbf{1}_{\mathcal{E}_{\lfloor \budget/2 \rfloor}}\right] =0.
\label{eq.good event expectation}
\end{flalign}
We begin by establishing \eqref{eq.bad event expectation}.
For all sufficiently large $\budget$, we have $\Fbudget^{-\alpha}\leq L$. For such $\budget$, observe
\begin{flalign*}
F(\xout{\budget}) - F^\star &= F(\xout{\budget}) - F(\mathbf{0}) + F(\mathbf{0}) - F^\star \le L \| \xout{\budget} \| + F(\mathbf{0}) - F^\star \\
&\le 4 L \| \xB \| + F(\mathbf{0}) - F^\star \le   8 L^2 \Fbudget^{1/4} + F(\mathbf{0}) - F^\star,
\end{flalign*}
where we used $\|\xout{\budget}\|\leq\sum_{k=1}^{k_{\max}}R_k\leq2R_1=4\|\xB\|$ and \Cref{lem:iterate-bound}.
Thus,
\[
\budget^{1/2 }\mathbb{E}\left[\bigl(F(\xout{\budget}) - F^\star\bigr)\mathbf{1}_{\mathcal{E}_{\lfloor \budget/2 \rfloor}^c}\right] \le \budget^{1/2} (8 L^2 \Fbudget^{1/4} + F(\mathbf{0}) - F^\star) \pr(\mathcal{E}_{\lfloor \budget/2 \rfloor}^c).
\]
By \Cref{lem.xb converge non asym}, $\pr(\mathcal{E}_{\lfloor \budget/2 \rfloor}^c)$ decays stretched-exponentially in $\budget$. Since the remaining factor in the preceding bound grows only polynomially, their product converges to zero, which establishes \eqref{eq.bad event expectation}.
It remains to establish \eqref{eq.good event expectation}.
Fix any $\varepsilon\in(0,1)$ and integer $\bar{k}\geq2$, and take $\budget$ sufficiently large that $\bar{k}\leq k_{\max}$.
On $\mathcal{E}_{\lfloor\budget/2\rfloor}$, the event $\mathcal{B}_{\budget,1}$ occurs. Thus, on $\mathcal{E}_{\lfloor\budget/2\rfloor}\cap\left\llbracket R_1<\varepsilon R\right\rrbracket$, telescoping from outer loop $1$ gives
\[
F(\xout{\budget})-F^\star
\leq\sum_{k=1}^{k_{\max}}
\left(F(x_{1,k+1})-\min_{x\in\ball{x_{1,k},R_k}}F(x)\right).
\]
The event $\mathcal{E}_{\lfloor\budget/2\rfloor}\cap\left\llbracket R_1<\varepsilon R\right\rrbracket$ is determined after Stage I. Thus, \eqref{eq.convergence adasgd stochastic expectation}, the law of total expectation, $R_k=R_1/k^3$, and the lower bound on $T_k$ give
\begin{equation*}
\sqrt{\budget}\,\mathbb{E}\!\left[
\bigl(F(\xout{\budget})-F^\star\bigr)
\mathbf{1}_{\mathcal{E}_{\lfloor\budget/2\rfloor}\cap\left\llbracket R_1<\varepsilon R\right\rrbracket}
\right]
\leq\frac{20\sqrt{2}\pi L\varepsilon R}{\sqrt{3}}
\sum_{k=1}^\infty\frac{1}{k^2}.
\end{equation*}

On $\mathcal{E}_{\lfloor\budget/2\rfloor}\cap\left\llbracket R_1\geq\varepsilon R\right\rrbracket\cap\mathcal{B}_{\budget,\bar{k}}$, telescoping from outer loop $\bar{k}$ gives
\[
F(\xout{\budget})-F^\star
\leq\sum_{k=\bar{k}}^{k_{\max}}
\left(F(x_{1,k+1})-\min_{x\in\ball{x_{1,k},R_k}}F(x)\right).
\]
The events $\left\llbracket R_1\geq\varepsilon R\right\rrbracket$ and $\mathcal{B}_{\budget,\bar{k}}$ are determined before loop $\bar{k}$ begins. Thus, \eqref{eq.convergence adasgd stochastic expectation}, the law of total expectation, $R_1\leq4R$ on $\mathcal{E}_{\lfloor\budget/2\rfloor}$, and the lower bound on $T_k$ give
\begin{equation*}
\sqrt{\budget}\,\mathbb{E}\!\left[
\bigl(F(\xout{\budget})-F^\star\bigr)
\mathbf{1}_{\mathcal{E}_{\lfloor\budget/2\rfloor}\cap\left\llbracket R_1\geq\varepsilon R\right\rrbracket\cap\mathcal{B}_{\budget,\bar{k}}}
\right]
\leq\frac{80\sqrt{2}\pi L R}{\sqrt{3}}
\sum_{k=\bar{k}}^\infty\frac{1}{k^2}.
\end{equation*}
On the other hand, on $\mathcal{E}_{\lfloor\budget/2\rfloor}$, we have $\|\xout{\budget}\|\leq2R_1\leq8R$ and hence
\[
F(\xout{\budget})-F^\star\leq L\bigl(\|\xout{\budget}\|+\dist{\mathbf{0}}{X^\star}\bigr)\leq9LR.
\]
By \Cref{thm.stochastic case}, with $r=\varepsilon R$, the probability of the remaining failure event decays exponentially in $\budget$ for every fixed $\varepsilon$ and $\bar{k}$. Since $\sqrt{\budget}$ grows only polynomially and $F(\xout{\budget})-F^\star\leq9LR$ on $\mathcal{E}_{\lfloor\budget/2\rfloor}$, it follows that
\begin{equation*}
\sqrt{\budget}\,\mathbb{E}\!\left[
\bigl(F(\xout{\budget})-F^\star\bigr)
\mathbf{1}_{\mathcal{E}_{\lfloor\budget/2\rfloor}\cap\left\llbracket R_1\geq\varepsilon R\right\rrbracket\cap\mathcal{B}_{\budget,\bar{k}}^c}
\right]
\longrightarrow0.
\end{equation*}
The following three events form a partition of $\mathcal{E}_{\lfloor\budget/2\rfloor}$. Hence, the law of total expectation gives
{
\begin{align*}
&\mathbb{E}\!\left[\bigl(F(\xout{\budget})-F^\star\bigr)
\mathbf{1}_{\mathcal{E}_{\lfloor\budget/2\rfloor}}\right]\\
&\quad ={}\mathbb{E}\!\left[\bigl(F(\xout{\budget})-F^\star\bigr)
\mathbf{1}_{\mathcal{E}_{\lfloor\budget/2\rfloor}\cap\left\llbracket R_1<\varepsilon R\right\rrbracket}\right]\\[-1pt]
&\qquad +\mathbb{E}\!\left[\bigl(F(\xout{\budget})-F^\star\bigr)
\mathbf{1}_{\mathcal{E}_{\lfloor\budget/2\rfloor}\cap\left\llbracket R_1\geq\varepsilon R\right\rrbracket\cap\mathcal{B}_{\budget,\bar{k}}}\right]\\[-1pt]
&\qquad +\mathbb{E}\!\left[\bigl(F(\xout{\budget})-F^\star\bigr)
\mathbf{1}_{\mathcal{E}_{\lfloor\budget/2\rfloor}\cap\left\llbracket R_1\geq\varepsilon R\right\rrbracket\cap\mathcal{B}_{\budget,\bar{k}}^c}\right].
\end{align*}
}
Consequently,
\[
\limsup_{\budget\to\infty}
\sqrt{\budget}\,\mathbb{E}\!\left[
\bigl(F(\xout{\budget})-F^\star\bigr)
\mathbf{1}_{\mathcal{E}_{\lfloor\budget/2\rfloor}}
\right]
\leq\frac{20\sqrt{2}\pi L\varepsilon R}{\sqrt{3}}
\sum_{k=1}^\infty\frac{1}{k^2}
+\frac{80\sqrt{2}\pi L R}{\sqrt{3}}
\sum_{k=\bar{k}}^\infty\frac{1}{k^2}.
\]
Letting $\bar{k}\to\infty$ and then $\varepsilon\downarrow0$ establishes \eqref{eq.good event expectation}.
\end{proof}

\begin{remark}
\Cref{alg:nested-adaptive} requires a budget, $\budget$, for the number of stochastic gradient evaluations as input; however, using the doubling trick of \citet[Section~2.3.1]{shalev2025online}, one can readily convert it into an anytime algorithm.
\end{remark}

\section{A tight asymptotic lower bound for stochastic convex optimization}\label{sec.lower bound}
For standard nonasymptotic bounds for stochastic convex optimization, we construct a hard instance of the form
\[
    f(x;1) := \abs{x-1}, \quad f(x;-1) := \abs{x+1}, \quad \text{where} \quad \pr(S = \pm 1) = \frac{1 \pm \epsilon v}{2}.
\]
Any algorithm that can find an $\epsilon$-approximately optimal solution to this problem, with success probability greater than $1/5$, also implicitly provides an estimator of $v$. Setting $\epsilon \approx 1/\sqrt{\budget}$ and choosing $v$ to be a Rademacher random variable $V$, i.e., (taking the value of $1$ or $-1$ with equal chance) gives a problem which one can show requires at least $\budget$ samples $S\ind{1}, \dots, S\ind{\budget}$ to estimate $V$ with success probability greater than $1/5$ \citep{carmon2024price,agarwal2009information}. Unfortunately, this approach fundamentally depends on a priori knowledge of $\budget$ to decide the problem instances. Once the algorithm exceeds this budget, better convergence rates, even exponentially fast convergence rates \citep{davis2024stochastic}, can be achieved on this instance.

To provide an asymptotic lower bound we need a construction that does not require $\budget$ as input.
Roughly speaking, our approach involves constructing an infinite sequence of hard problems with larger and larger $\budget$, all nested into a single one-dimensional problem instance. The nesting is determined by a sequence of nested intervals, defined as follows.
Let $a_1 := -1, b_1:=-1/2, c_1:=1/2, d_1 := 1$ and for $i > 1$ let
\begin{equation}\label{define:A_i-B_i-C_i-D_i}
\begin{aligned}
a_i &:= \begin{cases} 
a_{i-1} & \text{if } v_{i-1} = -1 \\
c_{i-1} & \text{if } v_{i-1} = 1,
\end{cases} \quad\quad\quad\quad & d_i &:= \begin{cases} 
b_{i-1} & \text{if } v_{i-1} = -1 \\
d_{i-1} & \text{if } v_{i-1} = 1,
\end{cases}  \\
b_i &:= \frac{3}{4} a_i + \frac{1}{4} d_i, & c_i &:= \frac{1}{4} a_i + \frac{3}{4} d_i.
\end{aligned}
\end{equation}
Later, in the proof of our main result, when $v_i$ is replaced with a random variable $V_i$ we will replace $a_i$, $b_i$, $c_i$ and $d_i$ with $A_i$, $B_i$, $C_i$, $D_i$ to signify that they become random variables.

Our sample functions comprise of the following piecewise linear convex functions,
\begin{flalign}\label{eq:define-f_i}
f_i(x;a_i,b_i,c_i,d_i,s_i) := \begin{cases} 
0 & s_i = 0 \\
-x+a_i &s_i = -1 ~~\&~~ x<a_i \\
\frac{1-\epsilon_i}{1+\epsilon_i}(x-a_i) & s_i = -1 ~~\&~~ a_i\leq x< b_i \\
x-\frac{2\epsilon_i}{1+\epsilon_i}b_i-\frac{1-\epsilon_i}{1+\epsilon_i}a_i & s_i = -1 ~~\&~~ b_i \le x \\
-x+\frac{2\epsilon_i}{1+\epsilon_i}c_i+\frac{1-\epsilon_i}{1+\epsilon_i}d_i & s_i=1 ~~~~~\&~~~  x<c_i \\
-\frac{1-\epsilon_i}{1+\epsilon_i}(x-d_i) & s_i = 1 ~~~~~\&~~~  c_i\leq x<d_i \\
x-d_i & s_i = 1 ~~~~~\&~~~ d_i \le x.
\end{cases}
\end{flalign}
To construct our instance, we parameterize a family of three-valued distributions:
\begin{equation}
%\label{eq.parameterized three-valued}
S \sim \threval(w,\epsilon, v),
\end{equation}
where $v \in \{-1, 1\}$, $w \in (0,1)$, $\epsilon \in (0,1)$, with 
\begin{equation*}
\pr(S=-1 ) = w \times \frac{1-\epsilon v}{2}, \quad \pr(S=0) = 1-w, \quad \pr(S=1) = w \times \frac{1+\epsilon v}{2}.
\end{equation*}
With abuse of notation let 
\[
f_i(x;Q_{1:i-1},S_i) := f_i(x;a_i,b_i,c_i,d_i,S_i)
\]
where for all $i \in \NN$,
\begin{subequations}\label{eq:S-i-Q-i-combined-distribution}
\begin{flalign}
S_i &\sim \threval\left(w_i,\epsilon_i,v_i\right) \label{eq:S_i-distribution} \\
Q_i &:= \begin{cases}
v_i & \text{ if } \exists j > i \text{ s.t. } S_{j} \neq 0 \\
0 & \text{otherwise}.
\end{cases}
\end{flalign}
\end{subequations}
Further, define the expectation of the $i$-th sample function as
\begin{equation}\label{eq:ith-combined-function}
F_i(x; v) := \mathbb{E}[f_i(x;Q_{1:i-1},S_i)] 
= w_i\left[\frac{1+v_i\epsilon_i}{2}f_i(x;a_i,b_i,c_i,d_i,1) + \frac{1-v_i\epsilon_i}{2}f_i(x;a_i,b_i,c_i,d_i,-1)\right].
\end{equation}
We visualize $f_i$ and $F_i$ in \Cref{fig:main_figure}.

\begin{figure}[!ht]
    \centering
    \begin{minipage}[b]{0.48\textwidth}
        \centering
        \includegraphics[width=\textwidth]{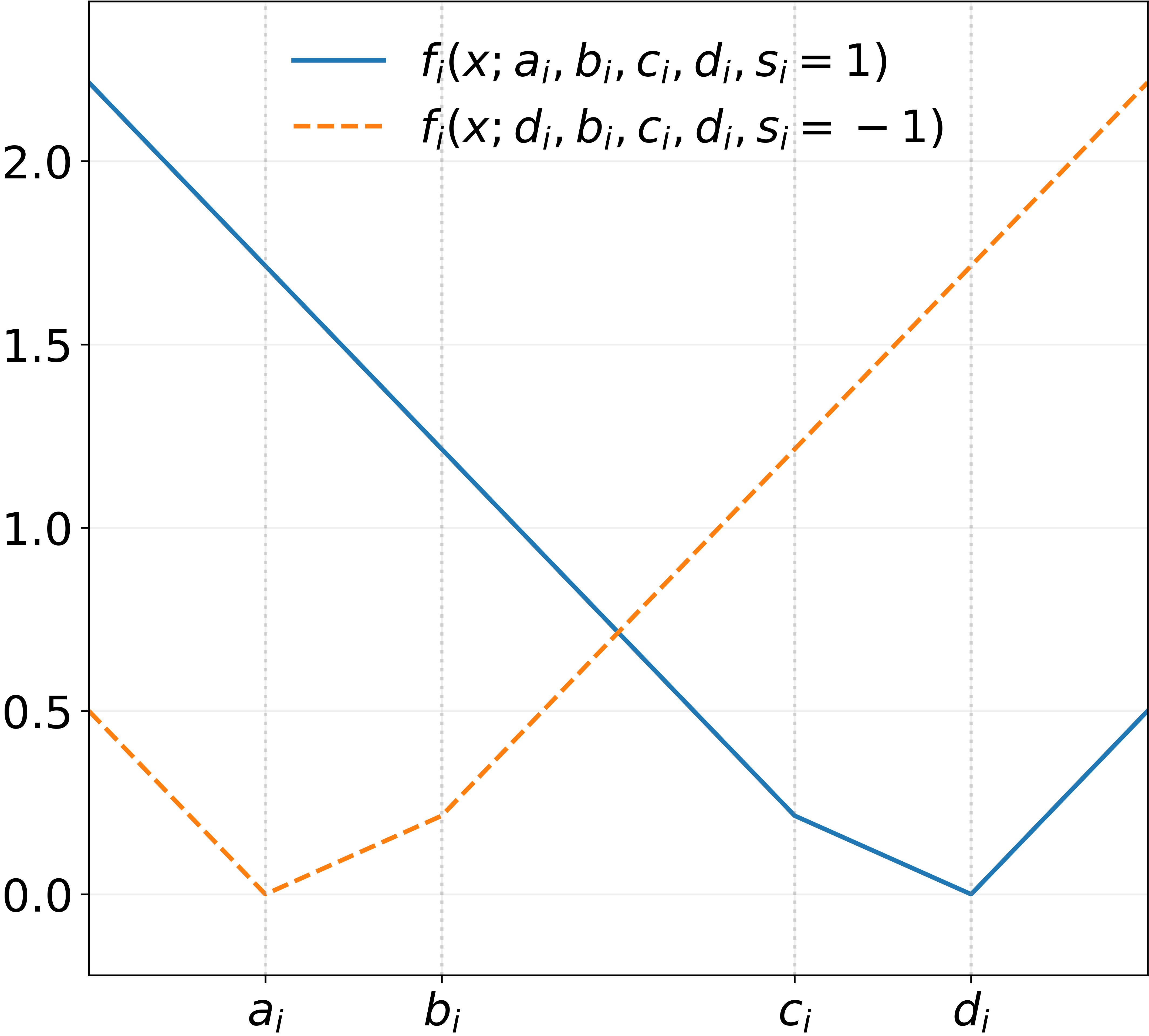}
       % \label{fig:sample}
    \end{minipage}
    % \hfill
    \begin{minipage}[b]{0.48\textwidth}
        \centering
        \includegraphics[width=\textwidth]{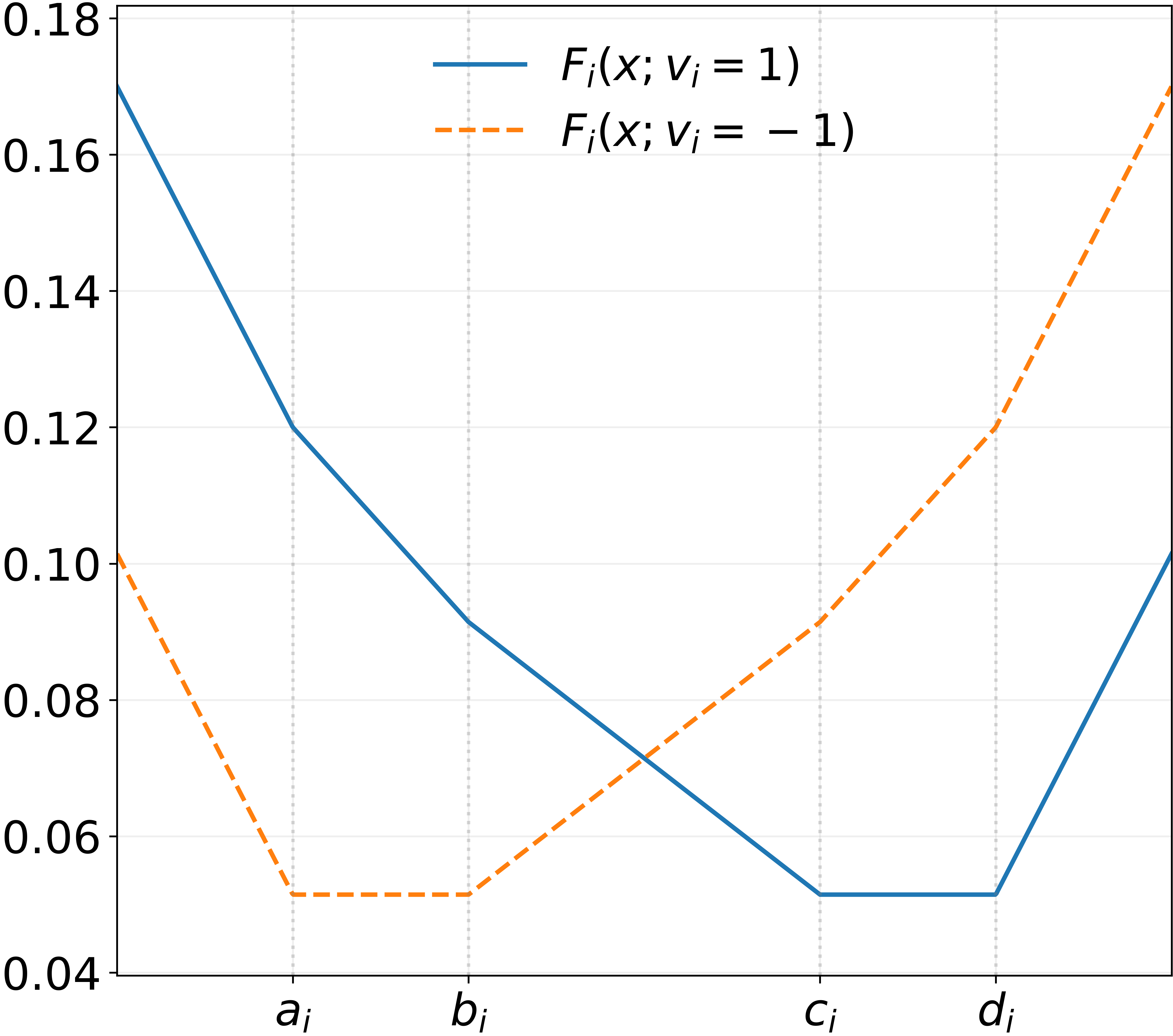}
       % \label{fig:combined}
    \end{minipage}
    \caption{Plot of $i$th sample function $f_i(x,a_i,b_i,c_i,d_i,s_i)$ for $s_i=-1$ and $s_i=1$ (left), and its expectation, $F_i(x,v_i)$ for $v_i=-1$ and $v_i=1$ (right), with $\epsilon=0.4$, $w=0.1$. Note that $F_i(x;  1)$ is flat on the interval $[a_i,b_i]$ and $F_i(x; -1)$ is flat on the interval $[c_i,d_i]$.}
    \label{fig:main_figure}
\end{figure}
\noindent Denote the optimal value and the interval of optimal solutions to the expectation of the $i$th sample function as
\[
F_i^\star(v) := \inf_{x\in\mathbb{R}} F_i(x;v)
\quad\text{and}\quad
X_i^\star(v) := \argmin_{x\in\mathbb{R}} F_{i}(x; v).
\]
Now, we introduce the formal construction of the hard instance:
\[
f(x; Q, S) := \sum_{i=1}^{\infty} 2^{-i} f_i(x; Q_{1:i-1}, S_i) 
\]
and thus 
\begin{flalign}\label{eq:F-x-V-formula}
F_{f,P_v}(x)=\E_{Q,S \sim P_v}\left[ \sum_{i=1}^{\infty}2^{-i} f_i(x; Q_{1:i-1}, S_i) \right] = \sum_{i=1}^{\infty}2^{-i} F_{i}(x; v)
\end{flalign}
where $P_v$ denotes our distribution over $Q$ and $S$, i.e., \eqref{eq:S-i-Q-i-combined-distribution}. We visualize $F_{f,P_v}$ in \Cref{fig:zoom}.

\begin{figure}[ht]
    \centering
    \includegraphics[width=\textwidth]{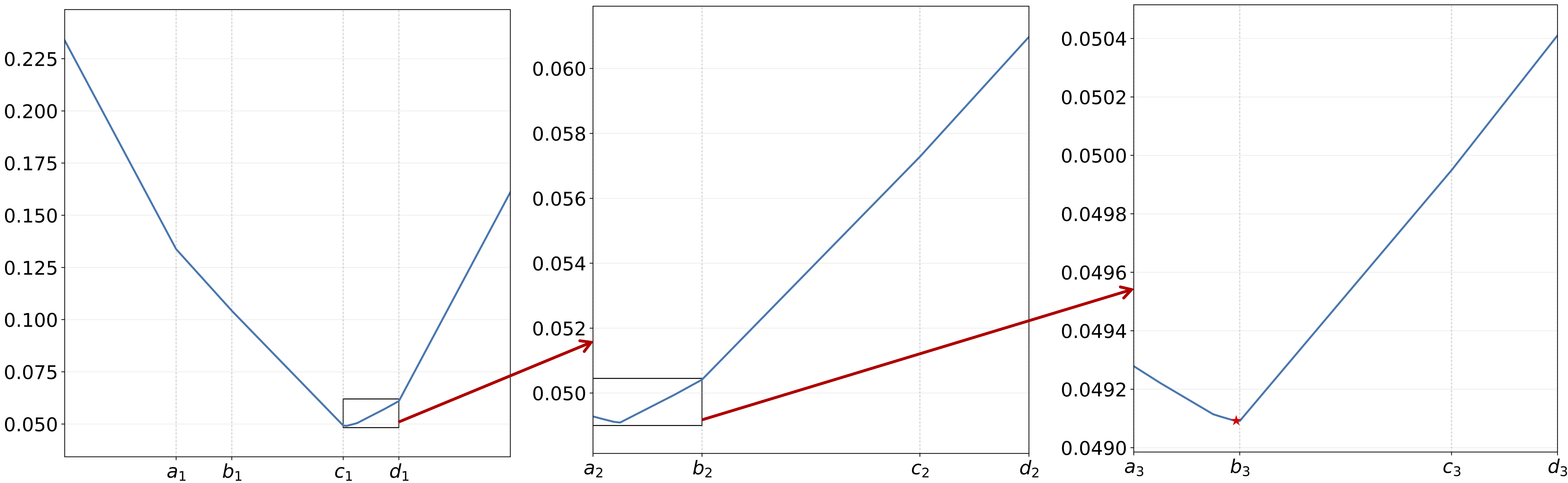}
    \vspace{-1.5em}
    \caption{Visualization of the hard instance
    $F_{f,P_{v}}(x)$, truncated at $i=10$
    ($v_2=v_3=0$, $v_i=1$ for $i\neq 2,3$):
    full view (left), zoom-in on $[c_1,d_1]$ (middle), zoom-in on $[a_2,b_2]$ (right).
    }
    \label{fig:zoom}
\end{figure}

We examine the basic properties of our construction in~\Cref{lem:basic-properties-of-construction}, which shows the suboptimality gap for each $F_i(x;v)$ and establishes that the hard instance is valid in the sense that it satisfies~\Cref{assume-nonsmooth-convex-optimization}.
\begin{lemma}[Basic properties of our construction]\label{lem:basic-properties-of-construction}
Let $x \in \R$, $v \in \{-1, 1 \}^{\NN}$ and $w_i, \epsilon_i \in (0,1)$. Then, for all $i \in \NN$,
\begin{equation}
\label{eq.suboptimality Fi}
F_{i}(x; v)-F_i^\star(v) \geq w_i\epsilon_i \dist{x}{X_i^\star(v)},
\end{equation}
and
\begin{flalign}\label{eq:ith-optimal-set-form}
X_i^\star(v) = \begin{cases}
[a_i, b_i] & v_i = -1 \\
[c_i, d_i] & v_i = 1. 
\end{cases}
\end{flalign}
Additionally, the problem instance $(f,P_v)$ satisfies \Cref{assume-nonsmooth-convex-optimization} with $R=L=1$.
\end{lemma}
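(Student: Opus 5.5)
The plan is a direct piecewise-linear computation on $F_i(\cdot;v)$, followed by term-by-term bounds for the infinite sum.

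\emph{Step 1: slopes of $F_i(\cdot;v)$ on each interval.} Write $\kappa_i := \frac{1-\epsilon_i}{1+\epsilon_i}$. By \eqref{eq:define-f_i}:
\begin{itemize}
\item $f_i(\cdot;\cdot,-1)$ has slopes $-1,\ \kappa_i,\ 1$ on $(-\infty,a_i),\ [a_i,b_i),\ [b_i,\infty)$.
\item $f_i(\cdot;\cdot,1)$ has slopes $-1,\ -\kappa_i,\ 1$ on $(-\infty,c_i),\ [c_i,d_i),\ [d_i,\infty)$.
\end{itemize}
Both are continuous; I will confirm this by checking the values at $b_i$ and at $c_i$. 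Since $a_i<b_i<c_i<d_i$, the derivative of $F_i/w_i$ equals $\frac{1+v\epsilon}{2}s_+ + \frac{1-v\epsilon}{2}s_-$, where $s_\pm$ are the slopes of the $S=\pm1$ pieces.

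Take $v_i=-1$. The slopes of $F_i/w_i$ on the five intervals are as follows.
\begin{itemize}
\item On $(-\infty,a_i)$ the slope is $-1$.
\item On $[a_i,b_i)$ it is $-\frac{1-\epsilon}{2}+\frac{1+\epsilon}{2}\kappa_i = -\frac{1-\epsilon}{2}+\frac{1-\epsilon}{2}=0$.
\item On $[b_i,c_i)$ it is $-\frac{1-\epsilon}{2}+\frac{1+\epsilon}{2}=\epsilon$.
\item On $[c_i,d_i)$ it is $-\frac{1-\epsilon}{2}\kappa_i+\frac{1+\epsilon}{2}$, which is at least $\epsilon$.
\item On $[d_i,\infty)$ it is $1$.
\end{itemize}
Hence $F_i$ is convex, flat exactly on $[a_i,b_i]$, and has slope of magnitude at least $w_i\epsilon_i$ outside that interval. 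This gives \eqref{eq:ith-optimal-set-form}. Integrating the slope bound from the nearest endpoint of $X_i^\star$ gives \eqref{eq.suboptimality Fi}. The case $v_i=1$ is symmetric under the reflection $x\mapsto -x$ (with $a\leftrightarrow -d$, $b\leftrightarrow -c$), or by the same computation.

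\emph{Step 2: \Cref{assume-nonsmooth-convex-optimization}.}
\begin{itemize}
\item \emph{Convexity.} Each $f_i(\cdot;\cdot,s)$ is convex: its slopes are nondecreasing and it is continuous.
\item \emph{Lipschitz constant.} Each $f_i(\cdot;\cdot,s)$ is $1$-Lipschitz, since $\kappa_i\le1$. The series $f=\sum_i 2^{-i} f_i$ is therefore convex and $\sum_i 2^{-i}=1$-Lipschitz. I will check pointwise convergence of the series: since $a_i,d_i\in[-1,1]$ and each $f_i$ vanishes at some point of $[-1,1]$ (at $a_i$ or $d_i$), we get $|f_i(x)|\le |x|+2$, so the series converges.
\item \emph{Location of a minimizer.} By \eqref{eq:F-x-V-formula}, $F=\sum_i 2^{-i}F_i$. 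The intervals are nested: by \eqref{define:A_i-B_i-C_i-D_i}, $[a_{i+1},d_{i+1}]$ equals $X_i^\star(v)$. So $X_i^\star(v)\supseteq [a_{i+1},d_{i+1}]\supseteq X_{i+1}^\star(v)$, and these are decreasing nonempty compact intervals. Their intersection contains a point $x^\star\in[-1,1]$ that simultaneously minimizes every $F_i$, hence minimizes $F$. Therefore $\dist{0}{X^\star}\le 1$, so $R=1$.
\end{itemize}

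\emph{Main obstacle.} I expect the main care point to be the slope computation on $[c_i,d_i)$ for $v_i=-1$ (and its mirror case), where I need a lower bound of $\epsilon_i$ and not merely positivity. Writing $\kappa_i$ explicitly, the slope is $\frac{1+\epsilon}{2}-\frac{(1-\epsilon)^2}{2(1+\epsilon)}$, which equals $\frac{4\epsilon}{2(1+\epsilon)}=\frac{2\epsilon}{1+\epsilon}\ge\epsilon$. A second care point is checking the continuity of the constant offsets in \eqref{eq:define-f_i}, for example at $x=b_i$: $\kappa_i(b_i-a_i) = b_i - \frac{2\epsilon}{1+\epsilon}b_i - \kappa_i a_i$ holds since $1-\frac{2\epsilon}{1+\epsilon}=\kappa_i$.
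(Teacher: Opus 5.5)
Your proposal is correct and takes essentially the same route as the paper. Both compute $F_i(\cdot;-1)$ explicitly as a piecewise-linear function; your slopes $-1,0,\epsilon_i,\frac{2\epsilon_i}{1+\epsilon_i},1$ of $F_i/w_i$ match the paper's formula and give the optimal set and the $w_i\epsilon_i$ growth bound. Both then use nested intervals for a common minimizer in $[-1,1]$ and term-by-term $1$-Lipschitzness. The one minor difference is that the paper also uses the shrinking lengths to obtain a unique minimizer, whereas you only show existence, which is all the stated assumption requires.
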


\begin{proof}
    For the proof of \eqref{eq.suboptimality Fi} and \eqref{eq:ith-optimal-set-form}, we only consider the case where $v_i = -1$, since the case where $v_i=1$ follows by symmetry. 
    From \eqref{eq:define-f_i}, \eqref{eq:S-i-Q-i-combined-distribution}, and \eqref{eq:ith-combined-function}, we have
\[
F_i(x;-1) = w_i
\begin{cases}
    \begin{aligned}
        -x &+ \frac{1-\epsilon_i}{2} \left( \frac{2\epsilon_i}{1+\epsilon_i}c_i + \frac{1-\epsilon_i}{1+\epsilon_i}d_i \right) \\
        &+ \frac{1+\epsilon_i}{2}a_i,
    \end{aligned} 
    & x < a_i, \\[1.5em]
    \frac{1-\epsilon_i}{2} \left( \frac{2\epsilon_i}{1+\epsilon_i}c_i + \frac{1-\epsilon_i}{1+\epsilon_i}d_i - a_i \right), 
    & a_i \le x < b_i, \\[1.2em]
    \begin{aligned}
        \epsilon_i x &+ \frac{1-\epsilon_i}{2} \left( \frac{2\epsilon_i}{1+\epsilon_i}c_i + \frac{1-\epsilon_i}{1+\epsilon_i}d_i \right) \\
        &- \frac{1+\epsilon_i}{2} \left( \frac{2\epsilon_i}{1+\epsilon_i}b_i + \frac{1-\epsilon_i}{1+\epsilon_i}a_i \right),
    \end{aligned} 
    & b_i \le x < c_i, \\[2.2em]
    \begin{aligned}
        \frac{2\epsilon_i}{1+\epsilon_i}x &- \frac{1+\epsilon_i}{2} \left( \frac{2\epsilon_i}{1+\epsilon_i}b_i + \frac{1-\epsilon_i}{1+\epsilon_i}a_i \right) \\
        &+ \frac{(1-\epsilon_i)^2}{2(1+\epsilon_i)}d_i,
    \end{aligned} 
    & c_i \le x < d_i, \\[2.2em]
    \begin{aligned}
        x &- \frac{1+\epsilon_i}{2} \left( \frac{2\epsilon_i}{1+\epsilon_i}b_i + \frac{1-\epsilon_i}{1+\epsilon_i}a_i \right) \\
        &- \frac{1-\epsilon_i}{2}d_i,
    \end{aligned} 
    & d_i \le x.
\end{cases}
\]
    This shows that $F_i(x;-1)$ is piecewise linear and convex; hence its subdifferential can be characterized explicitly at every point.
    In particular, for any $x\in [a_i,b_i]$, the subdifferential contains $0$, which immediately yields \eqref{eq:ith-optimal-set-form}. Moreover, \eqref{eq.suboptimality Fi} follows by applying the subgradient inequality
    \[
    F_i(x;-1) \geq F_i(y;-1) + g_i(y;-1)\cdot(x-y), \quad g_i(y;-1) \in \partial F_i(y;-1),
    \]
    with $y=\Pi_{X_i^\star(v)}(x)$ and $g_i(y;-1) = \arg\max_{g\in \partial F_i(y;-1)} |g|$ using that $g_i(a_i; -1) = -w_i$ and $g_i(b_i; -1) = w_i \epsilon_i$.

    By \Cref{define:A_i-B_i-C_i-D_i} and \eqref{eq:ith-optimal-set-form}, $X_{i+1}^\star(v) \subseteq X_{i}^\star(v)$ and \[\text{len}(X_{i+1}^\star(v)) = \frac{1}{4} \text{len}(X_{i}^\star(v)).\] Thus, $\lim_{i\to \infty} \text{len}(X_i^\star(v))=0$ because the initial interval is bounded.

    Then we can apply the Nested Interval Theorem~\cite[Theorem 2.38]{rudin1976principles}, i.e., there exists a unique point $x^\star \in \bigcap_{i=1}^{\infty} X_i^\star(v)$. Also from the \eqref{eq:F-x-V-formula}, we conclude that $x^\star$ is the unique minimizer of $F_{f,P_v}$~($\|x^\star\| \leq 1$ since $X_1^\star(v) = [-1,1]$). 
    
    In addition, for any $q,s$ in the sample space defined in \eqref{eq:S-i-Q-i-combined-distribution} and any $x,y\in \R$, we have
    \begin{equation*}
        \begin{aligned}
        \left| f(x;q,s)-f(y;q,s)\right| &= \left|\sum_{i=1}^\infty 2^{-i} \left(f_i(x;q_{1:i-1},s_i)-f_i(y;q_{1:i-1},s_i)\right)\right|\\
        &\le \sum_{i=1}^\infty 2^{-i} \left|f_i(x;q_{1:i-1},s_i)-f_i(y;q_{1:i-1},s_i)\right| \le_{(a)} \sum_{i=1}^\infty 2^{-i}|x-y| \\
        &= |x-y| \sum_{i=1}^\infty 2^{-i} = |x-y|
        \end{aligned}
    \end{equation*}
    $(a)$ is from the definition in \eqref{eq:define-f_i}. Therefore we verified \Cref{assume-nonsmooth-convex-optimization} holds.
\end{proof}

% Now we are ready to show how to pick $\epsilon_i$ and $w_i$ such that any stochastic algorithm will perform poorly over the hard instance we construct.

Before presenting our lower bound, we formally define a stochastic optimization method.
Our definition is very broad and encompasses methods that have unrestricted access to the sample functions $f(\cdot ; \cdot)$ but 
observe $P$ only through the samples $(S\ind{1}, Q\ind{1}), \dots, (S\ind{\budget}, Q\ind{\budget}) \simIID P$. More precisely, the output of a stochastic optimization method after observing $\budget$ samples is
\begin{flalign}\label{eq:measurable-mapping}
\xout{\budget} = \text{A}_{\budget}\big( f, U, (S\ind{1}, Q\ind{1}), \dots, (S\ind{\budget}, Q\ind{\budget}) \big)
\end{flalign}
where $\text{A}_\budget$ is some measurable mapping to $\R^{d}$ and $U \sim \mathsf{Unif}([0,1])$ is independent of both $S\ind{1}, \dots, S\ind{\budget}$ and $Q\ind{1}, \dots , Q\ind{\budget}$, enabling additional randomization.

\Cref{thm.lower bound} shows that for any arbitrarily slowly decreasing bijection from $[1, \infty) \rightarrow (0,1]$, we can generate a distribution over problem instances $(f,P_V)$ where $V$ is an infinite sequence of random variables and that is asymptotically hard for any stochastic optimization method to optimize. If the method knew $V$, this would be straightforward, since it could reconstruct $F_{f,P_V}$ given $V$ and is allowed arbitrary computational resources to optimize it. However, as implied by \eqref{eq:measurable-mapping}, the method can only estimate $V$ through $S\ind{1}, \dots, S\ind{\budget}$ and $Q\ind{1}, \dots, Q\ind{\budget}$.

The proof of \Cref{thm.lower bound}
generates a distribution over instances $(f,P_V)$ by letting each $V_i$ be a Rademacher random variable (i.e., equal to $-1$ or $1$ with equal probability).
We imagine that we have an efficient method for minimizing $F_{f,P_V}$ and then use that method to generate an estimator for $V_i$ based on the idea that (approximate) knowledge $X_i^\star(V)$ implies knowledge of $V_i$.
To complete the proof we use a lemma that we prove in \Cref{sec:hardness-of-estimation} on the hardness of estimating the random variable $V_i$ given the information provided from observing $S\ind{1}_i, \dots, S_i\ind{\budget}$ and $Q\ind{1}_i, \dots, Q_i\ind{\budget}$. We can focus on these random variables because the remaining random variables do not provide any additional useful information for estimating $V_i$.

\begin{theorem}[Asymptotic lower bound on constant probability guarantees]
\label{thm.lower bound}
Let $\littleFunc : [1,\infty) \rightarrow (0,1]$ be a decreasing bijection.
Then,
there exists a distribution over 
problem instances, $(f,P_V)$, with each realization satisfying \Cref{assume-nonsmooth-convex-optimization} for $L=R=1$, and a sequence $\budget_1, \budget_2, \dots$ with $\lim_{k \rightarrow \infty} \budget_k = \infty$, such that for any stochastic optimization method that samples $\budget_i$ functions from $(f,P_V)$ and outputs $\xout{\budget_i}$,
\[
\pr\bigg( F_{f,P_{V}}(\xout{\budget_i})- F_{f,P_{V}}^\star \ge \frac{\littleFunc(\budget_i)}{\sqrt{\budget_i}} \bigg) \ge 1/6 \quad \forall i \in \NN.
\]
\end{theorem}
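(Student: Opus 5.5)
We take $V_1,V_2,\dots$ i.i.d.\ Rademacher, independent of the algorithm's seed $U$, and build the budgets $\budget_i$ and parameters $(w_i,\epsilon_i)$ so that block $i$ is a standard "estimate a biased coin from about $1/\epsilon_i^2$ effective samples" problem. We will choose $\epsilon_i$ and $\budget_i$ so that the number of informative samples of block $i$ among $\budget_i$ draws is about $w_i\budget_i\lesssim \epsilon_i^{-2}$. Then no estimator recovers $V_i$ with probability much above $1/2$. This is the hardness-of-estimation lemma deferred to \Cref{sec:hardness-of-estimation}, which we use as a black box: from $S_i^{(1)},\dots,S_i^{(\budget)}$ with $w_i\budget\epsilon_i^2\le c$, any estimator $\hat V_i$ has $\pr(\hat V_i\ne V_i)\ge 1/3$, say.

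Concretely, we will set $\epsilon_i:=c'/\sqrt{w_i\budget_i}$ and leave $w_i$ free. The $w_i$ are the knob that produces the $\littleFunc$ factor. Assume the algorithm outputs $\xout{\budget_i}$ with $F_{f,P_V}(\xout{\budget_i})-F^\star<\littleFunc(\budget_i)/\sqrt{\budget_i}$. The intervals $X_i^\star(-1)=[a_i,b_i]$ and $X_i^\star(1)=[c_i,d_i]$ are separated by a gap of length $\tfrac12\mathrm{len}(X_i^\star)=\tfrac12 4^{-(i-1)}\cdot 2$. The unique minimizer $x^\star$ lies in $X_i^\star(V)$, and every summand $2^{-j}(F_j-F_j^\star)$ is nonnegative. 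So by \eqref{eq.suboptimality Fi},
\[
F_{f,P_V}(x)-F^\star\ \ge\ 2^{-i}w_i\epsilon_i\,\dist{x}{X_i^\star(V)} .
\]
Hence if $x$ is in or closer to the wrong interval, the gap is at least $2^{-i}w_i\epsilon_i\cdot 4^{-i}$ up to constants, which equals $c''2^{-3i}\sqrt{w_i}/\sqrt{\budget_i}$. We pick $\budget_i$ large enough (increasing in $i$) that $\littleFunc(\budget_i)\le c''2^{-3i}\sqrt{w_i}$. This is possible because $\littleFunc$ is a decreasing bijection onto $(0,1]$, exactly as in \eqref{eq:hilbert-budget-sequence}. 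With this choice, small suboptimality forces $\hat V_i:=$ "the side of the midpoint of the gap on which $\xout{\budget_i}$ lies" to equal $V_i$. So $\pr(\text{small gap})\le\pr(\hat V_i=V_i)\le 5/6$, giving the claimed $1/6$.

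The main obstacle is justifying that $\hat V_i$ is an estimator of $V_i$ using only block-$i$ information. The algorithm also sees $S_j^{(t)}$ and $Q_j^{(t)}$ for $j\ne i$. The variables $Q_j$ with $j<i$ reveal $V_j$, and these determine $a_i,\dots,d_i$ but carry no information about $V_i$. Blocks $j>i$ depend on $V_i$ only through $a_j,\dots,d_j$. The design of $Q$ fixes this: $Q_i^{(t)}$ equals $V_i$ exactly when some deeper $S_j^{(t)}\ne0$. So the samples leak $V_i$ directly unless $S_j^{(t)}=0$ for all $j>i$ and all $t\le\budget_i$.

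We therefore choose $w_j$ decaying fast enough that $\sum_{j>i}w_j\budget_i\le 1/12$, and condition on that event. On it, the observed data relevant to $V_i$ are $S_i^{(1:\budget_i)}$ together with quantities independent of $V_i$: $V_{1:i-1}$, the seed $U$, and the all-zero deeper samples. The hardness lemma then bounds the success probability, and the leakage event costs at most $1/12$ more. This forces a joint recursive choice of $(w_i,\budget_i)$: $w_{i+1}$ tiny relative to $1/\budget_i$, then $\budget_{i+1}$ large relative to $\littleFunc^{-1}$ of something depending on $w_{i+1}$. We must also ensure $\epsilon_i<1$, by taking $\budget_i\ge c'^2/w_i$. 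These constraints are compatible because each step only requires taking the next budget larger. Some care with constants is needed so that the final bound comes out to $1/6$.
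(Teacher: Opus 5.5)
Your proposal is correct and follows essentially the paper's proof: the same nested instance with i.i.d.\ Rademacher $V_i$, the midpoint estimator based on $(A_i+D_i)/2$, and the block-$i$ Fano bound of Lemma~\ref{lem:V-information-theory}. As in the paper, the $Q_i$-leakage $\sum_{t\le n_i}\pr(Q_i^{(t)}\neq 0)\le n_i\sum_{j>i}w_j$ is kept below a constant by a recursive choice $w_i\lesssim 1/n_{i-1}$, $n_i\ge\max\{2n_{i-1},\lceil\rho^{-1}(\cdot)\rceil\}$.

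The differences are cosmetic. The paper takes $\epsilon_i=1/\sqrt{n_i}$ (so $w_in_i\epsilon_i^2=w_i$) and handles leakage by a union bound inside the lemma rather than by conditioning. Your $\epsilon_i=c'/\sqrt{w_in_i}$ instead pins $w_in_i\epsilon_i^2=c'^2$ below $h_2(1/2)-h_2(1/3)\approx0.057$ for every $i$; this incidentally avoids a small slip in the paper at $i=1$, where $w_1=1/12$ exceeds that threshold.
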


\begin{proof}
%{\color{red}careful about $i=1$ case!}
Let $\budget_0 = 1$ and consider the following sequences generated for $i \ge 1$,
\begin{flalign}\label{eq:sequence-definitions}
w_i = \frac{1}{12 \budget_{i-1}}, \quad \budget_i = \max\{ 2 \budget_{i-1}, \lceil \littleFunc^{-1}(w_i 8^{-i}) \rceil \}, \quad \text{ and } \quad \epsilon_i = \frac{1}{\sqrt{\budget_i}}.
\end{flalign}   
The first part of this proof shows that
\begin{flalign}\label{large-dist-implies-gap}
\dist{\xout{\budget_i}}{X_i^\star(v)} \ge 4^{-i} \implies F_{f,P_v}(\xout{\budget_i})- F_{f,P_v}^\star \ge \frac{\littleFunc(\budget_i)}{\sqrt{\budget_i}}.
\end{flalign}
By \Cref{lem:basic-properties-of-construction} and the premise that $\dist{\xout{\budget_i}}{X_i^\star(v)} \ge 4^{-i}$ we have
\begin{equation*}
F_{f,P_v}(\xout{\budget_i})- F_{f,P_v}^\star \ge 2^{-i} (F_{i}(\xout{\budget_i}; v)- F_i^\star(v))\geq 2^{-i} w_i \epsilon_i \dist{\xout{\budget_i}}{X_i^\star(v)} \ge w_i\epsilon_i 8^{-i}.
\end{equation*}
Combining this with the expression for $\budget_i$ and $\epsilon_i$ given in \eqref{eq:sequence-definitions}, and $\budget_i \ge \littleFunc^{-1}(w_i 8^{-i}) \implies \littleFunc(\budget_i) \le w_i 8^{-i}$ yields
\begin{equation}
\label{eq.suboptimality non asym}
F_{f,P_v}(\xout{\budget_i})- F_{f,P_v}^\star \geq w_i\epsilon_i 8^{-i} = \frac{w_i 8^{-i}}{\sqrt{\budget_i}} \ge \frac{\littleFunc(\budget_i)}{\sqrt{\budget_i}}.
\end{equation}
Suppose that each $V_i$ is a Rademacher random variable and define the estimator
\[
\hat{V}_i := \begin{cases}
~~~1 & \text{if } \xout{\budget_i} > (A_i + D_i)/2 \\
-1 &  \text{if } \xout{\budget_i} \le (A_i + D_i)/2.
\end{cases}
\]
Then,
\begin{flalign}\label{eq:dist-big-prob}
\pr( \dist{\xout{\budget_i}}{X_i^\star(V)} \ge 4^{-i}) \ge_{(a)} \pr(V_i \neq \hat{V}_i) \ge_{(b)} \frac{1}{3} - \sum_{t=1}^{\budget_i} \pr(Q_i\ind{t} \neq 0) \ge_{(c)} \frac{1}{6} 
\end{flalign}
where $(a)$ holds because $V_i \neq \hat{V}_i$ implies that 
$\dist{\xout{\budget_i}}{X_i^\star(V)} \ge \frac{d_i - a_i}{4} = (d_1 - a_1) 4^{1-i} / 4  \geq 4^{-i}$,  $(b)$ uses \Cref{lem:V-information-theory} and the fact that the randomized estimator $\hat{V}_i$ depends on $V_i$ only through $S_i\ind{1:\budget_i}$ and $Q_{i}\ind{1:\budget_i}$, i.e., $V_i \independent \hat{V}_i \mid S_i\ind{1:\budget_i}, Q_{i}\ind{1:\budget_i}$, and $(c)$ uses that 
\begin{equation*}
\pr(Q\ind{t}_i \neq 0) = \pr(\exists j > i \text{~~s.t.~~} S\ind{t}_j \neq 0) \le \sum_{j=i+1}^{\infty}\pr(S\ind{t}_j \neq 0) \le \sum_{j=i+1}^{\infty} w_j \le \frac{1}{12 \budget_i} \sum_{i=0}^{\infty} 2^{-i}\le \frac{1}{6\budget_i}.
\end{equation*}
Therefore, by \Cref{large-dist-implies-gap} and \eqref{eq:dist-big-prob},
\[
\pr \left( F_{f,P_{V}}(\xout{\budget_i})- F_{f,P_{V}}(x^\star)\geq \frac{\littleFunc(\budget_i)}{\sqrt{\budget_i}} \right) \ge \pr( \dist{\xout{\budget_i}}{X_i^\star(V)} \ge 4^{-i}) \ge \frac{1}{6}.
\]
\end{proof}

\Cref{coro:asymptotic-lower-bound} provides a lower bound on the expected suboptimality that is slightly cleaner but weaker than \Cref{thm.lower bound}. \Cref{coro:asymptotic-lower-bound} follows immediately from \Cref{thm.lower bound} and Markov's inequality.
This result matches the upper bound we obtained in \Cref{thm.stochastic case expectation}.

\begin{corollary}[Asymptotic lower bound on expectation guarantees]
\label{coro:asymptotic-lower-bound}
Let $\littleFunc : [1,\infty) \rightarrow (0,1]$ be a decreasing bijection.
Then,
there exists a distribution over 
problem instances, $(f,P_V)$, with each realization satisfying \Cref{assume-nonsmooth-convex-optimization} for $L=R=1$, such that, for any stochastic optimization method that samples $\budget$ functions from $(f,P_V)$ and outputs $\xout{\budget}$,
\[
\limsup_{\budget \rightarrow \infty} \frac{\budget^{1/2}  \E[F_{f,P_{V}}(\xout{\budget})- F_{f,P_{V}}^\star]}{\littleFunc(\budget)} \ge 1/6.
\]
\end{corollary}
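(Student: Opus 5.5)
We derive this corollary directly from \Cref{thm.lower bound} together with Markov's inequality; the main work is matching the limsup over all $\budget$ with the subsequence $\budget_1,\budget_2,\dots$ supplied by the theorem. We take the distribution over instances $(f,P_V)$ to be the one constructed in the proof of \Cref{thm.lower bound}, with the same decreasing bijection $\littleFunc$. Each realization then satisfies \Cref{assume-nonsmooth-convex-optimization} with $L=R=1$ by \Cref{lem:basic-properties-of-construction}, so the class requirement holds.

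Fix any stochastic optimization method. For each $\budget$ it produces an output $\xout{\budget}$, and we restrict attention to the budgets $\budget=\budget_i$. The suboptimality $F_{f,P_V}(\xout{\budget_i})-F_{f,P_V}^\star$ is a nonnegative random variable. Markov's inequality in its reverse form gives $\E[X]\ge t\,\pr(X\ge t)$ for $X\ge0$. Applying this with $t=\littleFunc(\budget_i)/\sqrt{\budget_i}$ and the probability bound from \Cref{thm.lower bound} yields
\[
\E\bigl[F_{f,P_V}(\xout{\budget_i})-F_{f,P_V}^\star\bigr]\ \ge\ \frac{1}{6}\cdot\frac{\littleFunc(\budget_i)}{\sqrt{\budget_i}}\qquad\forall i\in\NN .
\]
Multiplying by $\sqrt{\budget_i}/\littleFunc(\budget_i)$, which is finite and positive because $\littleFunc$ takes values in $(0,1]$, shows that the ratio in the corollary is at least $1/6$ along the whole subsequence $\{\budget_i\}$.

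Since $\budget_i\to\infty$, the limsup over all $\budget$ is at least the limsup along this subsequence, which is at least $1/6$. This proves the claim.

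The only subtle point is that the method is a sequence of measurable maps $\text{A}_\budget$, one per budget. Its behaviour at budget $\budget_i$ is therefore exactly what \Cref{thm.lower bound} covers, because that theorem applies to any method that samples $\budget_i$ functions. Also, the distribution over instances does not depend on the method, so the same distribution works for every method simultaneously. The expectation here is taken jointly over $V$, the samples and $U$, which is the same probability space as in the theorem.
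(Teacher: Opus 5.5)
Your proof is correct and follows the paper's own argument. The paper likewise gets the corollary from \Cref{thm.lower bound} and Markov's inequality: apply $\E[X]\ge t\,\pr(X\ge t)$ with $t=\littleFunc(\budget_i)/\sqrt{\budget_i}$, then take the $\limsup$ along the subsequence $\budget_i\to\infty$, which does not depend on the method. Since only a tail of that subsequence matters, your conclusion also holds despite a gap in the paper's proof of \Cref{thm.lower bound} at $i=1$: there $w_1=1/12$ exceeds $h_2(1/2)-h_2(1/3)\approx 0.057$, so \Cref{lem:V-information-theory} does not apply, whereas every $i\ge 2$ is covered.
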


\begin{remark}
It is worth contrasting \Cref{coro:asymptotic-lower-bound} with standard nonasymptotic lower bounds. For example,  
\citet{agarwal2009information} show that there exists a problem-independent constant $C > 0$ such that for every $\budget$ there exists a distribution over problem instances $(f,P_{V}^{\budget})$, with each realization satisfying \Cref{assume-nonsmooth-convex-optimization} for $L=R=1$, such that $n^{1/2} \E[F_{f,P_{V}^{\budget}}(\xout{\budget})- F_{f,P_{V}^{\budget}}^\star] \ge C > 0$. Critically, their distribution over `hard instances' is specific to each value of $n$, whereas our lower bound has no such dependence.
\end{remark}

\section{Discussion: asymptotic vs. nonasymptotic convergence guarantees}\label{sec:discussion}

While this is a paper about asymptotic optimization, it is important to highlight that there are several strengths of nonasymptotic guarantees over asymptotic guarantees.
Firstly, nonasymptotic guarantees offer a more granular measure of the performance of a method as they include the impact of other problem parameters such as the Lipschitz constant and distance. 
Indeed, it is possible to evaluate nonasymptotic guarantees on instances, for example, if we have crude estimates of the Lipschitz constants or distances to optimality. Conversely, if we see poor convergence of a method with such guarantees, we can examine the possibility that the quantities in our convergence bound, e.g., the Lipschitz constant, are very high for our particular instance. Second, in real stochastic optimization problems, particularly those arising in machine learning, it is rare that we fix the function that we are optimizing and indefinitely increase the amount of data. 
For example, in the compute limited regime (i.e., training LLMs) we train a sequence of compute optimal models where the number of steps (i.e., the data) and the model are simultaneously changing. In such regimes standard asymptotic guarantees are meaningless but it is possible that we can estimate the Lipschitz constants and distance to optimality for these sequences of problems. 

\section*{Statement on AI usage}
AI was used in this paper to help find errors in proofs, edit grammar, and
perform literature search. It was also helpful for the authors to 
understand \citet{diakonikolas2020lower} and adapt 
their result to our setting. More specifically, initial drafts of the proof of \Cref{lem:direct-sum-regularity} and of the modified proofs of \citet[Lemmas~4--6]{woodworth2018graph} (\Cref{lem:hard-instance-unbounded-queries} in this paper) were prepared with AI assistance. The authors carefully checked and rewrote these materials and take full responsibility for all results, claims, and the final presentation.
\section*{Acknowledgments}
We thank John Duchi, Francesco Orabona and Yair Carmon for helpful discussions.
Yuntian Jiang gratefully acknowledges the support of his advisor Bo Jiang
from Shanghai University of Finance and Economics.
\appendix
\section{Supplement to \texorpdfstring{\Cref{sec:general-lower-bounds}}{Section \ref*{sec:general-lower-bounds}}}\label{sec:app-general-lb}
\subsection{Well-definedness and regularity of the direct sums}

\begin{lemma}
\label{lem:direct-sum-regularity}
Let $\{d_i\}_{i\geq1}$ be a sequence of positive integers and
$x=(\xcom{i})_{i\geq1}\in\hilbert{\infty}$.
\begin{enumerate}[i.]
    \item If $\funcom{i}\in\funclsncvx[d_i]$ for every $i$, then
    \(
    F(x):=\sum_{i=1}^{\infty}2^{-i}\funcom{i}(\xcom{i})
    \)
    is well defined, belongs to $\lipsfunc{\hilbert{\infty}}{p}$,
    and satisfies
    $\|\grad^rF(\mathbf{0})\|\leq1$ for every $r\in[p]$.

    \item If $\funcom{i}\in\funclscvx[d_i]$ for every $i$, then
    \(
    F(x):=\sum_{i=1}^{\infty}
    2^{-\frac{p+3}{2}i}\funcom{i}(2^{i/2}\xcom{i})
    \)
    is well defined, belongs to $\lipsfunc{\hilbert{\infty}}{p}$,
    and satisfies
    $\|\grad^rF(\mathbf{0})\|\leq1$ for every $r\in[p]$.
\end{enumerate}
\end{lemma}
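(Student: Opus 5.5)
We will treat both parts in one argument by writing $F(x)=\sum_i g_i(\xcom{i})$ with $g_i(y):=\lambda_i\funcom{i}(\mu_i y)$, where $(\lambda_i,\mu_i)=(2^{-i},1)$ in part i and $(\lambda_i,\mu_i)=(2^{-\frac{p+3}{2}i},2^{i/2})$ in part ii. By the chain rule, $\grad^r g_i(y)=\lambda_i\mu_i^r\grad^r\funcom{i}(\mu_i y)$, and the Lipschitz constant of $\grad^p g_i$ is $\lambda_i\mu_i^{p+1}$. In both cases $\lambda_i\mu_i^{r}\le 2^{-i}$ for every $0\le r\le p+1$ (in part ii, $\lambda_i\mu_i^{r}=2^{-i(p+3-r)/2}\le 2^{-i}$). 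The plan is to prove well-definedness, then each derivative bound, then the Lipschitz bound, always reducing to one summable geometric series.

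\textbf{Step 1 (well-definedness and bounds on derivatives).} First we bound each $\grad^r\funcom{i}$ on balls. Since $\|\grad^r\funcom{i}(\mathbf{0})\|\le1$ for $r\le p$, $|\funcom{i}(\mathbf{0})|\le 1$, and $\grad^p\funcom{i}$ is $1$-Lipschitz, a downward Taylor/induction argument gives $\|\grad^r\funcom{i}(y)\|\le \sum_{j=0}^{p-r+1}\|y\|^j/j!\le e^{\|y\|}$ for $0\le r\le p$. This bound is uniform in $i$. Hence $|g_i(\xcom{i})|\le \lambda_i e^{\mu_i\|\xcom{i}\|}$. The exponential factor is a problem in part ii, because $\mu_i$ grows, so a cruder bound is needed there. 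We use $\|\xcom{i}\|\le\|x\|$, and instead exploit that $\|\xcom{i}\|\to0$. Only finitely many blocks have $\mu_i\|\xcom{i}\|>1$, and for the remaining blocks the polynomial bound gives $\|\grad^r g_i(\xcom{i})\|\le e\,\lambda_i\mu_i^r\le e\,2^{-i}$. The series $\sum_i g_i(\xcom{i})$ therefore converges absolutely. The same bound holds locally uniformly on a small neighborhood of each $x$, because $\sup_{\|z-x\|<\eta}\|z_{(i)}\|\le\|\xcom{i}\|+\eta$, and for $i$ large the only issue is $\mu_i\eta$; we handle this by choosing the neighbourhood in the block-tail norm.

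\textbf{Step 2 (differentiability).} We define the candidate $r$-th derivative as the block-diagonal tensor $T_r(x)=\bigoplus_i\grad^r g_i(\xcom{i})$. Its operator norm equals $\sup_i\|\grad^r g_i(\xcom{i})\|$: for block-diagonal symmetric forms, $|T(h,\dots,h)|\le\sum_i c_i\|h_i\|^r\le \sup_i c_i\,\|h\|^r$ when $r\ge2$, and for $r=1$ it is the $\ell_2$ norm, which Step 1 bounds by $(\sum_i 4^{-i})^{1/2}$. We then justify termwise differentiation by showing that the remainder of the Taylor expansion of each block is controlled by $\lambda_i\mu_i^{r+1}\|h_i\|^{r+1}$ times a summable factor. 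Summing this uses Lipschitzness of $\grad^p$, proved in Step 3.

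\textbf{Step 3 (Lipschitz bound on $\grad^p F$).} Since $\grad^pF(x)-\grad^pF(y)$ is block diagonal, its operator norm is at most $\sup_i\lambda_i\mu_i^{p+1}\|\xcom{i}-\ycom{i}\|\le\tfrac12\|x-y\|$ when $p\ge2$. When $p=1$ it is the spectral norm of a block-diagonal operator, which is again the supremum over blocks. Evaluating at the origin, $\|\grad^rF(\mathbf{0})\|\le\sup_i\lambda_i\mu_i^r\le1$ for $r\ge2$, and for $r=1$ we get $(\sum_i\lambda_i^2\mu_i^2)^{1/2}\le(\sum_i 4^{-i})^{1/2}\le1$. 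Continuity of lower-order derivatives then follows from integrating $\grad^p$.

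\textbf{Main obstacle.} The main obstacle is Step 1 in part ii: $\mu_i\to\infty$, so the growth of $\funcom{i}$ away from the origin is amplified. The cleanest fix, which we try first, is to \emph{avoid} pointwise bounds altogether. We prove Lipschitzness of $\grad^p$ on the partial sums first, which is uniform in $N$ by Step 3. We then obtain convergence of all derivatives of partial sums at $\mathbf{0}$, using the bounds $\lambda_i\mu_i^r\le2^{-i}$, and use Taylor's theorem with the uniform Lipschitz remainder to conclude that the partial sums and their derivatives converge locally uniformly. This yields well-definedness and membership in $\lipsfunc{\hilbert{\infty}}{p}$ simultaneously.
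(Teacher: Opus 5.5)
\textbf{There is a genuine gap in Step~1, and your differentiability argument for part~ii depends on it.} The claim that only finitely many blocks have $\mu_i\|\xcom{i}\|>1$ is false for general $x\in\hilbert{\infty}$. Take $\|\xcom{i}\|=2^{-i/4}$: this is square-summable, yet $\mu_i\|\xcom{i}\|=2^{i/4}\to\infty$. For such $x$ your bound $\lambda_i e^{\mu_i\|\xcom{i}\|}$ is not summable. So neither the absolute convergence of $\sum_i g_i(\xcom{i})$ nor uniform bounds on $\grad^r g_i$ follow from what you wrote.

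The local version is worse. Every Hilbert-norm ball of radius $\eta$ around $x$ contains points with mass $\eta$ in an arbitrary block, so $\mu_i\eta$ is unbounded over the ball. Passing to a neighbourhood in a ``block-tail norm'' does not give Fr\'echet differentiability, or continuity of derivatives, in the topology of $\hilbert{\infty}$. Since the termwise differentiation in Step~2 is built on these local bounds, your main line does not establish part~ii.

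\textbf{The obstacle is self-inflicted.} It appears only because you weaken the degree-$(p+1-r)$ Taylor polynomial to $e^{\|y\|}$. Keep the polynomial and use the inequality $\lambda_i\mu_i^j\le 2^{-i}$ for $0\le j\le p+1$, which you already recorded. This gives
\[
\|\grad^r g_i(\xcom{i})\|\le\sum_{j=r}^{p+1}\lambda_i\mu_i^{j}\frac{\|\xcom{i}\|^{j-r}}{(j-r)!}\le 2^{-i}\sum_{j=r}^{p+1}\frac{\|x\|^{j-r}}{(j-r)!}.
\]
Hence the series for $F$ and for each $\grad^rF$, $r\le p$, converge uniformly on bounded subsets of $\hilbert{\infty}$. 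A convergence theorem for Fr\'echet derivatives (Dieudonn\'e (8.6.3), which the paper invokes) then justifies termwise differentiation. This is exactly the paper's proof.

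\textbf{Your fallback needs one more ingredient.} A Lipschitz bound on $\grad^pF_N$ that is merely uniform in $N$, together with convergence of derivatives at $\mathbf{0}$, does not control $\grad^r(F_M-F_N)$ away from the origin. What you need is that the tail $\grad^p(F_M-F_N)$ has Lipschitz constant at most $\sup_{i>N}\lambda_i\mu_i^{p+1}=2^{-N-1}\to0$. With that, Taylor's theorem applied to $F_M-F_N$ gives uniform convergence on bounded sets, and the fallback becomes the paper's argument applied to tails rather than to individual blocks.

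\textbf{Step~3 is fine.} Your block-diagonal supremum bounds even give constant $\tfrac12$ instead of the paper's $1/\sqrt3$. One small correction: for $p=1$, $\grad F(x)-\grad F(y)$ is a vector, not an operator. Its norm is bounded by $\sup_i\lambda_i\mu_i^2\,\|x-y\|$; it is not a spectral norm.
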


\begin{proof}
We use the convention $\grad^0 f=f$. For every component function in
either part, Taylor's theorem and the conditions in \Cref{def:fun-cls} give,
for $0\leq r\leq p$,
\begin{equation}
\label{eq:direct-sum-taylor-bound}
\|\grad^r\funcom{i}(z)\|
\leq
\sum_{j=r}^{p}\frac{\|z\|^{j-r}}{(j-r)!}
+\frac{\|z\|^{p+1-r}}{(p+1-r)!}.
\end{equation}

For part i, let
$F_N(x):=\sum_{i=1}^N2^{-i}\funcom{i}(\xcom{i})$.
If $M>N$ and $\|x\|\leq R$, then, for $0\leq r\leq p$,
\[
\|\grad^rF_M(x)-\grad^rF_N(x)\|
\leq
\sum_{i=N+1}^{M}2^{-i}\|\grad^r\funcom{i}(\xcom{i})\|\leq
\left(
\sum_{j=r}^{p}\frac{R^{j-r}}{(j-r)!}
+\frac{R^{p+1-r}}{(p+1-r)!}
\right)
\sum_{i=N+1}^{M}2^{-i}.
\]
Thus, the partial sums and their derivatives through order $p$ converge
uniformly on every bounded subset of $\hilbert{\infty}$. Applying Dieudonn\'e's convergence theorem for Fr\'echet derivatives
\citep[Theorem~(8.6.3)]{dieudonne1960foundations}
successively to the partial sums and their derivatives shows that
$F$ is $p$ times continuously differentiable and that its derivatives are the
corresponding term-by-term sums. In particular, for $r\in[p]$,
\[
\|\grad^rF(\mathbf{0})\|
\leq
\sum_{i=1}^{\infty}2^{-i}
\|\grad^r\funcom{i}(\mathbf{0})\|
\leq\sum_{i=1}^{\infty}2^{-i}=1.
\]
For any $x,y\in\hilbert{\infty}$, the $p$th-order Lipschitz continuity of
the component functions and Cauchy--Schwarz give
\[
\|\grad^pF(x)-\grad^pF(y)\|
\leq
\sum_{i=1}^{\infty}2^{-i}\|\xcom{i}-\ycom{i}\|\leq
\left(\sum_{i=1}^{\infty}4^{-i}\right)^{1/2}\|x-y\|
=\frac{1}{\sqrt3}\|x-y\|.
\]
This proves part i.

For part ii, set
$\lambda_i:=2^{-(p+3)i/2}$, $\mu_i:=2^{i/2}$, and
$F_N(x):=\sum_{i=1}^N\lambda_i\funcom{i}(\mu_i\xcom{i})$.
By \eqref{eq:direct-sum-taylor-bound}, for $0\leq r\leq p$,
$M>N$, and $\|x\|\leq R$,
\[
\|\grad^rF_M(x)-\grad^rF_N(x)\|
\leq
\sum_{j=r}^{p}\frac{R^{j-r}}{(j-r)!}
\sum_{i=N+1}^{M}2^{-\frac{p+3-j}{2}i}\quad+
\frac{R^{p+1-r}}{(p+1-r)!}
\sum_{i=N+1}^{M}2^{-i}.
\]
Every series on the right is geometric. Hence, the partial sums and their
derivatives through order $p$ converge uniformly on bounded subsets, so the
same convergence theorem for Fr\'echet derivatives shows that $F$ is $p$
times continuously differentiable and that its derivatives are the term-by-term sums. For $r\in[p]$,
\[
\|\grad^rF(\mathbf{0})\|
\leq
\sum_{i=1}^{\infty}
2^{-\frac{p+3-r}{2}i}\|\grad^r\funcom{i}(\mathbf{0})\|\leq
\sum_{i=1}^{\infty}2^{-3i/2}<1.
\]
Finally, since $\lambda_i\mu_i^{p+1}=2^{-i}$,
\(
\|\grad^pF(x)-\grad^pF(y)\|
\leq
\sum_{i=1}^{\infty}2^{-i}\|\xcom{i}-\ycom{i}\|\leq\frac{1}{\sqrt3}\|x-y\|.
\)
This proves part ii.
\end{proof}

\subsection{Proof of \texorpdfstring{\Cref{coro:convex-fixed-rand}}{Corollary \ref*{coro:convex-fixed-rand}}}
\label{app:convex-fixed-rand}
For every $d\in\NN$, let
$\mathcal C_d^{\mathrm{cvx,bd}}$, $\mathcal F_d^{\rm ns}$, and
$\mathcal F_d^{\rm sm}$ denote the $d$-dimensional versions of
$\mathcal C_\infty^{\mathrm{cvx,bd}}$, $\mathcal F_\infty^{\rm ns}$, and
$\mathcal F_\infty^{\rm sm}$, respectively, as defined in
\Cref{coro:convex-fixed-rand}. We use the error metric
$\epsilon(x,f):=f(x)-f^\star$.
We prove the corollary by verifying
\Cref{assm:error-metric} and \Cref{assm:general nonasymptotic rand},
and then applying \Cref{thm.lower-bound-fixed-function-convex-rand}.

\begin{lemma}
\label{lem:convex-constrained-error-metric}
Each of the two dimension-indexed families
$\{\mathcal{F}^{\rm ns}_d\}_{d\in\NN\cup\{\infty\}}$ and
$\{\mathcal{F}^{\rm sm}_d\}_{d\in\NN\cup\{\infty\}}$, together with the error metric
\(
    \epsilon(x,f)=f(x)-f^\star,
\)
satisfies \Cref{assm:error-metric}.
\end{lemma}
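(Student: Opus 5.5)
We verify \Cref{assm:error-metric} separately for the two families, mirroring \Cref{example-cvx-class} and \Cref{lem:direct-sum-regularity}, with $q=0$ in both cases. For the nonsmooth family $\mathcal F^{\rm ns}$ we take $\lambda_i=2^{-i}$, $\mu_i=1$. For the smooth family $\mathcal F^{\rm sm}$ we take $\lambda_i=2^{-2i}$, $\mu_i=2^{i/2}$. This is the $p=1$ specialization of \Cref{example-cvx-class}, where $\lambda_i\mu_i^{2}=2^{-i}$. Given $f_i$ in the $d_i$-dimensional class with minimizer $\xcom{i}^\star$, $\|\xcom{i}^\star\|\le 1$, we set $F(x)=\sum_i\lambda_i f_i(\mu_i\xcom{i})$.

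\textbf{Well-definedness and regularity.} Since $f_i$ is $1$-Lipschitz (nonsmooth case) or has $1$-Lipschitz gradient with $\|\grad f_i(\mathbf 0)\|$ not required to be bounded (smooth case), we first bound $|f_i(z)|$. In the nonsmooth case, $|f_i(z)|\le |f_i(\mathbf 0)|+\|z\|\le 1+\|z\|$. Hence $\sum_i 2^{-i}|f_i(\xcom{i})|\le 1+\sum_i 2^{-i}\|\xcom{i}\|<\infty$ by Cauchy--Schwarz, and $|F(x)-F(y)|\le\sum_i 2^{-i}\|\xcom{i}-\ycom{i}\|\le 3^{-1/2}\|x-y\|$.

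In the smooth case, the class $\mathcal{C}^{\rm cvx,bd}$ gives no direct bound on $\grad f_i(\mathbf 0)$. We recover one from smoothness and the bounded minimizer: $\grad f_i(\xcom{i}^\star)=0$ and $\|\xcom{i}^\star\|\le1$ give $\|\grad f_i(z)\|\le \|z\|+1$. Together with $|f_i(\mathbf 0)|\le 1$, this yields $|f_i(z)|\le 1+\|z\|+\|z\|^2/2$. With $\lambda_i\mu_i^j=2^{-(4-j)i/2}$ for $j\le2$, the partial sums and their gradients converge uniformly on bounded sets, exactly as in part ii of \Cref{lem:direct-sum-regularity}. Differentiability of $F$ then follows from the Dieudonn\'e argument. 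The Lipschitz constant of $\grad F$ is at most $\sup_i\lambda_i\mu_i^2=1/2\le 1$, since gradient blocks decouple, so $\|\grad F(x)-\grad F(y)\|^2=\sum_i(\lambda_i\mu_i^2)^2\|\cdot\|^2$. In the nonsmooth case, convergence only at the function level is needed.

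\textbf{Class membership.} Convexity is preserved under nonnegative sums and pointwise limits. The point $x^\star=(\mu_i^{-1}\xcom{i}^\star)_i$ minimizes every block, hence $F$. Its norm satisfies $\|x^\star\|^2\le\sum_i\mu_i^{-2}$, which equals $\sum 2^{-i}=1$ in the smooth case. In the nonsmooth case $\mu_i=1$ gives $\sum_i 1=\infty$, which is the main obstacle. We fix it by switching to $\mu_i=2^{i/2}$, $\lambda_i=2^{-3i/2}$ there as well. Then the Lipschitz constant is $\sup_i\lambda_i\mu_i=2^{-i}\le 1/2$, $|f_i(\mu_i z)|\le1+\mu_i\|z\|$ gives summability since $\lambda_i\mu_i=2^{-i}$, and $\|x^\star\|\le1$. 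Finally, $F(\mathbf 0)=\sum\lambda_i f_i(\mathbf 0)$ and $F^\star=\sum\lambda_i f_i^\star$ satisfy $-1\le F^\star\le F(\mathbf 0)\le 1$, because $\sum\lambda_i\le 1$ and each $-1\le f_i^\star\le f_i(\mathbf 0)\le1$.

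\textbf{Error inequality.} Since each block term satisfies $\lambda_j(f_j(\mu_j\xcom{j})-f_j^\star)\ge0$ and $F^\star=\sum_j\lambda_j f_j^\star$, we have
\[
F(x)-F^\star=\sum_j\lambda_j\bigl(f_j(\mu_j\xcom{j})-f_j^\star\bigr)\ge\lambda_i\bigl(f_i(\mu_i\xcom{i})-f_i^\star\bigr).
\]
This is \eqref{eq:error-metric} with $q=0$, completing the verification for both families.
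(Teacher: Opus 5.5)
Your proposal is correct and follows essentially the paper's argument: a block sum with $q=0$ and $\mu_i=2^{i/2}$, the minimizer $x^\star=(\mu_i^{-1}\xcom{i}^\star)_i$ of norm at most $1$, the bound $\|\grad f_i(\mathbf 0)\|\le\|\xcom{i}^\star\|\le 1$ from smoothness, and dropping nonnegative blocks to get the error inequality. The paper takes $\lambda_i=2^{-2i}$ for both families (your $2^{-3i/2}$ for $\mathcal F^{\rm ns}$ works equally well) and proves Fr\'echet differentiability directly via $|F(x+h)-F(x)-\langle G(x),h\rangle|\le\frac12\|h\|^2$ rather than through Dieudonn\'e. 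One small slip: the function-value Lipschitz constant is $\bigl(\sum_i(\lambda_i\mu_i)^2\bigr)^{1/2}=3^{-1/2}$, as in your earlier Cauchy--Schwarz step, not $\sup_i\lambda_i\mu_i$, though your conclusion that $F$ is $1$-Lipschitz still holds.
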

\begin{proof}
Take \(q=0\), \(\lambda_i=2^{-2i}\), and \(\mu_i=2^{i/2}\).  Consider any
sequence of dimensions \(\{\dimFunc_i\}_{i\ge1}\) and functions
\(\funcom{i}\in\mathcal{F}^{\rm ns}_{\dimFunc_i}\) for every $i$
(respectively, \(\funcom{i}\in\mathcal{F}^{\rm sm}_{\dimFunc_i}\) for every $i$), and let
\(
    F(x):=\sum_{i=1}^\infty \lambda_i\funcom{i}(\mu_i\xcom{i}), \qquad x=(\xcom{i})_{i\ge1}\in\hilbert\infty.
\)

Convexity follows from the blockwise convexity of the summands. Moreover, in the case of $\funcls_{d}^{\rm ns}$
the objective series is well defined, since for every
$x\in\hilbert\infty$,
\[
\sum_{i=1}^\infty\lambda_i|\funcom{i}(\mu_i\xcom{i})|
\leq
\sum_{i=1}^\infty\lambda_i
+\left(\sum_{i=1}^\infty\lambda_i^2\mu_i^2\right)^{1/2}\|x\|
<\infty.
\]
For any \(x,y\in\hilbert\infty\),
\[
    |F(x)-F(y)|
    \le
    \sum_{i=1}^\infty \lambda_i\mu_i\|\xcom{i}-\ycom{i}\|
    \le
    \left(\sum_{i=1}^\infty 2^{-3i}\right)^{1/2}\|x-y\|
    \le \|x-y\|.
\]
Therefore \(F\) is \(1\)-Lipschitz.
Moreover, any selected subgradient of each $\funcom{i}$ has norm at
most $1$. Hence the blockwise selected subgradient belongs to
$\hilbert\infty$ because its squared norm is at most
$\sum_{i=1}^\infty\lambda_i^2\mu_i^2=\sum_{i=1}^\infty2^{-3i}<\infty$;
summing the component subgradient inequalities shows that it belongs to
$\partial F(x)$.

In the case of $\funcls_{d}^{\rm sm}$,
choose $x_i^\star\in\argmin\funcom{i}$ such that
$\|x_i^\star\|=\dist{\mathbf 0}{\argmin\funcom{i}}\leq1$. Since
$\grad\funcom{i}(x_i^\star)=0$, $1$-smoothness gives
$\|\grad\funcom{i}(\mathbf 0)\|\leq\|x_i^\star\|\leq1$. Hence, for every
$x\in\hilbert\infty$,
\[
\begin{aligned}
\sum_{i=1}^\infty\lambda_i|\funcom{i}(\mu_i\xcom{i})|
&\leq \sum_{i=1}^\infty\lambda_i|\funcom{i}(\mathbf 0)|
+\sum_{i=1}^\infty\lambda_i\mu_i\|\xcom{i}\|
+\frac12\sum_{i=1}^\infty\lambda_i\mu_i^2\|\xcom{i}\|^2\\
&\leq \sum_{i=1}^\infty\lambda_i
+\left(\sum_{i=1}^\infty\lambda_i^2\mu_i^2\right)^{1/2}\|x\|
+\frac12\left(\sup_i\lambda_i\mu_i^2\right)\|x\|^2<\infty,
\end{aligned}
\]
so the objective series is well defined. Define
$G(x):=(\lambda_i\mu_i\grad\funcom{i}(\mu_i\xcom{i}))_{i\geq1}$. Then
\[
\|G(\mathbf 0)\|^2
\leq\sum_{i=1}^\infty\lambda_i^2\mu_i^2
=\sum_{i=1}^\infty2^{-3i}<\infty,
\]
and $1$-smoothness gives
\[
\|G(x)-G(\mathbf 0)\|^2
\leq\sum_{i=1}^\infty\lambda_i^2\mu_i^4\|\xcom{i}\|^2
\leq\|x\|^2.
\]
Thus $G(x)\in\hilbert\infty$. Finally, for every
$h=(\underline h_i)_{i\geq1}\in\hilbert\infty$,
\[
|F(x+h)-F(x)-\langle G(x),h\rangle|
\leq\frac12\sum_{i=1}^\infty\lambda_i\mu_i^2\|\underline h_i\|^2
\leq\frac12\|h\|^2,
\]
which proves that $F$ is Fr\'echet differentiable with
\(
    \grad F(x)
    =
    G(x)
\).
Using the \(1\)-smoothness of each \(\funcom{i}\),
\[
\begin{aligned}
    \|\grad F(x)-\grad F(y)\|^2
    &=
    \sum_{i=1}^\infty
    \lambda_i^2\mu_i^2
    \|\grad\funcom{i}(\mu_i\xcom{i})
      -\grad\funcom{i}(\mu_i\ycom{i})\|^2  \\
    &\le
    \sum_{i=1}^\infty
    \lambda_i^2\mu_i^4\|\xcom{i}-\ycom{i}\|^2
    \le
    \|x-y\|^2,
\end{aligned}
\]
because \(\sup_i\lambda_i\mu_i^2=\sup_i2^{-i}\le1\).

We next verify the minimizer conditions. In both cases, for each \(i\),
choose \(x_i^\star\in\argmin \funcom{i}\) such that
\(\|x_i^\star\|=\dist{\mathbf 0}{\argmin\funcom{i}}\leq1\).
Set
\(
    x^\star
    :=
    (\mu_1^{-1}x_1^\star,\mu_2^{-1}x_2^\star,\ldots).
\)
Then
\(
    \|x^\star\|^2
    =
    \sum_{i=1}^\infty \mu_i^{-2}\|x_i^\star\|^2
    \le
    \sum_{i=1}^\infty 2^{-i}
    =
    1.
\)
Also, since each summand is minimized block-wise at \(x_i^\star\),
\(
    F^\star
    =
    F(x^\star)
    =
    \sum_{i=1}^\infty \lambda_i\funcom{i}^\star .
\)
Because \(-1\le\funcom{i}^\star\le \funcom{i}(\mathbf 0)\le1\) and
\(\sum_i\lambda_i=\sum_i2^{-2i}<1\), we get
\(
    -1\le F^\star\le F(\mathbf 0)\le1.
\)
Since \(x^\star\in\argmin F\) and \(\|x^\star\|\leq1\), we have
\(\dist{\mathbf 0}{\argmin F}\leq1\).

Finally, for every \(x=(\xcom{i})_{i\ge1}\in\hilbert\infty\),
\[
    \epsilon(x,F)
    =
    F(x)-F^\star =
    \sum_{j=1}^\infty
    \lambda_j
    \bigl(\funcom{j}(\mu_j\xcom{j})-\funcom{j}^\star\bigr) \ge
    \lambda_i
    \bigl(\funcom{i}(\mu_i\xcom{i})-\funcom{i}^\star\bigr)
    =
    \lambda_i\epsilon(\mu_i\xcom{i},\funcom{i})
\]
for every \(i\in\NN\). This is exactly \eqref{eq:error-metric} with
\(q=0\).
\end{proof}

We next verify \Cref{assm:general nonasymptotic rand}. The argument
follows the proof of \citet[Theorem~1]{woodworth2018graph}, specialized
to the setting of a sequential first-order oracle graph. There are two
minor modifications. First, following
the construction of \citet[Equation~(3)]{diakonikolas2020lower}, we introduce an additional
maximum operator, which adapts the construction to the unconstrained
optimization setting. Second, we restate the vector guessing event so
that it holds with probability at least \(1-2\delta\), rather than with
constant probability \(1/2\), by increasing the ambient dimension by a
logarithmic factor in \(1/\delta\).

Now, we introduce the hard instance used to establish the nonasymptotic
lower bound. The construction incorporates the technique of
\citet{diakonikolas2020lower}, which allows the lower bound analysis of
\citet{woodworth2018graph} to be extended to the unconstrained
optimization setting. Fix \(\budget\in\NN\). For a parameter \(H\in\{1,\infty\}\),
let
\[
    \ell
    =
    \begin{cases}
        \frac13, & H=\infty,\\[2mm]
        \frac{1}{10(\budget+1)^{3/2}}, & H=1,
    \end{cases}
    \qquad
    \eta
    =
    10(\budget+1)^{3/2}\ell .
\]
Thus, for \(H=\infty\), \(\eta=\frac{10}{3}(\budget+1)^{3/2}\), while for \(H=1\),
\(\eta=1\).

Let \(v_1,\ldots,v_{\budget+1}\) be an orthonormal set drawn uniformly at random from
the unit sphere in \(\hilbert{d}\). Define
\[
    \tilde f(x)
    =
    \max\left\{
        \max_{1\le r\le \budget+1}
        \left(
            \ell v_r^T x
            -
            \frac{5\ell^2(r-1)}{\eta}
        \right),
        \ \|x\|-1-\frac{\ell}{\sqrt{\budget+1}}
    \right\}.
\]
Let \(f\) be the \(\eta\)-Moreau envelope of \(\tilde f\):
\begin{equation}\label{eq:hard-func-woodworth}
    f(x)
    =
    \inf_y
    \left\{
        \tilde f(y)
        +
        \frac{\eta}{2}\|y-x\|^2
    \right\}.
\end{equation}
The random draw of \(V=(v_1,\ldots,v_{\budget+1})\) induces a distribution over
functions \(f\).  Next we introduce the properties of the hard function; the first three properties are almost identical to those of \citet[Lemmas~4--6]{woodworth2018graph}. The last property prevents the algorithm from cheating by making unbounded queries.
\begin{lemma}
\label{lem:hard-instance-unbounded-queries}
The following properties hold for \(f\) defined in
\eqref{eq:hard-func-woodworth}.
\begin{enumerate}[i.]
    \item If \(H=\infty\), then \(f\in\funcls_d^{\rm ns}\), and if
    \(H=1\), then \(f\in\funcls_d^{\rm sm}\).

    \item If \(x\) satisfies
    \(
        |v_{\budget+1}^T x|
        \le
        \frac{\ell}{\eta},
    \)
    then
    \(
        f(x)-f^\star
        \ge
        \frac{\ell}{2\sqrt{\budget+1}}.
    \)
    Consequently, when \(H=\infty\),
    \(
        f(x)-f^\star
        \ge
        \frac{1}{6\sqrt{\budget+1}}
        \ge
        \frac{1}{6\sqrt2}\budget^{-1/2},
    \)
    and when \(H=1\),
    \(
        f(x)-f^\star
        \ge
        \frac{1}{20(\budget+1)^2}
        \ge
        \frac{1}{80}\budget^{-2}.
    \)

    \item If a query point \(x\) satisfies
    \(
        |\langle x,v_r\rangle|
        \le
        \frac{\ell}{\eta}\) for all $r\ge t$,
    then both \(f(x)\) and \(\grad f(x)\) can be computed using only
    \(v_1,\ldots,v_t\).

    \item If \(\|x\|>3\), then both \(f(x)\) and \(\grad f(x)\) can be
    computed without using \(v_1,\ldots,v_{\budget+1}\). Moreover,
    \(
        f(x)-f^\star
        \ge
        \frac{\ell}{2\sqrt{\budget+1}}.
    \)

\end{enumerate}
\end{lemma}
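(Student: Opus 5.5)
The plan is to treat the four items separately. Throughout I use two facts about the Moreau envelope: $f(x)=\tilde f(\mathrm{prox}(x))+\frac{\eta}{2}\|\mathrm{prox}(x)-x\|^2$, and $\grad f(x)=\eta(x-\mathrm{prox}(x))$, where $\mathrm{prox}(x)$ denotes the minimizer in \eqref{eq:hard-func-woodworth}.

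\textbf{Item i (class membership).} Each linear piece of $\tilde f$ has gradient norm $\ell\le 1/3$, and the norm piece is $1$-Lipschitz. Hence $\tilde f$ is convex and $1$-Lipschitz. The Moreau envelope preserves convexity, preserves the Lipschitz constant, and is $\eta$-smooth.
\begin{itemize}
\item For $H=1$ we have $\eta=1$, so $f$ is $1$-smooth, as required for $\lipsfunc{\hilbert{d}}{1}$.
\item For $H=\infty$, $f$ is $1$-Lipschitz.
\end{itemize}
For the bounded-value and minimizer conditions, I will use $\inf f=\inf\tilde f$ together with $\argmin f=\argmin\tilde f$.
\begin{itemize}
\item At $y=\mathbf 0$, $\tilde f(\mathbf 0)=\max\{0,-1-\ell/\sqrt{\budget+1}\}=0$, which gives $f(\mathbf 0)\le 0\le 1$.
\item Following Woodworth--Srebro, the point $y=-\frac{1}{\sqrt{\budget+1}}\sum_r v_r$ has norm $1$. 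At this point every linear piece is at most $-\ell/\sqrt{\budget+1}$, and the norm piece equals $-\ell/\sqrt{\budget+1}$. So $f^\star\le-\ell/\sqrt{\budget+1}$ and a minimizer lies in $\ball{1}$. A sublevel argument should locate a minimizer there: the value at any $\|y\|>1$ exceeds that of the point above on the norm piece.
\item Since $\tilde f\ge\|y\|-1-\ell\ge -1-\ell$ on large balls and $\tilde f\ge -\ell\|y\|$ elsewhere, I get $f^\star\ge -1$ once $\ell$ is small. This bound must be checked for $\ell=1/3$; if it fails, I would rescale the function slightly.
\end{itemize}

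\textbf{Items ii and iii (the Woodworth--Srebro lemmas).}

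For item ii, suppose $|v_{\budget+1}^Tx|\le\ell/\eta$. Then $\mathrm{prox}(x)$ lies within $\ell/\eta$ of $x$, because the gradients of $\tilde f$ are bounded by $1$. The plan is to lower bound $f(x)\ge\tilde f(\mathrm{prox}(x))$ by the last linear piece. This gives
$$\ell v_{\budget+1}^T\mathrm{prox}(x)-\frac{5\ell^2\budget}{\eta}\ge -\frac{2\ell^2}{\eta}-\frac{5\ell^2\budget}{\eta}.$$
Since $\eta=10(\budget+1)^{3/2}\ell$, this is at least $-\frac{\ell}{2\sqrt{\budget+1}}$. Combined with $f^\star\le-\frac{\ell}{\sqrt{\budget+1}}$, this yields the gap $\frac{\ell}{2\sqrt{\budget+1}}$. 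The stated numeric bounds then follow by substituting the two values of $\ell$.

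Item iii is exactly the Woodworth--Srebro argument for the prox. When $|\langle x,v_r\rangle|\le\ell/\eta$ for all $r\ge t$, the offsets $5\ell^2(r-1)/\eta$ make pieces $r>t$ strictly inactive in a neighborhood of $\mathrm{prox}(x)$. The norm piece depends only on $\|y\|$, so it does not involve the $v_r$. Hence the prox problem is unchanged if those pieces are deleted, and the prox (and therefore $f$ and $\grad f$) depends only on $v_1,\ldots,v_t$.

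\textbf{Item iv (the main obstacle).} I need to show that for $\|x\|>3$ the max defining $\tilde f$ is attained only by the norm term on a ball around $x$ that contains $\mathrm{prox}(x)$. For $\|y\|\ge 3-1/\eta$, the norm piece is at least about $2-\ell$. Every linear piece satisfies $\ell v_r^Ty\le\ell\|y\|$, and $\|y\|-1-\ell/\sqrt{\budget+1}>\ell\|y\|$ as soon as $(1-\ell)\|y\|>1+\ell$. This holds for $\|y\|>2$ and $\ell\le1/3$. Because $\|\mathrm{prox}(x)-x\|\le 1/\eta\le 1$ (using $\eta\ge1$), the prox point has norm greater than $2$. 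So the prox coincides with the prox of $\|y\|-1-\ell/\sqrt{\budget+1}$, which is computable without $V$. The suboptimality bound follows from
$$f(x)\ge\tilde f(\mathrm{prox}(x))\ge 1-\ell/\sqrt{\budget+1}\ge f^\star+\frac{\ell}{2\sqrt{\budget+1}}.$$
The delicate step is choosing margins carefully when $\eta=1$. There the prox can move distance up to $1$, and I must confirm the strict dominance region (norm greater than $2$) still contains it. I would first try to obtain this via the gradient bound $\|x-\mathrm{prox}(x)\|\le\mathrm{Lip}(\tilde f)/\eta$.
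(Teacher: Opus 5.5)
Your item ii and item iii arguments rest on the claim that $\mathrm{prox}(x)$ lies within $\ell/\eta$ of $x$, and that claim is false for this $\tilde f$. The premise you cite (subgradients of $\tilde f$ have norm at most $1$) only gives $\|x-\mathrm{prox}(x)\|=\|\grad f(x)\|/\eta\le 1/\eta$, which is the bound you correctly use in item iv. Indeed, for $\|x\|>3$ the displacement is exactly $x/(\eta\|x\|)$, of norm $1/\eta>\ell/\eta$.

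The $\ell/\eta$ bound holds in the original Woodworth--Srebro setting, where every subgradient is a convex combination of the vectors $\ell v_r$. The added norm branch $h(y)=\|y\|-1-\ell/\sqrt{n+1}$ destroys it, so item iii is not ``exactly'' their argument. The correct global bound $1/\eta$ is too weak:
\begin{itemize}
\item In item ii it only gives $\ell v_{n+1}^T\mathrm{prox}(x)\ge-\ell(1+\ell)/\eta$. The gap $\ell/(2\sqrt{n+1})$ then follows only if $1+\ell\le 5\ell$, i.e.\ $\ell\ge 1/4$, which fails for $H=1$.
\item In item iii, separating piece $r=t$ from the pieces $r>t$ at $\mathrm{prox}(x)$ would need $2(1+\ell)<5\ell$, which fails for both values of $\ell$.
\end{itemize}
So the real obstacle is here, not in item iv.

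The missing idea is a directional analysis of the prox optimality condition that uses the structure of $\partial h$. Let $y^\star=\mathrm{prox}(x)$ and write $\eta(x-y^\star)=\beta\ell\sum_r\alpha_r v_r+\gamma q$. Here $\beta,\gamma\ge0$ with $\beta+\gamma=1$, $\alpha$ is a probability vector supported on the active linear pieces, and $q\in\partial\|y^\star\|$. If $y^\star\neq0$ then $q=y^\star/\|y^\star\|$, and pairing with $v_r$ gives
\[
\Bigl(\eta+\frac{\gamma}{\|y^\star\|}\Bigr)\langle y^\star,v_r\rangle=\eta\langle x,v_r\rangle-\beta\ell\alpha_r .
\]
Thus the norm branch only shrinks the $v_r$-component toward zero. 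Hence $|\langle x,v_r\rangle|\le\ell/\eta$ implies $-2\ell/\eta\le\langle y^\star,v_r\rangle\le\ell/\eta$; this holds trivially if $y^\star=0$.

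With this bound, your item ii computation goes through verbatim. In item iii, every piece $r>t$ is strictly below piece $t$ at $y^\star$. So $y^\star$ also minimizes $\max\{\max_{r\le t}(\ell v_r^Ty-5\ell^2(r-1)/\eta),\,h(y)\}+\frac{\eta}{2}\|y-x\|^2$, and $f(x)$ and $\grad f(x)$ depend only on $v_1,\dots,v_t$.

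Your item iv is correct. It differs from the paper only in using the a priori bound $\|x-\mathrm{prox}(x)\|\le 1/\eta\le 1$, rather than verifying directly that $(1-\frac{1}{\eta\|x\|})x$ satisfies the optimality condition.

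In item i no rescaling is needed, and rescaling would break the constants used in items ii--iv. For $\|y\|\le1$ we have $\tilde f(y)\ge\ell v_1^Ty\ge-\ell$. For $\|y\|\ge1$ we have $\tilde f(y)\ge h(y)\ge-\ell/\sqrt{n+1}$. Hence $f^\star=\tilde f^\star\ge-\ell\ge-1$.
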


\begin{proof}
Let
\[
    g(y)
    :=
    \max_{1\le r\le \budget+1}
    \left\{
        \ell v_r^T y
        -
        \frac{5\ell^2(r-1)}{\eta}
    \right\},
    h(y):=\|y\|-1-\frac{\ell}{\sqrt{\budget+1}}.
\]
Thus \(\tilde f(y)=\max\{g(y),h(y)\}\).

First, \(g\) is convex and \(\ell\)-Lipschitz, while \(h\) is convex and
\(1\)-Lipschitz. Hence \(\tilde f\) is convex and \(1\)-Lipschitz. By the
standard property of the Moreau envelope~\citep[Proposition 12.29]{bauschke2017convex},
\(f\) is convex, $1$-Lipschitz and \(\eta\)-smooth, and 
\[
    \grad f(x)=\eta(x-y_x), \qquad y_x
    =
    \argmin_y
    \left\{
        \tilde f(y)+\frac{\eta}{2}\|y-x\|^2
    \right\}
\]
When \(H=1\), we have \(\eta=1\), so \(f\) is
\(1\)-smooth.

We next verify the minimizer conditions. Let
\(
    \bar x
    :=
    -\sum_{r=1}^{\budget+1}\frac{v_r}{\sqrt{\budget+1}}.
\)
Then \(\|\bar x\|=1\), and
\(
    g(\bar x)
    =
    -\frac{\ell}{\sqrt{\budget+1}},
    h(\bar x)
    =
    1-1-\frac{\ell}{\sqrt{\budget+1}}
    =
    -\frac{\ell}{\sqrt{\budget+1}}.
\)
Therefore
\(
    \tilde f^\star
    \le
    \tilde f(\bar x)
    =
    -\frac{\ell}{\sqrt{\budget+1}}.
\)
On the other hand, for every \(y\),
\(
    g(y)\ge \ell v_1^T y \ge -\ell\|y\|,
    h(y)=\|y\|-1-\frac{\ell}{\sqrt{\budget+1}},
\)
and hence
\[
    \tilde f(y)
    \ge
    \max\{-\ell\|y\|,\|y\|-1-\frac{\ell}{\sqrt{\budget+1}}\}.
\]
The right-hand side is coercive and its minimum over \(\|y\|\ge0\) equals
\(
    -\frac{\ell (1+\frac{\ell}{\sqrt{\budget+1}})}{1+\ell}.
\)
Therefore, we have
\(
    \tilde f^\star
    \ge
    -\ell
    \ge
    -1.
\)
Also, \(\tilde f(0)=0\), and therefore \(f(0)\le0\le1\). Since the Moreau
envelope has the same minimum value as \(\tilde f\), we get
\(
    -1
    \le
    f^\star
    \le
    f(0)
    \le
    1.
\)

The coercivity established above ensures that
\(\argmin\tilde f\neq\emptyset\). We now verify
\(\dist{\mathbf 0}{\argmin f}\leq1\). If \(\|y\|>1\), then
\[
    h(y)
    =
    \|y\|-1-\frac{\ell}{\sqrt{\budget+1}}
    >
    1-1-\frac{\ell}{\sqrt{\budget+1}}
    =
    -\frac{\ell}{\sqrt{\budget+1}}.
\]
Since \(\tilde f^\star\le-\ell/\sqrt{\budget+1}\), such a point \(y\) cannot be a
minimizer of \(\tilde f\). Hence
\(
    \argmin \tilde f\subseteq \ball 1.
\)
The Moreau envelope has the same minimizers as \(\tilde f\). Therefore
\(
    \argmin f=\argmin\tilde f\subseteq\ball 1.
\)
In particular, \(\dist{\mathbf 0}{\argmin f}\leq1\).
This proves the first claim.

For the second claim, we already know
\(
    f^\star
    =
    \tilde f^\star
    \le
    -\frac{\ell}{\sqrt{\budget+1}}.
\)
Now fix \(x\) such that
\(
    |v_{\budget+1}^T x|
    \le
    \frac{\ell}{\eta},
\)
and let
\(
    y^\star
    =
    \argmin_y
    \left\{
        \tilde f(y)+\frac{\eta}{2}\|y-x\|^2
    \right\}.
\)
By optimality,
\(
    \eta(x-y^\star)\in\partial\tilde f(y^\star).
\)
Since \(\tilde f=\max\{g,h\}\), there exist \(\beta,\gamma\ge0\) with
\(\beta+\gamma=1\), coefficients \(\alpha_r\ge0\) with
\(\sum_{r=1}^{\budget+1}\alpha_r=1\), and a vector \(q\in\partial\|y^\star\|\), such that
\(
    \eta(x-y^\star)
    =
    \beta\ell\sum_{r=1}^{\budget+1}\alpha_r v_r+\gamma q.
\)
Here \(\alpha_r=0\) unless the \(r\)-th affine piece is active in the
maximum defining \(g(y^\star)\); if \(\beta=0\), the choice of
\((\alpha_r)_{r=1}^{\budget+1}\) is arbitrary. Also, \(\gamma=0\) if the norm branch
is not active. If \(y^\star=0\), then \(v_{\budget+1}^T y^\star=0\). Otherwise,
\(q=y^\star/\|y^\star\|\), and taking inner product with \(v_{\budget+1}\) gives
\[
    \left(\eta+\frac{\gamma}{\|y^\star\|}\right)v_{\budget+1}^T y^\star
    =
    \eta v_{\budget+1}^T x-\beta\ell\alpha_{\budget+1}.
\]
Using \(|v_{\budget+1}^T x|\le\ell/\eta\), \(0\le\beta\le1,\alpha_{\budget+1}\le1\), and
\(\eta+\gamma/\|y^\star\|\ge\eta\), we obtain
\(
    v_{\budget+1}^T y^\star
    \ge
    -\frac{2\ell}{\eta}.
\)
Therefore
\[
    f(x)
    =
    \tilde f(y^\star)+\frac{\eta}{2}\|y^\star-x\|^2 
    \ge
    g(y^\star) 
    \ge
    \ell v_{\budget+1}^T y^\star
    -
    \frac{5\ell^2 \budget}{\eta} 
    \ge
    -\frac{5\ell^2 (\budget+1)}{\eta}.
\]
Combining this with \(f^\star\le-\ell/\sqrt{\budget+1}\), we get
\(
    f(x)-f^\star
    \ge
    \frac{\ell}{\sqrt{\budget+1}}
    -
    \frac{5\ell^2 (\budget+1)}{\eta}
    =
    \frac{\ell}{2\sqrt{\budget+1}},
\)
where we used \(\eta=10(\budget+1)^{3/2}\ell\). This proves the second claim.

For the third claim, suppose that
\(
    |\langle x,v_r\rangle|
    \le
    \frac{\ell}{\eta}\) for all $r\ge t.$
Let \(y^\star\) be the proximal point of \(x\). Using the same optimality
condition as above, for every \(r\ge t\), either \(y^\star=0\), in which case
\(\langle y^\star,v_r\rangle=0\), or
\(
    \left(\eta+\frac{\gamma}{\|y^\star\|}\right)
    \langle y^\star,v_r\rangle
    =
    \eta\langle x,v_r\rangle-\beta\ell\alpha_r.
\)
As in the second claim, we have
\(
    -\frac{2\ell}{\eta}
    \le
    \langle y^\star,v_r\rangle
    \le
    \frac{\ell}{\eta},
    r\ge t.
\)
In particular,
\(
    |\langle y^\star,v_r\rangle|
    \le
    \frac{2\ell}{\eta}\) for all $r\ge t.$
Thus, for any \(r>t\),
\[
    \ell v_r^T y^\star
    -
    \frac{5\ell^2(r-1)}{\eta}
    \le
    \frac{2\ell^2}{\eta}
    -
    \frac{5\ell^2(r-1)}{\eta},
\]
whereas
\[
    \ell v_t^T y^\star
    -
    \frac{5\ell^2(t-1)}{\eta}
    \ge
    -\frac{2\ell^2}{\eta}
    -
    \frac{5\ell^2(t-1)}{\eta}.
\]
For \(r>t\), the first upper bound is strictly smaller than the second
lower bound. Hence no index \(r>t\) can be active in the maximum defining
\(g(y^\star)\).

Define
\[
    g_t(y)
    :=
    \max_{1\le r\le t}
    \left\{
        \ell v_r^T y
        -
        \frac{5\ell^2(r-1)}{\eta}
    \right\},
    \qquad
    \tilde f_t(y):=\max\{g_t(y),h(y)\}.
\]
Since no index \(r>t\) is active in \(g(y^\star)\), the subgradient of \(g\)
appearing in the optimality condition belongs to \(\partial g_t(y^\star)\).
Therefore
\(
    0
    \in
    \partial\tilde f_t(y^\star)+\eta(y^\star-x),
\)
so \(y^\star\) is also the unique minimizer of
\(
    y\mapsto
    \tilde f_t(y)+\frac{\eta}{2}\|y-x\|^2.
\)
Moreover, \(\tilde f_t(y^\star)=\tilde f(y^\star)\). Hence
\[
    f(x)
    =
    \inf_y
    \left\{
        \tilde f_t(y)+\frac{\eta}{2}\|y-x\|^2
    \right\},
    \qquad
    \grad f(x)=\eta(x-y^\star).
\]
Both quantities can therefore be computed using only \(v_1,\ldots,v_t\).
This proves the third claim.

It remains to prove the large-query claim. Since
\(
    g(y)
    \le
    \ell\|y\|\) for all $y$,
the norm branch strictly dominates \(g\) whenever
\(
    \|y\|>\frac{1+\frac{\ell}{\sqrt{\budget+1}}}{1-\ell}.
\)
Let
\(
    R_\star
    :=
    \frac{1+\frac{\ell}{\sqrt{\budget+1}}}{1-\ell}
    +
    \frac{1}{\eta}.
\)
For both \(H=\infty\) and \(H=1\), one has \(R_\star<3\). Indeed, if
\(H=\infty\), then \(\ell=1/3\), \(1+\frac{\ell}{\sqrt{\budget+1}}\le4/3\), and
\(1/\eta\le3/10\), so \(R_\star\le2.3<3\). If \(H=1\), then
\(\ell\le1/10\), \(1+\frac{\ell}{\sqrt{\budget+1}}\le1.1\), and \(1/\eta=1\), so
\(
    R_\star
    \le
    \frac{1.1}{0.9}+1
    <3.
\)

Now suppose \(\|x\|>3\). Then \(\|x\|>R_\star\). Set
\(
    y
    =
    \left(1-\frac{1}{\eta\|x\|}\right)x.
\)
Then
\(
    \|y\|
    =
    \|x\|-\frac1\eta
    >
    \frac{1+\frac{\ell}{\sqrt{\budget+1}}}{1-\ell},
\)
so \(h(y)>g(y)\), and therefore
\(
    \partial\tilde f(y)=\partial h(y)=\left\{\frac{y}{\|y\|}\right\}.
\)
Moreover,
\(
    \eta(x-y)=\frac{x}{\|x\|}=\frac{y}{\|y\|}.
\)
Thus \(y\) is the proximal point of \(x\), and the oracle response is
\(
    \grad f(x)
    =
    \eta(x-y)
    =
    \frac{x}{\|x\|},
\)
while
\(
    f(x)
    =
    h(y)+\frac{\eta}{2}\|y-x\|^2
    =
    \|x\|-1-\frac{\ell}{\sqrt{\budget+1}}-\frac{1}{2\eta}.
\)
Both are independent of \(v_1,\ldots,v_{\budget+1}\). Finally, using
\(f^\star\le-\ell/\sqrt{\budget+1}\) and \(\|x\|>3>R_\star\), we have
\[
\begin{aligned}
    f(x)-f^\star
    &\ge
    \|x\|-1-\frac{\ell}{\sqrt{\budget+1}}-\frac{1}{2\eta}
    +
    \frac{\ell}{\sqrt{\budget+1}}  \\
    &>
    \frac{1+\frac{\ell}{\sqrt{\budget+1}}}{1-\ell}
    +
    \frac{1}{\eta}
    -
    1-\frac{\ell}{\sqrt{\budget+1}}
    -
    \frac{1}{2\eta}
    +
    \frac{\ell}{\sqrt{\budget+1}}  \\
    &=
    \frac{\ell\left(1+\frac{\ell}{\sqrt{\budget+1}}\right)}{1-\ell}
    +
    \frac{1}{2\eta}
    +
    \frac{\ell}{\sqrt{\budget+1}}
    \ge
    \frac{\ell}{2\sqrt{\budget+1}}.
\end{aligned}
\]
This proves the fourth claim.
\end{proof}

We then restate the vector-guessing lemma of \citet[Lemma~4]{woodworth2017lower}.

\begin{lemma}
\label{lem:keylowerboundlemma-delta}
Let \(k\in\NN\) with \(k\geq2\), \(\delta\in(0,\frac12]\), and
\(
0<c\leq\frac{1}{2\sqrt{k}}
\), \(V=\{v_1,\ldots,v_k\}\) be a uniformly random orthonormal set in
\(\hilbert{d}\), and \(x_1,\ldots,x_k\) be \(k\)
vectors, each of norm at most \(1\), satisfying the decomposition property
\[
    x_t(V)
    =
    x_t\!\left(
        V_{<t}\indicator{G'_{<t}}
        +
        V\indicator{\neg G'_{<t}}
    \right), \qquad t\le k
\]
where
\[
    G'_{<t}
    =
    \left\llbracket
    \forall r<t,\ \forall j\in\{r,\ldots,k\}:\ 
    |\langle x_r,v_j\rangle|\le \frac{c}{2}
    \right\rrbracket .
\]
If
\begin{equation}\label{choose-dim-in-vector-guessing}
d
\ge
\left\lceil
2k-1+\frac{40k}{c^2}
\log\left(\frac{k^2}{2\delta}\right)
\right\rceil,
\end{equation}
then
\[
\Pr\left(
\forall t\le k,\ \forall j\in\{t,\ldots,k\}:
|\langle x_t,v_j\rangle|\le \frac c2
\right)
\ge 1-2\delta .
\]
\end{lemma}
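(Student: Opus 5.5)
We will follow the structure of the proof of \citet[Lemma~4]{woodworth2017lower}: define the good events $G'_{<t}$ iteratively and bound the probability that $G'_{<t}$ holds while $G'_{<t+1}$ fails. The union bound over $t\le k$ and over $j\in\{t,\ldots,k\}$ involves at most $k^2/2$ pairs. We therefore aim to show that each single pair fails with probability at most $2\delta\cdot 2/k^2 = 4\delta/k^2$; summing then gives total failure probability at most $2\delta$.

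\textbf{Conditional symmetry.} Fix $t$ and condition on $G'_{<t}$. By the decomposition property, on this event $x_t$ is a measurable function of $V_{<t}=(v_1,\ldots,v_{t-1})$ only. We need a slightly subtle symmetry step, exactly as in Woodworth--Srebro. Conditionally on $V_{<t}$ and on the event $G'_{<t}$, the remaining vectors $v_t,\ldots,v_k$ are still distributed invariantly under rotations that fix $\mathrm{span}(V_{<t})$ and the earlier iterates $x_1,\ldots,x_{t-1}$. This holds because the event $G'_{<t}$ constrains the $v_j$ only through inner products with $x_1,\ldots,x_{t-1}$. Consequently, for $j\ge t$, the projection of $v_j$ onto the orthogonal complement $W$ of $\mathrm{span}(V_{<t},x_1,\ldots,x_{t-1})$, a subspace of dimension at least $d-2(t-1)\ge d-2k+2$, has a direction that is uniform on the sphere of $W$ and independent of $x_t$.

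\textbf{Bounding one pair.} Write $v_j=P v_j + P^\perp v_j$, where $P$ projects onto $\mathrm{span}(x_1,\ldots,x_{t-1})$ together with the $V_{<t}$ directions. Since $v_j\perp V_{<t}$, the component of $v_j$ along $\mathrm{span}(x_1,\ldots,x_{t-1})$ is controlled by the $G'_{<t}$ constraints $|\langle x_r,v_j\rangle|\le c/2$. A Gram--Schmidt bound gives $\|Pv_j\|\lesssim c\sqrt{k}\le 1/2$, where we use $c\le 1/(2\sqrt k)$. This is exactly where that hypothesis enters.

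For the remaining part, $|\langle x_t,P^\perp v_j\rangle|$ is at most $\|x_t\|\le 1$ times the inner product of a fixed unit vector with a uniform unit vector in dimension $d'\ge d-2k+1$. The standard spherical cap bound gives
\[
\Pr(|\langle u,\theta\rangle|>s)\le 2e^{-d's^2/2}.
\]
The contribution from $Pv_j$ must also be absorbed, by choosing thresholds so that the total stays below $c/2$. One way is to bound $\langle x_t,Pv_j\rangle$ using the fact that $x_t$'s own correlations also enter recursively; following Woodworth--Srebro, the cleaner route is to work with the residual of $x_t$ after projecting out the earlier iterates. Taking $s\approx c/(\text{const}\sqrt{k})$ makes the cap bound at most $4\delta/k^2$, provided
\[
d-2k+1\ge \frac{40k}{c^2}\log\!\Big(\frac{k^2}{2\delta}\Big),
\]
which is precisely \eqref{choose-dim-in-vector-guessing}.

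\textbf{Main obstacle.} The delicate step is the conditional-distribution argument together with the accumulation of the projected components. Conditioning on $G'_{<t}$ breaks full independence, so rotational invariance must be verified carefully for the restricted rotations that fix the earlier iterates. In addition, the $\sqrt{k}$ loss from $t-1$ earlier constraints must fit within the constant $40$. I would copy the Woodworth--Srebro calculation verbatim, replacing their constant success probability $1/2$ by the target $1-2\delta$. This only changes the logarithmic factor in the required dimension, from $\log(k^2)$-type constants to $\log(k^2/(2\delta))$.
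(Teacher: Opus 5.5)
Your high-level plan matches the paper's, which simply invokes the Woodworth--Srebro argument with the union bound re-budgeted to order $\delta/k^2$ per pair (the source of $\log(k^2/(2\delta))$). Your conditional-symmetry paragraph is also correct.

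The one step you actually spell out, ``Bounding one pair'', has a genuine gap.
\begin{itemize}
\item The constraints in $G'_{<t}$ do not control $\|Pv_j\|$. If the earlier iterates are nearly collinear, e.g.\ $x_1=e_1$ and $x_2=(e_1+\eta e_2)/\sqrt{1+\eta^2}$, then $|\langle x_1,v_j\rangle|,|\langle x_2,v_j\rangle|\le c/2$ still allow $|\langle e_2,v_j\rangle|$ of order $\min\{1,c/\eta\}$. A later iterate $x_3=e_2$ is then not controlled at all.
\item Even the bound $\|Pv_j\|\lesssim c\sqrt{k}$ you assert would only give $|\langle x_t,Pv_j\rangle|\lesssim c\sqrt{k}$, which exceeds the target $c/2$.
\item Passing to the residual $P^\perp x_t$ does not remove this, since $\langle Px_t,v_j\rangle=\langle x_t,Pv_j\rangle$ still needs $\|Pv_j\|=O(c)$.
\end{itemize}
That smallness comes from the randomness at earlier rounds, so your induction must carry it.

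As in the Woodworth--Srebro argument, the fix is to impose the good event on the Gram--Schmidt residual directions $\hat u_r=P_{r-1}^\perp x_r/\|P_{r-1}^\perp x_r\|$, where $P_{r-1}$ projects onto $\mathrm{span}(x_{<r},V_{<r})$. The threshold $s$ should be of order $c/\sqrt{k}$; for instance, $s=c/\sqrt{20k}$ turns $2e^{-(d-2k+1)s^2/2}\le 4\delta/k^2$ into exactly \eqref{choose-dim-in-vector-guessing}. You then show this stronger event implies $G'$. Indeed, $|\langle x_t,v_j\rangle|$ is at most the norm of the projection of $v_j$ onto $\mathrm{span}(x_{\le t},V_{<t})$, which is about $\sqrt{t}\,s<c/2$. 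Here $c\le 1/(2\sqrt{k})$ makes $s\sqrt{k}$ small, so the components introduced by orthogonalizing the $v_r$ are of lower order.

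Alternatively, your per-pair skeleton can be completed without conditioning on $G'_{<t}$ at all.
\begin{itemize}
\item Take the first $t$ at which a violation occurs. By the decomposition property,
$G'_{<t}\cap\{|\langle x_t,v_j\rangle|>c/2\}\subseteq\{|\langle \tilde x_t(V_{<t}),v_j\rangle|>c/2\}$, where $\tilde x_t$ is the map of $V_{<t}$ alone given by that property, clipped to the unit ball.
\item Given $V_{<t}$, each $v_j$ with $j\ge t$ is exactly uniform on the unit sphere of $\mathrm{span}(V_{<t})^\perp$, which has dimension $d-t+1$.
\item The cap bound with $s=c/2$ then bounds the per-pair failure probability by $2e^{-(d-k+1)c^2/8}\le 2(2\delta/k^2)^{5k}$ under \eqref{choose-dim-in-vector-guessing}.
\item Summing over the $k(k+1)/2$ pairs gives well under $2\delta$. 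Note this is slightly more than the $k^2/2$ pairs you budgeted for.
\end{itemize}
This route needs no Gram--Schmidt step and does not use $c\le 1/(2\sqrt{k})$.
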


\begin{proof}
This follows from the argument of
\citet[Equation~(26)]{woodworth2017lower}.
Although that proof uses
\(c=1/(2\sqrt{k})\), the same estimates remain valid for
\(0<c\leq1/(2\sqrt{k})\).
\end{proof}

\begin{lemma}[Randomized first-order lower bounds]
\label{lem:woodworth-nonasym-rand}
For every \(\budget\in\NN\) and every \(\delta\in(0,\frac12]\), the
following two statements hold.

\begin{enumerate}[i.]
    \item There is a probability measure
    \(\probmeasure_{\budget,\delta}^{\rm ns}\) on functions in
    \(\funcls^{\rm ns}_{d_{\budget,\delta}^{\rm ns}}\) such that,
    for every randomized first-order algorithm
    \(\alg\in\algclsrand^{(1)}\),
    \[
        \Pr_{\alg,\,f\sim\probmeasure_{\budget,\delta}^{\rm ns}}
        \left(
        f(\alg[f]_{\budget})-f^\star
        \ge
        \frac{1}{6\sqrt{2}\budget^{1/2}}
        \right)
        \ge
        1-2\delta ,
    \]
    \item There is a probability measure
    \(\probmeasure_{\budget,\delta}^{\rm sm}\) on functions in
    \(\funcls^{\rm sm}_{d_{\budget,\delta}^{\rm sm}}\) such that, for
    every randomized first-order algorithm
    \(\alg\in\algclsrand^{(1)}\),
    \[
        \Pr_{\alg,\,f\sim\probmeasure_{\budget,\delta}^{\rm sm}}
        \left(
        f(\alg[f]_{\budget})-f^\star
        \ge
        \frac{1}{80\budget^{2}}
        \right)
        \ge
        1-2\delta.
    \]
\end{enumerate}
\end{lemma}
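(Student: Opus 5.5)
The measure will be the law of the random function $f$ in \eqref{eq:hard-func-woodworth}, with $H=\infty$ for part i and $H=1$ for part ii, $V=(v_1,\ldots,v_{\budget+1})$ a uniformly random orthonormal set, and the dimension $d$ chosen by \eqref{choose-dim-in-vector-guessing} with $k=\budget+1$ and $c=2\ell/\eta$. Since $\eta=10(\budget+1)^{3/2}\ell$, we get $c=\frac{1}{5(\budget+1)^{3/2}}\le\frac{1}{2\sqrt{\budget+1}}$, so \Cref{lem:keylowerboundlemma-delta} applies. By \Cref{lem:hard-instance-unbounded-queries}(i) the measure is supported on $\funcls^{\rm ns}_d$ or $\funcls^{\rm sm}_d$. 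By part (ii) of that lemma, it suffices to show that with probability at least $1-2\delta$ the output $x^{(\budget+1)}$ either has $\|x^{(\budget+1)}\|>3$ or satisfies $|\langle x^{(\budget+1)},v_{\budget+1}\rangle|\le \ell/\eta=c/2$. In the first case part (iv) gives the same gap $\ell/(2\sqrt{\budget+1})$, and the numerical constants then follow from part (ii).

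To bring in the vector-guessing lemma, we condition on the seed $\xi$, which is independent of $V$, so the algorithm becomes deterministic. We then work with the $\budget+1$ points $x^{(1)},\ldots,x^{(\budget+1)}$, counting the output as the last query. The lemma needs vectors of norm at most $1$, so we define normalized points $\tilde x_t:=x^{(t)}/3$ if $\|x^{(t)}\|\le3$ and $\tilde x_t:=0$ otherwise. Every $\tilde x_t$ then has norm at most $1$. If $|\langle\tilde x_t,v_j\rangle|\le c/6$, then $|\langle x^{(t)},v_j\rangle|\le c/2=\ell/\eta$. To absorb this factor of $3$, we apply the lemma with $c'=c/3$; this only increases the required $d$ by a constant factor.

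The main step is the decomposition property. On the event $G'_{<t}$, every earlier query $x^{(r)}$ with $r<t$ either has norm greater than $3$, in which case part (iv) says the oracle answer does not depend on $V$, or satisfies $|\langle x^{(r)},v_j\rangle|\le\ell/\eta$ for all $j\ge r$, in which case part (iii) says the answer depends only on $v_1,\ldots,v_r$. We show by induction on $t$ that on $G'_{<t}$ the point $x^{(t)}$ is a measurable function of $(\xi,v_1,\ldots,v_{t-1})$ alone. This is exactly the required form $x_t(V_{<t}\indicator{G'_{<t}}+V\indicator{\neg G'_{<t}})$. The induction needs some care, because whether $G'_{<t}$ holds is itself determined by $V_{<t}$ along the consistent path. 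I expect this bookkeeping, together with the fact that $\tilde x_t$ is not linear in $x^{(t)}$, to be the main technical obstacle. It is handled by noting that $\tilde x_t$ is a measurable function of $x^{(t)}$ and so inherits the same dependence.

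The lemma then gives, conditional on $\xi$, probability at least $1-2\delta$ that $|\langle \tilde x_{\budget+1},v_{\budget+1}\rangle|\le c'/2$. Hence either $\|x^{(\budget+1)}\|>3$, or $|\langle x^{(\budget+1)},v_{\budget+1}\rangle|\le\ell/\eta$. Either way $f(x^{(\budget+1)})-f^\star\ge \ell/(2\sqrt{\budget+1})$. Integrating over $\xi$ completes both parts.
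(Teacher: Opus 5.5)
Your proposal is correct and follows the paper's proof essentially step by step: the same hard instance and law, the same surrogate points $x^{(t)}/3$ (or $\mathbf{0}$ when $\|x^{(t)}\|>3$), the same inductive verification of the decomposition property via properties (iii) and (iv) of \Cref{lem:hard-instance-unbounded-queries}, the same two-case conclusion for the output via (ii) and (iv), and conditioning on the seed. Your effective constant $c'=c/3=2\ell/(3\eta)=\frac{1}{15(\budget+1)^{3/2}}$ is exactly the paper's choice, so the dimension $d$ should be fixed by \eqref{choose-dim-in-vector-guessing} with $c'$ rather than with your initial $c=2\ell/\eta$.
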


\begin{proof}
Fix \(H\in\{1,\infty\}\), let
\[
    \ell_H
    =
    \begin{cases}
        \frac13, & H=\infty,\\[2mm]
        \frac{1}{10(\budget+1)^{3/2}}, & H=1,
    \end{cases}
    \qquad
    \eta_H
    =
    10(\budget+1)^{3/2}\ell_H .
\]
Then
\(
    \frac{\ell_H}{\eta_H}
    =
    \frac{1}{10(\budget+1)^{3/2}}.
\)
We will apply \Cref{lem:keylowerboundlemma-delta} to rescaled surrogate
points, so take
\(
    k=\budget+1,
    c
    =
    \frac{2\ell_H}{3\eta_H}
    =
    \frac{1}{15(\budget+1)^{3/2}}.
\)
Notice that
\(
c=\frac{1}{15k^{3/2}}
\leq
\frac{1}{2\sqrt{k}},
\)
so \Cref{lem:keylowerboundlemma-delta} applies. Choose an integer \(d=d_{\budget,\delta}\) satisfying \eqref{choose-dim-in-vector-guessing}.

Let \(v_1,\ldots,v_{\budget+1}\) be a uniformly random orthonormal set in \(\hilbert{d}\),
and let \(f=f_V\) be the hard instance defined in
\eqref{eq:hard-func-woodworth} with parameters \(\ell_H,\eta_H\). Denote
the induced law of \(f_V\) by \(\probmeasure_{\budget,\delta}^{H}\). By
\Cref{lem:hard-instance-unbounded-queries},
\(
    \probmeasure_{\budget,\delta}^{\infty}\)
is supported on 
    \(\funcls_{d}^{\rm ns},\)
\(\probmeasure_{\budget,\delta}^{1}\)
is supported on 
\(\funcls_{d}^{\rm sm}.
\)

We first consider a deterministic first-order algorithm. Let \(x_t\) be
its \(t\)-th query point for \(t=1,\ldots,\budget\), and set
\(
    x_{\budget+1}:=\alg[f]_{\budget}
\)
to be its final output. Define the bounded surrogate points
\[
    z_t
    :=
    \begin{cases}
        x_t/3, & \|x_t\|\le3,\\
        0, & \|x_t\|>3,
    \end{cases}
    \qquad
    t=1,\ldots,\budget+1.
\]
Then \(\|z_t\|\le1\) for every \(t\).

We claim that \(z_1,\ldots,z_{\budget+1}\) satisfy the decomposition condition in
\Cref{lem:keylowerboundlemma-delta}. Define
\[
    G'_{<t}
    :=
    \left\llbracket
    \forall s<t,\ \forall j\in\{s,\ldots,\budget+1\}:\ 
    |\langle z_s,v_j\rangle|
    \le
    \frac{c}{2}
    \right\rrbracket .
\]
On \(G'_{<t}\), consider any previous query \(x_s\), \(s<t\). If
\(\|x_s\|>3\), then the fourth property of
\Cref{lem:hard-instance-unbounded-queries} implies that the oracle
response at \(x_s\) is independent of \(v_1,\ldots,v_{\budget+1}\). If
\(\|x_s\|\le3\), then \(z_s=x_s/3\), and the event \(G'_{<t}\) gives
\(
    |\langle x_s,v_j\rangle|
    =
    3|\langle z_s,v_j\rangle|
    \le
    \frac{3c}{2}
    =
    \frac{\ell_H}{\eta_H}\)
for all \(j\in\{s,\ldots,\budget+1\}.\)

By the third property of \Cref{lem:hard-instance-unbounded-queries}, the
oracle response at such an \(x_s\) can be computed using only
\(v_1,\ldots,v_s\). Hence, by induction on \(t\), the transcript before
producing \(x_t\) depends only on \(v_1,\ldots,v_{t-1}\), and so \(z_t\)
also depends only on \(v_1,\ldots,v_{t-1}\) on \(G'_{<t}\). This is
precisely the decomposition condition.

Applying \Cref{lem:keylowerboundlemma-delta} to
\(z_1,\ldots,z_{\budget+1}\), with \(k=\budget+1\) and
\(c=2\ell_H/(3\eta_H)\), gives that, with probability at least
\(1-2\delta\) over the random draw of \(V\),
\(
    \forall t\le \budget+1,\ \forall j\in\{t,\ldots,\budget+1\}\), we have
   \( |\langle z_t,v_j\rangle|
    \le
    \frac{c}{2}.
\)
On this event, we prove the desired lower bound for the final output
\(x_{\budget+1}=\alg[f]_{\budget}\).

If \(\|x_{\budget+1}\|>3\), then the fourth property of
\Cref{lem:hard-instance-unbounded-queries} gives directly
\(
    f(x_{\budget+1})-f^\star
    \ge
    \frac{\ell_H}{2\sqrt{\budget+1}}.
\)
If \(\|x_{\budget+1}\|\le3\), then \(z_{\budget+1}=x_{\budget+1}/3\), and the vector-guessing event gives
\(
    |v_{\budget+1}^T x_{\budget+1}|
    =
    3|v_{\budget+1}^T z_{\budget+1}|
    \le
    \frac{3c}{2}
    =
    \frac{\ell_H}{\eta_H}.
\)
The second property of \Cref{lem:hard-instance-unbounded-queries} therefore
again yields
\(
    f(x_{\budget+1})-f^\star
    \ge
    \frac{\ell_H}{2\sqrt{\budget+1}}.
\)
Thus, for deterministic algorithms, with probability at least
\(1-2\delta\),
\(
    f(\alg[f]_{\budget})-f^\star
    \ge
    \frac{\ell_H}{2\sqrt{\budget+1}}.
\)

When \(H=\infty\), this gives
\(
    f(\alg[f]_{\budget})-f^\star
    \ge
    \frac{1}{6\sqrt{\budget+1}}
    \ge
    \frac{1}{6\sqrt2}\,\budget^{-1/2}.
\)
When \(H=1\), this gives
\(
    f(\alg[f]_{\budget})-f^\star
    \ge
    \frac{1}{20(\budget+1)^2}
    \ge
    \frac{1}{80}\,\budget^{-2}.
\)

Now let \(\alg\) be randomized. Conditioning on its random seed \(\xi\)
turns \(\alg\) into a deterministic first-order algorithm. The preceding
argument applies for every fixed \(\xi\). Integrating over \(\xi\) gives
the same lower bound with respect to the joint randomness of the algorithm
and the random draw of \(f\).
\end{proof}
Applying \Cref{thm.lower-bound-fixed-function-convex-rand} with
\Cref{lem:convex-constrained-error-metric,lem:woodworth-nonasym-rand}
proves both statements of \Cref{coro:convex-fixed-rand}.

\section{Supplement to \texorpdfstring{\Cref{sec.lower bound}}{Section \ref*{sec.lower bound}}}
\subsection{Useful lemma on the hardness of estimation}\label{sec:hardness-of-estimation}

The developments in this section use the standard notation
$$
I(X ; Y)=H(X)-H(X \mid Y)=H(Y)-H(Y \mid X)
$$
for mutual information $I(X ; Y)$, entropy $H(X)=\mathbb{E}_{x \sim X} \log {\frac{1}{\pr(X=x)}}$ and conditional entropy $H(X \mid Y)=\mathbb{E}_{x, y \sim X, Y} \log \frac{1}{\pr(X=x \mid Y=y)}$. We also write
$$
h_2(q):=H(\mathsf {Bernoulli}(q))=q \log \frac{1}{q}+(1-q) \log \frac{1}{1-q}
$$
for the binary entropy function. Our bounds hinge on the following classical result.
\begin{fact}\label{fact.fano}
(Fano's inequality~\cite{fano1961transmission}). Let $V$ be a random variable taking $n$ values, and let $\hat{V}$ be some estimator of $V$. Then
$$
h_2(\pr(V \neq \hat{V}))+\pr(V \neq \hat{V}) \log (n-1) \geq H(V \mid \hat{V}) .
$$
\end{fact}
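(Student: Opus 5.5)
We follow the classical entropy chain-rule argument. Throughout, we take $\hat V$ to be valued in the same $n$-element set as $V$. This is the setting in which Fano's inequality is used in this paper: in the proof of \Cref{thm.lower bound}, $\hat V_i\in\{-1,1\}$ estimates a Rademacher variable. The plan has three steps.

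\textbf{Step 1: introduce the error indicator.} Let $E:=\mathbf{1}\{V\neq\hat V\}$ and $p:=\pr(V\neq\hat V)=\pr(E=1)$. We compute the joint conditional entropy $H(E,V\mid\hat V)$ in two ways using the chain rule.

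\textbf{Step 2: first expansion.} Expanding in the order $V$ then $E$ gives
\[
H(E,V\mid\hat V)=H(V\mid\hat V)+H(E\mid V,\hat V)=H(V\mid\hat V).
\]
The last equality holds because $E$ is a deterministic function of $(V,\hat V)$, so $H(E\mid V,\hat V)=0$.

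\textbf{Step 3: second expansion.} Expanding in the order $E$ then $V$ gives
\[
H(E,V\mid\hat V)=H(E\mid\hat V)+H(V\mid E,\hat V).
\]
We bound the two terms separately.
\begin{itemize}
\item Conditioning cannot increase entropy, so $H(E\mid\hat V)\le H(E)=h_2(p)$.
\item Split $H(V\mid E,\hat V)$ according to the value of $E$:
\[
H(V\mid E,\hat V)=(1-p)\,H(V\mid E=0,\hat V)+p\,H(V\mid E=1,\hat V).
\]
On $\{E=0\}$ we have $V=\hat V$, so the first conditional entropy is $0$. On $\{E=1\}$, given $\hat V=\hat v$, the variable $V$ ranges over the $n-1$ values different from $\hat v$. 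The entropy of a variable supported on at most $n-1$ points is at most $\log(n-1)$, so the second conditional entropy is at most $\log(n-1)$.
\end{itemize}
Combining the two expansions yields
\[
H(V\mid\hat V)\le h_2(p)+p\log(n-1),
\]
which is the claimed inequality.

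The only delicate point is Step 3: the bound $\log(n-1)$ for the $\{E=1\}$ term requires that, given $\hat V$ and an error, $V$ has at most $n-1$ possible values. This is exactly where $\hat V$ must take values in the range of $V$. If a more general estimator were allowed, we would first replace it by a range-valued measurable modification. One option is to map out-of-range outputs to a fixed element of the range, which does not increase the error probability; one then uses that $h_2(q)+q\log(n-1)$ is nondecreasing on $[0,(n-1)/n]$. For the application with $n=2$ this issue does not arise. The remaining steps are routine measure-theoretic bookkeeping, since all variables are discrete.
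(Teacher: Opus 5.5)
Your main argument is correct. It is the standard chain-rule proof of Fano's inequality (the one in Cover and Thomas). The paper gives no proof of this fact and simply cites Fano, so there is no competing route to compare. You are also right that the $\log(n-1)$ term requires $\hat V$ to take values in the range of $V$. This holds where the paper uses the fact, since $\hat V_i\in\{-1,1\}$ there.

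Your closing remark about general estimators, however, should be dropped. Mapping out-of-range outputs to a fixed element gives $\tilde V$ with $\tilde p:=\pr(V\neq\tilde V)\le p$, and then $H(V\mid\hat V)\le H(V\mid\tilde V)\le h_2(\tilde p)+\tilde p\log(n-1)$. The last step, from $\tilde p$ back to $p$, uses monotonicity of $q\mapsto h_2(q)+q\log(n-1)$. That monotonicity holds only on $[0,(n-1)/n]$, and nothing forces $p$ into that interval. So your reduction works when $p\le(n-1)/n$ but not in general.

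No repair is possible. Take $n=2$, $V$ Rademacher and $\hat V\equiv 0$. The left-hand side is $h_2(1)+\log 1=0$, while $H(V\mid\hat V)=\log 2$. So range-valuedness is a genuine hypothesis of the inequality as stated; for arbitrary estimators the correct bound has $\log n$ in place of $\log(n-1)$. Your proof should present it as a required assumption, not a removable convenience.
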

First, we introduce the following lemma, which clarifies the hardness of sharpening a biased Bernoulli prior~(adapted from \citet[Lemma~2]{carmon2024price}). In the following, all three-valued discrete random variables have three possible values $\{-1,0,1\}$.

For brevity, we will denote $S\ind{1}, S\ind{2}, \dots, S\ind{\budget}$ by $S\ind{1:\budget}$ and $Q\ind{1}, Q\ind{2}, \dots, Q\ind{\budget}$ by $Q\ind{1:\budget}$.

% log is natural log here
\begin{lemma}\label{lem:V-information-theory}
For any $\budget \in \mathbb{N}$, $\epsilon \in (0,1)$, $w\in (0,1)$ let $V$ be a Rademacher random variable, $S\ind{1:\budget} \simIID \threval(w,\epsilon, V)$ and $Q\ind{1:\budget}$ be a sequence of random variables. 
Suppose that
 \begin{equation}\label{eq.upper bound of epsilon}
\epsilon \le \sqrt{\frac{h_2(1/2) - h_2(1/3)}{w\budget}}.
\end{equation}
Then, for every randomized estimator $\hat{V}$ such that $V \independent \hat{V} \mid S\ind{1:\budget}, Q\ind{1:\budget}$, we have
\[
\pr(\hat{V} \neq V) \geq \frac{1}{3} - \sum_{t=1}^{\budget} \pr(Q\ind{t} \neq 0).
\]
\end{lemma}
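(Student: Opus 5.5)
Ideally we would apply Fano's inequality (\Cref{fact.fano}) with $n=2$: $h_2(\pr(V\neq\hat V)) \ge H(V\mid \hat V) \ge H(V\mid S\ind{1:\budget},Q\ind{1:\budget}) = \log 2 - I(V;S\ind{1:\budget},Q\ind{1:\budget})$, where the middle inequality is data processing, valid because $V \independent \hat V \mid S\ind{1:\budget},Q\ind{1:\budget}$. The difficulty is that the statement imposes no structure on $Q\ind{1:\budget}$, so $Q$ may carry unlimited information about $V$ (in our application $Q_i\ind{t}=V_i$ whenever it is nonzero). The plan is therefore to handle $Q$ through a coupling/event argument rather than through mutual information.

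Let $\mathcal{Z} := \llbracket Q\ind{t}=0\ \forall t\le \budget\rrbracket$, so $\pr(\mathcal{Z}^c)\le \sum_{t}\pr(Q\ind{t}\neq 0)$ by a union bound. Define the modified estimator $\tilde V$, which applies the same randomized rule as $\hat V$ but with the all-zero sequence substituted for $Q\ind{1:\budget}$. Then $\tilde V$ is a randomized function of $S\ind{1:\budget}$ alone (plus independent randomness), so $V\independent \tilde V\mid S\ind{1:\budget}$. On $\mathcal{Z}$ the conditional law of $\hat V$ given $(S,Q)$ coincides with that of $\tilde V$, so we can couple them to be equal on $\mathcal{Z}$. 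Realizing $\hat V = g(S,Q,U)$ with $U$ independent uniform and setting $\tilde V = g(S,0,U)$ gives $\pr(\hat V\neq V)\ge \pr(\tilde V\neq V) - \pr(\mathcal{Z}^c)$. It then remains to show $\pr(\tilde V\neq V)\ge 1/3$.

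For this, apply Fano to $\tilde V$: $h_2(\pr(\tilde V\neq V))\ge \log 2 - I(V;S\ind{1:\budget}) \ge h_2(1/2) - \budget\, I(V;S\ind{1})$ by the chain rule and conditional independence of the i.i.d.\ samples given $V$. The key computation bounds the single-sample information: $S=0$ with probability $1-w$ regardless of $V$, and otherwise $S$ is a $\pm1$ coin with bias $\epsilon V$. Hence $I(V;S) = w\,(\log 2 - h_2((1+\epsilon)/2)) \le w\epsilon^2$, using $\log 2 - h_2((1+\epsilon)/2)\le \epsilon^2$ (e.g.\ via KL $\le\chi^2$: the KL between $\mathsf{Bernoulli}((1\pm\epsilon)/2)$ and $\mathsf{Bernoulli}(1/2)$ is at most $\epsilon^2$). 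Under \eqref{eq.upper bound of epsilon}, $\budget w\epsilon^2\le h_2(1/2)-h_2(1/3)$, so $h_2(\pr(\tilde V\neq V))\ge h_2(1/3)$. Since $h_2$ is increasing on $[0,1/2]$, this gives $\pr(\tilde V\neq V)\ge 1/3$ whenever the error probability is at most $1/2$. If it exceeds $1/2$, the bound $\pr(\tilde V\neq V)\ge 1/3$ holds trivially.

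The main obstacle is making the coupling step between $\hat V$ and $\tilde V$ on $\mathcal{Z}$ rigorous, given that $Q$ is arbitrary and may depend on $V$. The measurable-map realization described above handles this cleanly. The mutual-information constant is the only other place requiring care, and the factor $w$ in $I(V;S)$ is exactly what the hypothesis \eqref{eq.upper bound of epsilon} is built to absorb.
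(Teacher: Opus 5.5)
Your proposal is correct and follows essentially the same route as the paper. Like the paper, you set $\hat W(S\ind{1:\budget},U):=\hat V(S\ind{1:\budget},\mathbf{0},U)$ and pay $\sum_t\pr(Q\ind{t}\neq0)$ via a union bound. You then bound $I(V;S\ind{1:\budget})\le \budget w\epsilon^2$ by data processing, the chain rule, and $\log(1+x)\le x$ (which is equivalent to your KL $\le\chi^2$ step) before applying Fano with two values. Your explicit handling of the case $\pr(\tilde V\neq V)>1/2$ is a small tidy-up of the paper's final monotonicity step.
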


\begin{proof}
Since $\hat{V}$ is a randomized estimator, it is a function of the form
$\hat{V}(S\ind{1:\budget},Q\ind{1:\budget},U)$
where $U$ is a uniform random variable.
We will analyze the randomized estimator:
\[
\hat{W}(S\ind{1:\budget}, U) := \hat{V}(S\ind{1:\budget}, \mathbf{0}, U)
\]
which only depends on $S\ind{1:\budget}$ instead of $Q\ind{1:\budget}$. 
This makes $\hat{W}$ easier to analyze than $\hat{V}$.
Moreover, by a union bound,
\begin{flalign*}
\pr(\hat{V} \neq V) &\ge \pr(\hat{W} \neq V, \hat{W} = \hat{V})
\ge \pr(\hat{W} \neq V) - \pr(\hat{W} \neq \hat{V}) \\
&\ge \pr(\hat{W} \neq V) - \pr(Q\ind{1:\budget} \neq \mathbf{0}) \ge \pr(\hat{W} \neq V) - \sum_{t=1}^\budget \pr(Q\ind{t} \neq 0).
\end{flalign*}
The remainder of the proof shows $\pr(\hat{W} \neq V) \ge 1/3$. In particular, we will show $I(V ; \hat{W}) \leq w\epsilon^2 \budget$ and then use Fano's inequality to provide a bound on the probability.
By the data processing inequality~\cite[Theorem 2.8.1]{CoverThomas1991} and the chain rule for mutual information~\cite[Theorem 2.5.2]{CoverThomas1991},    
\begin{equation}
    \label{eq.upper bound mutual info}
    I(\hat{W} ; V) \leq I\left(S\ind{1:\budget} ; V\right)=\sum_{t=1}^\budget\left[H\left(S\ind{t} \mid S\ind{1:t-1}\right)-H\left(S\ind{t} \mid S\ind{1:t-1}, V\right)\right].
\end{equation}
For $H\left(S\ind{t} \mid S\ind{1:t-1}\right)$, using the facts that conditioning decreases entropy, and that $\pr(S\ind{t} = \pm 1) = \frac{w}{2}$ and $\pr(S\ind{t} =0) = 1-w$ we get
\begin{flalign}\label{bound:entropy-conditional-on-S}
H(S\ind{t} \mid S\ind{1:t-1} ) \le H(S\ind{t}) = (1 - w) \log \frac{1}{1-w} + w \log \frac{2}{w}.
\end{flalign}
Similarly, we have
\begin{equation}\label{bound:entropy-conditional-on-S-and-V}
    \begin{aligned}
    &H\left(S\ind{t} \mid S\ind{1:t-1}, V\right) = H(S\ind{t} \mid V)  \\
    &= \E\left[ (1 - w) \log \frac{1}{1-w} + \frac{w (1-\epsilon V)}{2} \log \frac{2}{w (1-\epsilon V)}  + \frac{w (1+\epsilon V)}{2} \log \frac{2}{w (1+\epsilon V)} \right]  \\
    &= (1 - w) \log \frac{1}{1-w} +  \frac{w (1-\epsilon)}{2}  \log \frac{2}{w (1-\epsilon)} + \frac{w (1+\epsilon)}{2} \log \frac{2}{w (1+\epsilon)}.  
    \end{aligned}
\end{equation}
Thus, substituting these two inequalities \eqref{bound:entropy-conditional-on-S} and \eqref{bound:entropy-conditional-on-S-and-V} into \eqref{eq.upper bound mutual info} and using $\log(1+x) \le x$ gives
\begin{flalign*}
I(\hat{W} ; V) &\le \budget w \left( \log \frac{2}{w} - \frac{(1-\epsilon)}{2}  \log \frac{2}{w (1-\epsilon)} - \frac{(1+\epsilon)}{2} \log \frac{2}{w (1+\epsilon)} \right) \\
&= \budget w \left( \frac{(1-\epsilon)}{2} \log(1-\epsilon) + \frac{(1+\epsilon)}{2} \log(1+\epsilon) \right) \le \budget w \left(  \frac{(\epsilon - 1) \epsilon}{2} + \frac{(1+\epsilon)\epsilon}{2} \right) \\
&= \budget w \epsilon^2.
\end{flalign*}
It remains to use our bound on the mutual information to bound
 $q := \pr(V \neq \hat{W})$.
Applying Fano's inequality (\Cref{fact.fano}) with $n=2$ gives 
\[
h_2(q) \geq H(V \mid \hat{W}) = H(V) - I(\hat{W} ; V) = h_2(1/2) - \budget w \epsilon^2 \geq \log(2) - \budget w \epsilon^2.
\]
Applying \eqref{eq.upper bound of epsilon} gives 
\[
h_2(q) \ge \log(2) - \budget w \epsilon^2 \ge h_2(1/3)
\]
Since $h_2(q)$ is increasing on $[0,1/2)$ it follows that $q \ge 1/3$.
\end{proof}

\bibliographystyle{abbrvnat}
\bibliography{references}
\end{document}